\begin{document}

\title{Parabolic subgroups of large-type Artin groups}
%	Artin groups of large type: systolicity and parabolic subgroups. }
\date{}
\author{Mar\'{i}a Cumplido, Alexandre Martin, and Nicolas Vaskou}
%%%%

%%%%%%

\maketitle
% ------- Theorem styles -------
\theoremstyle{plain}
\newtheorem{theorem}{Theorem}

\newtheorem{theoremalph}{Theorem}
\renewcommand*{\thetheoremalph}{\Alph{theoremalph}}

\newtheorem{corollaryalph}{Corollary}
\renewcommand*{\thecorollaryalph}{\Alph{corollaryalph}}

\newaliascnt{lemma}{theorem}
\newtheorem{lemma}[lemma]{Lemma}
\aliascntresetthe{lemma}
\providecommand*{\lemmaautorefname}{Lemma}

\newaliascnt{proposition}{theorem}
\newtheorem{proposition}[proposition]{Proposition}
\aliascntresetthe{proposition}
\providecommand*{\propositionautorefname}{Proposition}

\newaliascnt{corollary}{theorem}
\newtheorem{corollary}[corollary]{Corollary}
\aliascntresetthe{corollary}
\providecommand*{\corollaryautorefname}{Corollary}

\newaliascnt{conjecture}{theorem}
\newtheorem{conjecture}[conjecture]{Conjecture}
\aliascntresetthe{conjecture}
\providecommand*{\conjectureautorefname}{Conjecture}

\newtheorem*{question*}{Question}

\theoremstyle{remark}

\newaliascnt{claim}{theorem}
\newtheorem{claim}[claim]{Claim}

\newaliascnt{notation}{theorem}
\newtheorem{notation}[notation]{Notation}
\aliascntresetthe{notation}
\providecommand*{\notationautorefname}{Notation}

\aliascntresetthe{claim}
\providecommand*{\claimautorefname}{Claim}

\newtheorem*{claim*}{Claim}
\theoremstyle{definition}

\newaliascnt{definition}{theorem}
\newtheorem{definition}[definition]{Definition}
\aliascntresetthe{definition}
\providecommand*{\definitionautorefname}{Definition}

\newaliascnt{remark}{theorem}
\newtheorem{remark}[remark]{Remark}
\aliascntresetthe{remark}
\providecommand*{\exampleautorefname}{Remark}

\newaliascnt{example}{theorem}
\newtheorem{example}[example]{Example}
\aliascntresetthe{example}
\providecommand*{\exampleautorefname}{Example}

%\newaliascnt{question}{theorem}
%\newtheorem{question}[question]{Question}
%\aliascntresetthe{Question}
%\providecommand*{\conjectureautorefname}{Question}

\def\autorefspace{\hspace*{-0.5pt}}
\def\sectionautorefname{Section\autorefspace}
\def\subsectionautorefname{Section\autorefspace}
\def\subsubsectionautorefname{Section\autorefspace}
\def\figureautorefname{Figure\autorefspace}
\def\subfigureautorefname{Figure\autorefspace}
\def\tableautorefname{Table\autorefspace}
\def\equationautorefname{Equation\autorefspace}
\def\Itemautorefname{item\autorefspace}
\def\Hfootnoteautorefname{footnote\autorefspace}
\def\AMSautorefname{Equation\autorefspace}

\newcommand{\quotient}[2]{{\raisebox{.2em}{$#1$}\left/\raisebox{-.2em}{$#2$}\right.}}
\def\Stab{{\rm Stab}}
\def\Fix{{\rm Fix}}

\newcommand{\heyMaria}[1]{{\color{RedOrange} (#1)}}
\newcommand{\heyNicolas}[1]{{\color{Purple} (#1)}}
\newcommand{\heyAlex}[1]{{\color{OliveGreen} (#1)}}

\newcommand{\myref}[2]{\hyperref[#1]{#2~\ref*{#1}}}

\begin{abstract}
We show that the geometric realisation of the poset of proper parabolic subgroups of a large-type Artin group has a systolic geometry. We use this geometry  to show that the set of parabolic subgroups of a large-type Artin group is stable under arbitrary intersections and forms a lattice for the inclusion. As an application, we show that parabolic subgroups of large-type Artin groups are stable under taking roots and we completely characterise the parabolic subgroups that are conjugacy stable. 

We also use this geometric perspective to recover and unify results describing the normalisers of parabolic subgroups of large-type Artin groups. 
%
%Our approach is  geometric and relies on the action of a large-type Artin group on a new complex built out of its parabolic subgroups, and which turns out to be systolic.

\medskip

{\footnotesize
\noindent \emph{2020 Mathematics Subject Classification.} 20F65, 20F36.

\noindent \emph{Key words.}} Artin groups; parabolic subgroups; systolic complexes.

\end{abstract}

\section{Introduction}
Artin groups are a class of groups strongly related to Coxeter groups, and defined as follows: Let $S$ be a finite set, and for every distinct $s, t \in S$,  choose an integer $m_{st} \in \{2, 3, \ldots,\infty \}$. The associated \textbf{Artin group} is given by the following presentation: 
$$A_S \coloneqq \langle S ~|~ \underbrace{sts\cdots}_{m_{st}} = \underbrace{tst\cdots}_{m_{st}}  \mbox{ when } m_{st} \neq \infty\rangle.$$
If we add the relations $s^2 = 1$ for all $s \in S$, we obtain the associated \textbf{Coxeter group} $W_S$. Every Artin group $A_S$ has an associated \textbf{Coxeter graph}~$\Gamma_S$ defined as follows:

\begin{itemize}

\item The set of vertices of $\Gamma_S$ is $S$.

\item There is an edge connecting $s$ and $t$ if and only if $m_{s,t}\neq \infty$. This edge is labelled with~$m_{s,t}$.

\end{itemize}

%Also, if $\Gamma_S$ is connected, we say that $A_S$ is \textbf{irreducible}. \heyAlex{Not the correct definition! Is that needed anyway?}

\medskip

Many  questions remain open for general Artin groups, such as whether they are torsion-free, whether they have a soluble word problem, or whether the satisfy the celebrated $K(\pi,1)$-conjecture. However, several classes of Artin groups are better understood, for instance: \textbf{right-angled} Artin groups ($m_{st}=2$ or $\infty$ for all $s,t \in S$),   Artin groups \textbf{of spherical type} (such that the associated Coxeter group $W_S$ is finite)  and Artin groups of \textbf{large type} ($m_{ab} \geq 3$ for all $a, b \in S$).

\medskip

The goal of this paper is to investigate the structure of certain subgroups of large-type Artin groups. For a general Artin group $A_S$ with generating set $S$, it is a theorem of  \cite{Vanderlek} that the subgroup generated by a subset $S'\subset S$ is isomorphic to the Artin group~$A_{S'}$. The various subgroups $A_{S'}$, for subsets~$S'$ of~$S$, are called the \textbf{standard parabolic subgroups} of~$A_S$, and their conjugates are the \textbf{parabolic subgroups} of~$A_S$. A parabolic subgroup conjugated to a standard parabolic subgroup~$A_{S'}$ will be said to be of \textbf{type~$\boldsymbol{S'}$}. An Artin group that does not decompose as the direct product of two of its standard parabolic subgroups is called \textbf{irreducible}. Since a parabolic subgroup can naturally be viewed as an Artin group by the above, one defines similarly the notion of \textbf{irreducible} parabolic subgroup.

Parabolic subgroups form a natural class of subgroups that has been playing an increasing role in the geometric study of Artin groups in recent years. Indeed, several complexes have been associated to Artin groups using the combinatorics of parabolic subgroups. For instance, Deligne complexes and their variants are built out of (cosets of) standard parabolic subgroups of spherical type \citep{CharneyDavis}, and have been used to study various aspects of Artin groups: $K(\pi, 1)$-conjecture \citep{CharneyDavis,ParisSurvey}, acylindrical hyperbolicity \citep{MP1, Vaskou1, CMW}, Tits alternative \citep{MP2}, etc. More recently, using the connections between braid groups and mapping class groups, the irreducible parabolic subgroups have been used to define a possible analogue of the complex of curves for Artin groups of spherical type \citep{CGGW,Rose}. The geometry of this complex is currently being intensively studied. 

\medskip

The combinatorics of the set of parabolic subgroups of Coxeter groups are well understood. For instance, it is known that  the intersection of any subset of parabolic subgroups  of a Coxeter group is itself a parabolic subgroup \citep{Qi}. This implies in particular that the set of parabolic subgroups is a lattice for the inclusion. By contrast, the analogous problem is open  for general Artin groups: 

\begin{question*}
	Let $A_S$ be a general Artin group. Is the set of parabolic subgroups stable under arbitrary intersections?
		%\item \heyAlex{I propose we remove this question, as well as the section on Thm B below, since these results follow from Godelle for large-type Artin groups.That question can be added as a remark after the proof of this theorem} Is an inclusion of parabolic subgroups always conjugated to an inclusion of standard parabolic subgroups?
\end{question*}

The answer to this question is known for braid groups: A braid group can be seen as the mapping class group of an $n$-punctured disc $\mathcal{D}_n$. In this situation, parabolic subgroups are in bijection with isotopy classes of non-degenerated simple closed multicurves, each of them defining a disjoint union of (at least $2$-punctured) discs in  $\mathcal{D}_n$. An intersection between these families of discs can be defined (see {\citet[Section~1]{FM}} to get an idea of the construction). This corresponds to the intersection of parabolic subgroups of the braid group and gives us an affirmative answer to our question. This answer was recently generalised to all Artin groups of spherical type by \cite{CGGW} using Garside theory. For so-called Artin groups \textbf{of type FC}, it was shown that the intersection of two parabolic subgroups \textbf{of spherical type} is again a parabolic subgroup of spherical type \citep{Rose}. However, the case of general parabolic subgroups remains open.

\medskip

Besides being interesting in their own right, such results about the poset of parabolic subgroups  can be valuable tools in studying the structure of Artin groups: For instance, the analogue of \myref{Theorem:A}{Theorem} for Artin groups of spherical type was a key ingredient in the proof that Artin groups of type FC satisfy the Tits alternative \citep{MP2} . 

\bigskip

 In this paper, we solve this problem for Artin groups of large-type:

%\begin{theoremalph}
%	The intersection of two parabolic subgroups of an Artin group of large-type is itself a parabolic subgroup.
%\end{theoremalph}

\begin{theoremalph}\label{Theorem:A}
	Let $A_S$ be a large-type Artin group. Then the intersection of an arbitrary  subset of parabolic subgroups of an $A_S$ is itself a parabolic subgroup.  
	Moreover, the set of parabolic subgroups of $A_S$ is a lattice for the inclusion.
\end{theoremalph}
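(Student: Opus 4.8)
The plan is to build a combinatorial/geometric object---the poset $\mathcal{P}$ of proper parabolic subgroups of $A_S$, ordered by inclusion---and to show that its geometric realisation $|\mathcal{P}|$ carries a systolic geometry, as announced in the abstract. Systolic complexes are simply connected, and more importantly they are locally conical in a way that makes intersections behave well; the key structural fact I would want is that in a systolic complex the intersection of a family of ``convex'' subcomplexes (here the subcomplexes corresponding to the parabolic subgroups below a given parabolic, i.e. the closed stars or links) is again convex and in particular nonempty-or-controlled. So the first block of work is: (i) verify that a subset $S' \subseteq S$ generates the Artin group $A_{S'}$ (cited from Van der Lek) and that parabolics of $A_{S'}$ are parabolics of $A_S$, so $\mathcal{P}$ is closed under ``taking parabolics of parabolics''; (ii) establish that $\mathcal{P}$ is a flag simplicial complex with the right local combinatorics (the link of a vertex $A_{S'}$ should itself look like the poset of parabolics of $A_{S'}$ and of the ``complementary'' directions), and (iii) prove the systolic inequality: every embedded cycle of length $4$ or $5$ in the $1$-skeleton of $\mathcal{P}$ has a diagonal. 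This last point is where the large-type hypothesis $m_{ab}\geq 3$ is essential, since it forces dihedral Artin subgroups $\langle a,b\rangle$ to be well-behaved (one-ended, with controlled centralisers), which is what rules out the ``square'' obstructions that appear for, say, right-angled Artin groups.

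With systolicity of $|\mathcal{P}|$ in hand, I would deduce \autoref{Theorem:A} in two steps. First, \emph{intersection of two parabolics is a parabolic}: given parabolics $P_1 = g_1 A_{S_1} g_1^{-1}$ and $P_2 = g_2 A_{S_2} g_2^{-1}$, one looks at the corresponding vertices $v_1, v_2$ in $|\mathcal{P}|$ (or, if one of them is all of $A_S$ or trivial, the statement is immediate). In a systolic complex, $v_1$ and $v_2$ are joined by a geodesic, and one shows the parabolic ``$P_1 \cap P_2$'' is realised by a vertex along (or naturally associated to) that geodesic---concretely, by a minimal parabolic containing the ``projection'' of one onto the other, using that systolic complexes have well-defined combinatorial projections onto convex subcomplexes. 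The output is that $P_1\cap P_2$ lies in $\mathcal{P}$ (or equals $A_S$, or is trivial), hence is a parabolic. Second, \emph{arbitrary intersections}: here I would use a Noetherian/descending-chain argument. The set of parabolic subgroups containing a fixed finitely generated subgroup, or the set of parabolics of type contained in a fixed $S'$, has no infinite strictly descending chains---because the ``type'' is a subset of the finite set $S$ and along a descending chain of parabolics of the same type one can extract, using the systolic geometry (convexity of fixed-point sets, or a translation-length argument), that the chain must stabilise. Therefore an arbitrary intersection $\bigcap_{i\in I} P_i$ equals a finite subintersection $P_{i_1}\cap\cdots\cap P_{i_n}$, which is a parabolic by the two-parabolic case and induction.

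Finally, for the lattice statement: once $\mathcal{P} \cup \{A_S\}$ (with the trivial group adjoined if it is not already a parabolic---note $\{1\} = A_\emptyset$, so it is) is closed under arbitrary intersection and has a top element $A_S$, it is automatically a complete lattice: the meet of a family is its intersection, and the join of a family $\{P_i\}$ is defined as the intersection of all parabolics containing every $P_i$ (this family is nonempty since $A_S$ is in it, and its intersection is again a parabolic by the intersection theorem). So the lattice property is a formal consequence of the intersection property plus the existence of a maximum; I would state this as a short lemma.

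The main obstacle I anticipate is step (iii) above: proving the systolic (``no empty $4$- or $5$-cycles'') condition for $|\mathcal{P}|$. This requires a genuine understanding of when two parabolic subgroups ``fit into a common ambient parabolic'' and forces one to analyse pairs and triples of parabolics sharing generators, where the dihedral Artin groups $\langle a, b\rangle$ with $m_{ab}\geq 3$ and their intersections/centralisers must be controlled explicitly. A secondary difficulty is making precise the ``projection along a geodesic gives the intersection'' claim: one must identify, for each vertex $v$ on a systolic geodesic from $v_1$ to $v_2$, exactly which parabolic it represents and check that the intersection $P_1 \cap P_2$ is the parabolic associated to the appropriate extremal such vertex, which is likely where most of the technical work in the proof of \autoref{Theorem:A} proper (as opposed to the geometry of $|\mathcal{P}|$) will concentrate.
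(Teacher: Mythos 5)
Your overall architecture matches the paper's: realise the poset of proper parabolics geometrically, prove that complex is systolic, reduce arbitrary intersections to finite ones by a descending-chain argument bounded by the dimension (equivalently, by subsets of the finite set $S$), and obtain the lattice structure formally from closure under intersection plus the top element $A_S$. The descending-chain and lattice portions of your plan are essentially the paper's Corollary on parabolic closures and need only minor polishing.

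The genuine gap is in the central step: showing that the intersection of \emph{two} parabolic subgroups is a parabolic subgroup. Your proposed mechanism --- that $P_1\cap P_2$ is ``realised by a vertex along the geodesic'' via combinatorial projections onto convex subcomplexes --- is not developed and does not obviously work: the stabiliser of a single vertex on a geodesic between the two simplices has no reason to equal $P_1\cap P_2$, and convexity of fixed-point sets alone does not tell you that an intersection of two simplex stabilisers is again the \emph{full} stabiliser of some simplex. What actually closes the argument in the paper is a double induction that you do not set up. First, one uses the systolic fixed-point lemma (\autoref{lemma:fixing_geodesic}): since $P_1\cap P_2$ fixes both simplices, it fixes \emph{pointwise} an entire combinatorial path joining them, so $P_1\cap P_2=\bigcap_e \Stab(e)$ over the edges $e$ of that path. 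Second, one proves by induction on the length of the path that such an intersection is the stabiliser of a simplex containing the last edge; the inductive step works because two consecutive edges share a vertex, the stabiliser of that vertex is (up to conjugacy) an Artin group on fewer generators whose Artin complex is exactly the link (\autoref{LinkDev}), and one invokes the outer induction hypothesis on $|S|$ inside that smaller Artin group. This outer induction needs a base case, namely dihedral Artin groups, where the intersection property is imported from Garside theory for spherical Artin groups (Cumplido--Gebhardt--Gonz\'alez-Meneses--Wiest) --- a ingredient entirely absent from your plan. Without the link identification, the reduction to vertex stabilisers as smaller Artin groups, and the dihedral base case, the ``projection'' step you flag as a secondary difficulty is in fact the main one, and your outline does not yet contain the idea that resolves it.
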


Note that a consequence of this theorem is that every subset of $A_S$ is contained in a unique minimal parabolic subgroup: This generalises to large-type Artin groups the notion of \textbf{parabolic closure} known for Coxeter groups \citep{Qi} and Artin groups of spherical type \citep{CGGW}. 

%\heyAlex{I propose we remove the following bit with Thm B, since it is already a result of Godelle.}In order to understand arbitrary intersection of parabolic subgroups, it is necessary control arbitrary chains of inclusions of parabolic subgroups.  For subsets $S'' \subset S'$, there is the natural  inclusion $A_{S''} \subset A_{S'}$ of standard parabolic subgroups. For general parabolic subgroups $P'' \subset P'$ of $A_S$, we say that the inclusion $P'' \subset P'$ is \textit{conjugated to an inclusion of standard parabolic subgroups} if there exist subsets $S'' \subset S'$ of $S$ and an element $g\in A_S$ such that $$P'' = gA_{S''}g^{-1}, ~~~~~~~ P' = gA_{S'}g^{-1}.$$   
%For Artin groups of spherical type, 2-dimensional Artin groups and for spherical parabolic subgroups of FC-type Artin groups, \cite{Godelle, Godelle2, GodelleFc} showed that an inclusion of parabolic subgroups is always conjugated to an inclusion of standard parabolic subgroups. We will reprove this result for large-type Artin groups. 
%%
%\begin{theoremalph}\label{Theorem:B}
%	Let $A_S$ be a large-type Artin group. An inclusion of parabolic subgroups of $A_S$ is always conjugated to an inclusion of standard parabolic subgroups.
%\end{theoremalph}
%
%%In particular, this implies the following result, which had recently been proved for Artin groups of spherical type \cite{???}:
%
%\heyAlex{end of the removal}

\bigskip

The approach in this article is geometric in nature. We associate to each Artin group $A_S$ a simplicial complex~$X_S$, called its \textbf{Artin complex}, whose first barycentric subdivision is exactly the geometric realisation of the poset of proper parabolic subgroups of $A_S$. In essence, the Artin complex $X_S$ is the complex obtained by modifying the construction of the Deligne complex in order to allow \textit{all} proper standard parabolic subgroups instead of those of spherical type  (see \autoref{section2} for more details). The advantage in considering this complex is that all the parabolic subgroups of $A_S$ arise as stabilisers of simplices of $X_S$ and can thus be studied geometrically. In particular, studying intersections of parabolic subgroups can be done if we have a sufficiently strong control over the (combinatorial) geodesics of $X_S$ between two simplices. This is possible for large-type Artin groups, as we show that these complexes are non-positively curved in an appropriate sense. The key geometric result of this article is the following:

\begin{theoremalph}\label{thm:main_systolic}
	Let $A_S$ be a large-type Artin group on at least $3$ generators. Then its Artin complex~$X_S$  is systolic.	
\end{theoremalph}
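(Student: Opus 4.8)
The plan is to establish systolicity of $X_S$ by verifying the local combinatorial condition directly: a flag simplicial complex is systolic if and only if it is simply connected and \emph{locally $6$-large}, i.e.\ every vertex link contains no full cycle of length $4$ or $5$. Simple connectedness of $X_S$ should follow from the standard fact that the (analogously constructed) Deligne-type complex built from \emph{all} proper standard parabolic cosets is simply connected --- this is a presentation-complex argument using that the Artin group is generated by the vertex stabilisers with relations coming from edges, exactly as in Charney--Davis; I would state it as a preliminary lemma. So the crux is the local condition, and since $X_S$ has a vertex-transitive (up to the $S$-action on types) $A_S$-action, it suffices to understand the links of the standard vertices $v_{\hat a}$ corresponding to cosets of the maximal standard parabolic $A_{S\setminus\{a\}}$.

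First I would set up the combinatorial description of links. The link of a vertex $v_{\hat a}$ of type $S\setminus\{a\}$ is itself isomorphic to the Artin complex $X_{S\setminus\{a\}}$ of the parabolic subgroup $A_{S\setminus\{a\}}$ --- this is the usual ``links of Deligne complexes are Deligne complexes'' phenomenon --- so by an inductive/peeling argument the whole problem reduces to controlling links of vertices in the smallest interesting case, namely when $|S|=3$, say $S=\{a,b,c\}$ with all labels $m_{ab},m_{ac},m_{bc}\geq 3$. In that case $X_S$ is two-dimensional, the links of the type-$\{b,c\}$ vertices are graphs, and I need to show these graphs have girth at least $6$ and contain no embedded cycle of length $4$ or $5$ (a $2$-dimensional flag complex is systolic iff it is simply connected and all vertex links are graphs of girth $\geq 6$ with no ``short'' full cycles, which here just means girth $\geq 6$). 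Concretely the link of $v_{\widehat{a}}$ is a bipartite-type graph whose vertices are cosets of $\langle b\rangle$ and $\langle c\rangle$ inside $A_{\{b,c\}}$ (a dihedral Artin group), with adjacency given by cosets of $\langle b,c\rangle$-conjugates... wait, more precisely: vertices of the link of $v$ are the simplices of $X_S$ containing $v$, i.e.\ cosets $g\langle b\rangle$ and $g\langle c\rangle$ with $gA_{\{b,c\}} = $ the coset defining $v$, and two such are joined iff the corresponding edge's triangle exists, i.e.\ iff they share a common coset of a rank-$1$... I would make this precise and then the girth computation becomes a statement about the dihedral Artin group $A_{\{b,c\}}$ of type $I_2(m)$: namely that in $A_{\{b,c\}}$, a minimal-length alternating sequence of cosets $\langle b\rangle, \langle c\rangle, \langle b\rangle, \ldots$ closing up has length $\geq 6$, which in turn follows from the fact that $b$ and $c$ generate a free product... no --- from the structure of $A_{I_2(m)}$ and the fact that $\langle b\rangle \cap \langle c\rangle = 1$ and there are ``no short relations,'' quantified using the Garside/lexicographic normal form in the dihedral Artin group. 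This is where I expect the real work: proving the girth-$6$ statement for the dihedral link amounts to a clean but non-trivial lemma about $A_{I_2(m)}$.

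For the general case $|S|\geq 4$, I would argue by induction on $|S|$: links of the maximal-type vertices are Artin complexes of $A_{S\setminus\{a\}}$, which are systolic (hence in particular $6$-large) by the inductive hypothesis once $|S\setminus\{a\}|\geq 3$; the only thing needing separate treatment is the links of lower-dimensional simplices, but in a flag complex local $6$-largeness at all simplices follows from local $6$-largeness at vertices together with the link being a flag complex, so in fact one only needs: (i) every link of a vertex of $X_S$ is a flag complex (automatic, links of flag complexes are flag), and (ii) every such link is $6$-large. Point (ii) for vertices of type $S\setminus\{a\}$ is the inductive hypothesis; but there are also vertices of other types $S'\subsetneq S$, and their links are again Artin complexes $X_{S'}$ relative to the ambient structure --- one must check these are systolic, which is again the inductive hypothesis provided $|S'|\geq 3$, and a direct check when $|S'|\leq 2$ (where the link is either a discrete set, a single simplex, or a graph that one verifies has girth $\geq 6$, reducing once more to the dihedral computation).

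The main obstacle, then, is the base/dihedral estimate: showing that the link graph in the $3$-generator large-type case has girth at least $6$ and no full $4$- or $5$-cycles. I would attack it by choosing a good normal form in the dihedral Artin group $A_{\{b,c\}}$ (the mixed normal form / Garside normal form, or the ``$\rho$-form'' of Mairesse--Mathéus), translating a hypothetical short cycle in the link into a short product of generators equal to the identity with prescribed coset-alternation pattern, and deriving a contradiction from the fact that in $A_{I_2(m)}$ with $m\geq 3$ no such relation exists --- intuitively because the defining relator has length $2m\geq 6$ and the presentation is ``aspherical'' in the relevant range. Once this lemma is in hand, the assembly via the flag condition, the link-is-an-Artin-complex identification, and induction on $|S|$ is routine, and combined with the simple connectedness of $X_S$ yields systolicity.
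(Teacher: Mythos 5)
Your proposal follows essentially the same route as the paper: simple connectedness of $X_S$ from the complex-of-groups/colimit structure, the identification of links of simplices with smaller Artin complexes, induction on $|S|$, and a reduction of everything to the girth bound for the coset graph of $\langle b\rangle$ and $\langle c\rangle$ in the dihedral Artin group. The one step you flag as ``the real work'' --- that an alternating word $b^{r_1}c^{r_2}\cdots$ with nonzero exponents and fewer than $2m_{bc}$ syllables is nontrivial in $A_{\{b,c\}}$ --- is exactly how the paper closes the argument, citing a lemma of Appel--Schupp rather than re-deriving it from normal forms.
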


Large-type Artin groups were recently shown by \cite{HO} to be systolic groups. However, we emphasise that the systolic geometry appearing here is of a rather different nature: The systolic complex associated to $A_S$ considered by Huang-Osajda is essentially a (thickened)  Cayley graph of~$A_S$ for the standard generating set, and as such is quasi-isometric to~$A_S$. By contrast, the Artin complex~$X_S$ studied here is quasi-isometric to the Cayley graph of~$A_S$ with respect to {all its  proper parabolic subgroups, and in particular the action of~$A_S$ on~$X_S$ is cocompact but far from being proper.

\bigskip

As an application, we solve the conjugacy stability problem for parabolic subgroups of large-type Artin groups.  A subgroup $H$ of a group $G$ is \textbf{conjugacy stable} if for every pair of elements $a,b\in H$ such that $a = \alpha^{-1} b \alpha $ there is $\beta\in H$ such that $a = \beta^{-1} b \beta $.  A natural question to ask is which parabolic subgroups of an Artin group are conjugacy stable. 
%The spherical version of \autoref{proposition:conjugacy_Pg} has been used in \citep{CCC} to classify parabolic subgroups of spherical Artin groups up to conjugacy stability, generalizing the pre-existing study for braids of \cite{Meneses}. 
This problem had already been solved for parabolic subgroups of spherical Artin groups \citep{CCC}, generalizing pre-existing results for braids of \cite{Meneses}.
We answer this question for large-type Artin groups:

\begin{theoremalph}\label{conjugacystability}
	Let $A_X$ be a standard parabolic subgroup of a large-type Artin group $A_S$.
%	 and let~$a$ and~$b$ be two vertices of~$\Gamma_X$.
%	 Then~$A_X$ is not conjugacy stable in~$A_S$ if and only if~$a$ and~$b$ are connected by an odd-labeled path in~$\Gamma_S$ and there is no odd-labeled path in~$\Gamma_X$ connecting~$a$ and~$b$.
	 Then~$A_X$ is not conjugacy stable in~$A_S$ if and only if there exist vertices~$a$ and~$b$ of $\Gamma_X$ that are connected by an odd-labeled path in~$\Gamma_S$ and that are  not connected by an odd-labeled path in~$\Gamma_X$. 
\end{theoremalph}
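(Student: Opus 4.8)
The plan is to characterize conjugacy stability of $A_X$ in $A_S$ by reducing it, via the structural results of the paper, to a question about conjugacy of standard generators inside $A_X$ versus inside $A_S$. First I would recall the known fact (Paris, and in the large-type setting following from the combinatorics of the Artin/Coxeter graph) that two standard generators $a,b$ of an Artin group are conjugate if and only if they are connected by an odd-labeled path in the defining graph; the ``only if'' direction of this is detected by the abelianization-type invariant that quotients each standard generator to a coordinate and identifies $a\sim b$ exactly when joined by an odd edge. So the stated graph condition is precisely saying: there are generators $a,b$ of $A_X$ that are conjugate in $A_S$ but not conjugate in $A_X$. This immediately gives the ``$\Leftarrow$'' direction: taking such $a,b\in A_X$, they are conjugate in $A_S$ but no element of $A_X$ conjugates one to the other, so $A_X$ is not conjugacy stable.

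For the ``$\Rightarrow$'' (contrapositive) direction, I would assume the graph condition fails, i.e. any two vertices of $\Gamma_X$ joined by an odd path in $\Gamma_S$ are already joined by an odd path in $\Gamma_X$, and I must show $A_X$ is conjugacy stable. Suppose $a,b\in A_X$ with $b = \alpha^{-1} a \alpha$ for some $\alpha\in A_S$. The key idea is to pass to parabolic closures: let $P = \mathrm{Pcl}(a)$ and $P' = \mathrm{Pcl}(b)$ be the minimal parabolic subgroups of $A_S$ containing $a$ and $b$ respectively, which exist and are well-defined by \autoref{Theorem:A}. Since $A_X$ is itself a parabolic subgroup and contains $a$, minimality gives $P \subseteq A_X$, and similarly $P' \subseteq A_X$; moreover conjugation by $\alpha$ carries $P$ to $P'$, so $P$ and $P'$ are conjugate in $A_S$ and both are parabolic subgroups of the large-type Artin group $A_X$. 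Here is where the systolic geometry of the Artin complex (\autoref{thm:main_systolic}) does the real work: conjugate parabolic subgroups of $A_X$ correspond to simplices of $X_X$ in the same $A_X$-orbit, and I would invoke the description of normalizers of parabolic subgroups (mentioned in the abstract as recovered in this paper) together with the fact that parabolic subgroups of $A_X$ are stable under roots, to arrange $\alpha$ to lie in $A_X$ after multiplying by an element of the normalizer $N_{A_S}(P)$. More precisely, I would show that the set of conjugators $\{\gamma \in A_S : \gamma^{-1} a \gamma = b\}$ is a coset of $Z_{A_S}(a)$, and that one can choose a representative inside $A_X$ by analyzing how $A_X$-orbits of simplices in $X_S$ behave — two simplices of $X_X$ that are $A_S$-conjugate are already $A_X$-conjugate once the odd-path obstruction vanishes, because the only new conjugacies introduced by the ambient group come from identifying standard generators across odd paths.

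The main obstacle, and the step I expect to require the most care, is precisely controlling when $A_S$-conjugacy of parabolic (or cyclic) subgroups sitting inside $A_X$ descends to $A_X$-conjugacy. This is not automatic from conjugacy stability being ``local'': one needs the systolic geometry to say that a combinatorial geodesic in $X_S$ between two $A_X$-simplices can be pushed into the subcomplex $X_X$, or failing that, that the only discrepancy is governed by the odd-path graph-theoretic data. I would handle this by an induction on the complexity of $X$ (number of generators), using the systolic structure to find a vertex of $X_X$ fixed by a suitable power and peeling off a standard generator, the base cases being dihedral Artin groups $A_{ab}$ where conjugacy of $a$ and $b$ holds iff $m_{ab}$ is odd — exactly matching the graph condition. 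A secondary technical point is verifying that the parabolic closure of a single element $a\in A_X$, computed in $A_S$, genuinely lands in $A_X$; this follows from \autoref{Theorem:A} since $A_X$ is a parabolic subgroup containing $a$, hence contains the minimal one, but one should state it cleanly as a lemma.
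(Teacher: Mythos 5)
Your ``$\Leftarrow$'' direction is complete and matches the paper in spirit: generators joined by an odd path in $\Gamma_S$ are conjugate in $A_S$ (explicitly, $\delta_{ab}a\delta_{ab}^{-1}=b$ when $m_{ab}$ is odd), and the abelianization of $A_X$ identifies two generators precisely when they are joined by an odd path in $\Gamma_X$, so the absence of such a path obstructs conjugacy in $A_X$. Your reduction of the ``$\Rightarrow$'' direction to parabolic closures is also the right first move and is exactly what the paper does: $P_g,P_{g'}\subseteq A_X$ by minimality, they are conjugate in $A_S$ by the formula $P_{\alpha^{-1}g\alpha}=\alpha^{-1}P_g\alpha$, and by \autoref{theorem:intersection} each is an $A_X$-conjugate of a standard parabolic $A_Y$, $A_{Y'}$ with $Y,Y'\subseteq X$.

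The genuine gap is in how you finish. The claim that ``two simplices of $X_X$ that are $A_S$-conjugate are already $A_X$-conjugate once the odd-path obstruction vanishes, because the only new conjugacies introduced by the ambient group come from identifying standard generators across odd paths'' is precisely the content of the theorem in this case, and you assert it rather than prove it; the mechanism you propose for it (induction on $|X|$, using systolic fixed points to ``peel off'' a generator) is not developed and is not the tool that closes the argument. What is actually needed, and what the paper uses, is a short case analysis on the common type $Y$ driven by two external inputs you never deploy: first, Paris's classification that two \emph{distinct} irreducible standard parabolic subgroups of a large-type Artin group can only be conjugate when both are singletons $\{a\},\{b\}$ joined by an odd path in $\Gamma_S$ (this rules out any ``new'' conjugacy between non-cyclic parabolics, not just between generators); second, the normalizer theorem $N(P)=P$ for parabolics of type $|Y|\ge 2$ (\autoref{cor:stab_notcyclic}), which converts the statement ``$\beta\alpha\beta'^{-1}$ normalizes $A_Y$'' into ``$\beta\alpha\beta'^{-1}\in A_Y\subseteq A_X$'', hence $\alpha\in A_X$. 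In the remaining cyclic cases one either conjugates by $\beta^{-1}\beta'\in A_X$ (same singleton) or by $\gamma^{-1}c\gamma'$ where $c$ is the explicit conjugator coming from an odd path in $\Gamma_X$ (different singletons). Your appeal to root stability and to the coset-of-centralizer description of the conjugator set does not substitute for these steps. So the architecture is the same as the paper's, but the decisive half of the argument is missing.
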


\noindent
Notice that conjugacy stability is preserved under subgroup conjugation, hence the previous theorem classifies all parabolic subgroups of a large-type Artin group under conjugacy stability.	

\bigskip

As another application, we show that parabolic subgroups of large-type Artin groups are stable under taking roots, whose analogue for Artin groups of spherical type was proved in \citep[Corollary~8.3]{CGGW}

\begin{theoremalph}\label{roots}
	Let $A_S$ be a large-type Artin group, let $P$ be a  parabolic subgroup of $A_S$, and let $g \in A_S$. If $g^n \in P$ for some non-zero integer $n$, then $g \in P$.	
	\end{theoremalph}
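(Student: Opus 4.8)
The plan is to leverage Theorem~\ref{Theorem:A}: since arbitrary intersections of parabolic subgroups are parabolic, every subset of $A_S$ — in particular every cyclic subgroup $\langle g \rangle$ — has a well-defined \emph{parabolic closure} $\mathrm{Pc}(g)$, the unique smallest parabolic subgroup containing $g$. The key observation is that $\mathrm{Pc}(g^n) = \mathrm{Pc}(g)$ for every nonzero integer $n$; granting this, if $g^n \in P$ then $\mathrm{Pc}(g) = \mathrm{Pc}(g^n) \subseteq P$, so $g \in P$ as desired. Thus the whole argument reduces to proving $\mathrm{Pc}(g^n) = \mathrm{Pc}(g)$.

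One inclusion is immediate: $g^n \in \langle g \rangle \subseteq \mathrm{Pc}(g)$, so $\mathrm{Pc}(g^n) \subseteq \mathrm{Pc}(g)$ by minimality. For the reverse inclusion, set $P \coloneqq \mathrm{Pc}(g^n)$; I must show $g \in P$. First reduce to the case where $P$ is standard (replacing $g$ by a conjugate) and then work inside $P$, which is itself a large-type Artin group on its defining generators — so it suffices to treat the case $P = A_S$, i.e. to show that if the parabolic closure of $g^n$ is the whole group, then so is the parabolic closure of $g$. Equivalently, if $g$ is contained in some proper parabolic subgroup $Q$, then $g^n \in Q$, which is trivial — so actually I want the contrapositive phrased via the Artin complex: $\mathrm{Pc}(g) \subsetneq A_S$ if and only if $g$ fixes a simplex of $X_S$ (since proper parabolics are exactly simplex-stabilisers, by the discussion preceding Theorem~\ref{thm:main_systolic}, together with the fact that $\mathrm{Pc}(g)$ is the stabiliser of the minimal fixed simplex). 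Now $g$ and $g^n$ have the same fixed-point set is too strong in general; instead I use that $X_S$ is systolic (Theorem~\ref{thm:main_systolic}), hence a CAT(0)-like / Helly-type space on which $A_S$ acts by simplicial automorphisms: an infinite-order isometry of a systolic complex that fixes no simplex must act as a "hyperbolic-like" isometry, and a power of such an isometry cannot suddenly acquire a fixed simplex (one invokes the classification of isometries of systolic complexes, e.g. the flat torus / combinatorial translation-length dichotomy). So $g$ fixes a simplex $\iff g^n$ fixes a simplex, and when $g$ has finite order the statement is also clear since then $g$ lies in a conjugate of a spherical-type standard parabolic. This yields $\mathrm{Pc}(g) = \mathrm{Pc}(g^n)$.

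Carried out in order, the steps are: (1) invoke Theorem~\ref{Theorem:A} to define parabolic closure $\mathrm{Pc}(\cdot)$ and record its characterisation as the stabiliser of the minimal invariant simplex in $X_S$ (handling the base cases of at most two generators separately, where the group is a dihedral-type or free Artin group and the claim is elementary); (2) reduce Theorem~\ref{roots} to the equality $\mathrm{Pc}(g^n) = \mathrm{Pc}(g)$; (3) prove $\subseteq$ trivially; (4) prove $\supseteq$ by showing that $g$ and $g^n$ fix exactly the same simplices of $X_S$, using the systolicity of $X_S$ and the structure of isometries of systolic complexes to rule out a power gaining new fixed simplices, and treating the finite-order case directly.

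The main obstacle I anticipate is step (4): controlling the fixed-point behaviour of powers in a systolic complex where the action is \emph{not} proper (as the authors stress, it is cocompact but far from proper), so standard "finite-order elements of systolic groups fix a simplex" arguments do not apply verbatim. One likely needs a careful analysis of how an element $g$ with $\mathrm{Pc}(g)$ proper sits inside that parabolic — iterating into smaller and smaller parabolics until reaching an irreducible spherical-type or "visibly rooted" piece — so that the root-stability reduces to the spherical-type case already handled in \citep[Corollary~8.3]{CGGW}. Getting this induction on the "complexity" of $\mathrm{Pc}(g)$ to terminate cleanly, using Theorem~\ref{Theorem:A} to guarantee the relevant intersections stay parabolic at each stage, is where the real work lies.
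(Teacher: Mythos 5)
Your proposal follows essentially the same route as the paper: define the parabolic closure via Theorem~\ref{Theorem:A}, reduce the statement to $P_{g}=P_{g^n}$, and prove the nontrivial inclusion by transferring ellipticity on the Artin complex from $g^n$ to $g$ and then inducting down through proper parabolics until the dihedral (spherical) base case handled by \citep[Corollary~8.3]{CGGW}. The only substantive difference is at the ellipticity-transfer step, which the paper justifies more elementarily and more robustly than your appeal to a translation-length dichotomy for isometries: a vertex fixed by $g^n$ has finite $\langle g\rangle$-orbit, and any group with a finite vertex orbit in a systolic complex stabilises a simplex (Chepoi--Osajda, used here as \autoref{LemmaExistsInvariantSimplex}), which sidesteps any worry about applying a classification of isometries to a complex that is not locally finite and on which the action is far from proper.
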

	
%\heyAlex{I moved the Thm on normalizers to the very end because it is not new. I think it is easier to sell our results if we put the emphasis first on what is new, and only near the end have a "oh by the way, here is another interesting bunch of results we can reprove easily with our approach"...} 
Beside the intersection properties of parabolic subgroups, the previous result relies on understanding the fixed-point sets and normalisers of parabolic subgroups. Their structure has been studied by various authors as we explain below, but the results are a bit hidden in the literature. In the case of large-type Artin groups, our approach provides a unifying perspective that allows us to recover all these results within a single framework. We mention the full result here for ease of reference and we re-prove it with our techniques, as we believe such results on normalisers of parabolic subgroups are of independent interest:

\begin{theoremalph}\label{thm:main_normalizer}
		Let $A_S$ be a large-type Artin group and let $P$ be a parabolic subgroup of type $S'$.
	\begin{itemize}
		\item If $|S'|\geq 2$, then $N(P)=P.$
		\item If $|S'| = 1$, then $N(P)$ splits as a direct product of the form
		$$N(P)= P \times F,$$
		where $F$ is a finitely-generated  free group. Moreover, there is an explicit description of a basis of $F$ (see  \autoref{prop:basis_free} for details).
	\end{itemize}
	\end{theoremalph}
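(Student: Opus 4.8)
The plan is to argue geometrically inside the systolic Artin complex $X_S$ of \autoref{thm:main_systolic}, proceeding by induction on $|S|$; the cases $|S|\le 2$ are treated directly, since there every non-trivial parabolic subgroup has rank $\le 1$ and one is reduced to the classical description of the centraliser of a standard generator in a dihedral Artin group. So assume $|S|\ge 3$ and, after conjugating, take $P=A_{S'}$ standard. Let $\sigma$ be a simplex of $X_S$ with $\Stab(\sigma)=A_{S'}$ (its faces correspond to the standard parabolics $A_T$ with $S'\subseteq T\subsetneq S$, with $\Stab=A_T$). Since $P$ fixes $\sigma$ pointwise, $N(P)$ acts simplicially on $Y:=\Fix(P)\subseteq X_S$, and $P$ acts trivially on $Y$, so the action factors through $\overline N:=N(P)/P$; note that a simplex of $X_S$ lies in $Y$ exactly when its stabiliser is a parabolic subgroup containing $P$, and that every such stabiliser is a \emph{proper} parabolic of $A_S$.

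\emph{Case $|S'|\ge 2$.} The goal is $N(P)=P$. Since large type forbids label-$2$ edges, $A_{S'}$ contains two non-commuting standard generators, and the key geometric input I would establish is that $Y=\Fix(A_{S'})$ is a \emph{bounded} subcomplex of $X_S$ — intuitively, two non-commuting ``reflection-like'' subgroups $\langle a\rangle,\langle b\rangle$ cannot have an unbounded common fixed set in the presence of the systolic (negative-curvature-type) geometry. Granting boundedness, the fixed-point property for actions with bounded orbits on systolic complexes shows that $N(P)$ fixes a simplex $\mu\in Y$. Then $N(P)\subseteq\Stab(\mu)=:Q$, a proper parabolic containing $A_{S'}$; writing $Q=gA_Tg^{-1}$ with $|T|<|S|$, the subgroup $g^{-1}A_{S'}g$ is a parabolic of the smaller large-type Artin group $A_T$ of rank $|S'|\ge 2$, and $g^{-1}N(P)g\subseteq A_T$ normalises it. By the inductive hypothesis applied to $A_T$, that normaliser inside $A_T$ is $g^{-1}A_{S'}g$ itself, whence $N(P)\subseteq A_{S'}=P$, and so $N(P)=P$.

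\emph{Case $|S'|=1$, $P=\langle s\rangle$.} First, $N(\langle s\rangle)=C(s)$: the total-exponent homomorphism $A_S\to\mathbb Z$ sending every generator to $1$ obstructs conjugating $s$ to $s^{-1}$, so the only way to normalise $\langle s\rangle\cong\mathbb Z$ is to centralise $s$; in particular $\langle s\rangle$ is central in $C(s)$. Now $\overline C:=C(s)/\langle s\rangle$ acts on $Y=\Fix(s)$, and the plan is to extract from $Y$ a $C(s)$-invariant tree $T$ — recording how the rank-$1$ and rank-$2$ parabolic subgroups containing $s$ are nested — on which $\overline C$ acts with \emph{trivial} edge stabilisers (every rank-$1$ parabolic containing $s$ equals $\langle s\rangle$, again by the exponent-sum argument) and with vertex stabilisers conjugate to $C_{A_{\widehat r}}(s)/\langle s\rangle$ for the maximal standard parabolics $A_{\widehat r}\ni s$, with only finitely many orbits of cells. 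By the inductive hypothesis each vertex stabiliser is finitely generated free, so Bass--Serre theory identifies $\overline C$ with the fundamental group of a finite graph of finitely generated free groups with trivial edge groups, i.e.\ a finitely generated free group $F$. Since free groups are projective, the central extension $1\to\langle s\rangle\to C(s)\to\overline C\to 1$ splits, giving $C(s)=\langle s\rangle\times F$; tracing the Bass--Serre generators (the inductively-produced bases of the vertex groups, arising from odd-labelled loops based at $s$ in the $\Gamma_{\widehat r}$, together with one loop element for each edge of the quotient graph outside a spanning tree) yields the explicit basis of \autoref{prop:basis_free}.

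\emph{Main obstacle.} The two genuinely geometric steps are where the work lies: boundedness of $\Fix(A_{S'})$ when $|S'|\ge 2$ — the purely combinatorial manipulation of types is circular with the normaliser statement, so this must exploit the systolic geometry of $X_S$ (and the control of combinatorial geodesics used for \autoref{Theorem:A}) — and, when $|S'|=1$, constructing the $C(s)$-invariant tree inside $\Fix(s)$ and checking that its edge stabilisers are trivial; note that for $|S|\ge 4$ the subcomplex $\Fix(s)$ is itself a higher-dimensional systolic complex rather than a tree, so the relevant tree has to be carved out of it.
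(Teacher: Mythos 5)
Your global architecture agrees with the paper's: identify $N(P)$ with $\Stab(\Fix(P))$ in the systolic Artin complex (this is \autoref{LemmaNorm}), handle $|S'|\geq 2$ by showing $\Fix(P)$ is geometrically trivial, and handle $|S'|=1$ by running Bass--Serre theory on a tree extracted from $\Fix(P)$. But the two steps you defer as the ``main obstacle'' are not peripheral difficulties --- they are the entire content of the proof, and the substitutes you sketch for them do not close the gap. For $|S'|\geq 2$ you assert that $\Fix(A_{S'})$ is bounded and appeal to a heuristic about non-commuting reflection-like subgroups; no argument is given, and proving boundedness is essentially as hard as the theorem itself (an element of $N(P)\setminus P$ acting unboundedly on $\Fix(P)$ is exactly what must be excluded). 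The paper proves something sharper and by a different mechanism: the link identity $Lk_{\Fix(P)}(\Delta)\cong\Fix_{X_{S''}}(P)$ (\autoref{lem:link_fix}), combined with $3$-convexity of fixed-point sets coming from \autoref{lemma:fixing_geodesic}, yields by induction on $|S\setminus S'|$ that $\Fix(P)$ is a \emph{single simplex} (\autoref{LemmaSingleSimp}); the base case is that a maximal proper parabolic fixes only one vertex. Once $\Fix(P)$ is one simplex, $N(P)=\Stab(\Fix(P))=P$ follows immediately, with no need for a bounded-orbit fixed-point theorem or a second induction through $A_T$.

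For $|S'|=1$ the same link-induction is what actually produces the tree: the paper shows $\Fix(P)$ is a gallery-connected $(|S|-2)$-dimensional chamber complex whose non-free codimension-$1$ faces separate (\autoref{lem:fix_chamber}), so its \emph{dual graph} is a simplicial tree. Your description of the intended tree is internally inconsistent: you claim trivial edge stabilisers together with vertex stabilisers of the form $C_{A_{\widehat r}}(s)/\langle s\rangle$ for \emph{maximal} standard parabolics $A_{\widehat r}$, but for $|S|\geq 4$ those centralisers are (by your own induction) large free groups, whereas the tree on which the Bass--Serre argument actually runs has type~$1$ vertices (rank-one parabolics equal to $P$) with trivial stabiliser and type~$2$ vertices (dihedral parabolics containing $P$) with \emph{infinite cyclic} stabiliser, namely the image of the centre of the dihedral group --- see \autoref{lem:stab_tree}. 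The conclusion that $N(P)/P$ is finitely generated free, and the splitting $N(P)=P\times F$ using centrality of $\langle s\rangle$, are fine once that tree and those stabilisers are in hand; but as written, both halves of your argument rest on unproved geometric assertions that the paper establishes via the link-induction you have not supplied.
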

	
%\heyAlex{I think that part on Godelle-Paris is a bit too long, and too technical, it should ideally be one single not-too-technical paragraph. The key things that should appear are: 1) mention of Paris' algorithm for conjugation, particularly for generators and the link with odd-labelled paths, 2) a mention that there is a "set-theoretic" description of the normalizer but which does not reveal the algebraic structure, 3) a heuristic as to why this description implies N(P)=P in the non-cyclic case. But I don't think it is enlightening to start talking about ribbons and their definition here.}
The structure of normalisers of parabolic subgroups in Artin groups of large type had already been investigated by Luis Paris and Eddy Godelle, although it is a bit hidden in their papers. Recall that an Artin group that cannot be decomposed as the direct product of two of its standard parabolic subgroups is called irreducible. In \citep[Section~4]{Paris}, the conjugation of standard parabolic subgroups is described by an algorithm. In particular, we know that the only pairs of different irreducible standard parabolic subgroups that can be conjugated are the spherical ones.  In the large case, as all parabolic subgroups are irreducible and the only spherical parabolic subgroups are the \textbf{dihedral} ones (i.e. the parabolic subgroups on  two generators), the situation is as follows: $A_X$ and $A_{X'}$ are conjugate if and only if $X=X'$ or $X=\{a\}$, $X'=\{b\}$ and~$a$ and~$b$ are connected in~$\Gamma_S$  by an odd-labelled path.  \cite[Definition~4.1, Corollary~4.12]{Godelle2} tell us that the conjugating elements between two (possibly equal) standard parabolic subgroups~$A_X$ and~$A_{X'}$ must be the product of an element in~$A_X$ and an element associated to the previous path. If $|X|>1$, such a path does not exists and then $N(A_X)=A_X$. If $|X|=1$, the description of the normaliser is similar to the one given in \autoref{prop:basis_free}. However, the description Godelle gives there is set-theoretic and does not describe the  direct product structure.

 The structure of the normaliser of cyclic parabolic subgroups for large-type Artin groups (and more generally two-dimensional Artin groups) had been obtained, albeit under a different name, in \citep[Proposition 4.5]{MP1}. Moreover, a basis of the corresponding free group had been stated as a remark, but without details. 
 
 \bigskip 

The paper is organised as follows. In \autoref{section2}, we introduce the Artin complex of a general Artin group, and show that its local structure is particularly well-behaved: The links of simplices are themselves (smaller) Artin complexes, see \autoref{LinkDev}. In \autoref{section3}, we use this local structure to prove  \myref{thm:main_systolic}{Theorem}. \autoref{section4} exploits the systolic geometry of the Artin complex to prove  \autoref{theorem:intersection}. In \autoref{section5}, we study the geometry of fixed-point sets of parabolic subgroups in order to prove \myref{thm:main_normalizer}{Theorem}. Finally, we prove \myref{conjugacystability}{Theorem} and \myref{roots}{Theorem} in \autoref{section6}.

\section{The Artin complex}\label{section2}

The goal of this section is to introduce our main geometric object: the Artin complex associated to an Artin group. Later on, we present some of its basic properties. When talking about complexes of groups, we will use the notations of \citep[Chapter~II.12]{BH}.

\begin{definition} \label{DefArtComp} Consider an Artin group $A_S$ with $|S| \geq 2$, and a simplex $K$ of dimension $|S| - 1$. We define a simplex of groups over $K$ as follows. The simplex $K$ is given a trivial local group. There is a one-to-one correspondance between the elements $s_i \in S$ and the codimension $1$ faces of $K$, and we denote by $\Delta_{ s_i }$  these codimension $1$ faces. In particular, $\Delta_{ s_i }$ is given the local group~$\langle s_i \rangle$. Changing the codimension, there is a bijection between the strict subsets of $S$ and the faces of $K$. Every face of $K$ of codimension $k$ can be written uniquely as the intersection
$$\Delta_{S'} := \bigcap\limits_{s_i \in S'} \Delta_{ s_i } \text{ for some } S' \subsetneq S \text{ with } |S'| = k.$$
The face $\Delta_{S'}$ is then given the local group $A_{S'}$.

\smallskip

The morphism associated to an inclusion of faces $K_{S''} \subset K_{S'}$ is the natural inclusion $\psi_{S'S''} : A_{S''} \hookrightarrow A_{S'}$. Let $\mathcal{P}$ be the poset of standard parabolic subgroups of~$A_S$ ordered with natural inclusion. As each~$A_{S'}$ is itself an Artin group \citep{Vanderlek}, there is a simple morphism, $\varphi: G(\mathcal{P}) \hookrightarrow A_S$, given by inclusion, from the complex of groups to the Artin group. The complex $X_S \coloneqq D_{K}(\mathcal{P},\varphi)$ obtained by development of $\mathcal{P}$ over $K$ along $\varphi$ is called the \textbf{Artin complex} associated to $A_S$ (see \citep[Theorem~II.12.18]{BH} for the definition of development, see also the remark below).
% It is the universal cover of the complex of groups $G(\mathcal{P})$ over the fundamental domain $\Delta$.
%  In other words, one can see $D_{\Delta}(\mathcal{P},\varphi)$ as the geometric realisation (over $\Delta$) of the abstract poset~$D(\mathcal{P},\varphi)$ of all the parabolic subgroups of $A_S$.

Note that the action of $A_S$ on $X_S$ is without inversions and cocompact, with strict fundamental domain a single simplex which is isomorphic to $K$. To avoid any confusion, we will from now on denote by $\overline{K}$ the quotient space and by $\overline{\Delta}_{S'}$ its faces, and we will denote by $K$ a chosen fundamental domain of $X_S$ and by $\Delta_{S'}$ its faces. For every simplex $\Delta$ of $X_S$, there is a unique subset $S' \subsetneq S$ such that $\Delta$ is the same orbit as $\Delta_{S'}$. We say that such a simplex is \textbf{of type~$\boldsymbol{S'}$}. 
\end{definition}

\begin{remark}
In \cite[Proof of Theorem~II.12.18]{BH}, the authors give a topological description of the spaces obtained by development of such complexes of groups. In light of this, the Artin complex $X_S$ can also be described by the following:
$$X_S \coloneqq D_{K}(\mathcal{P},\varphi) \coloneqq \quotient{A_S \times K}{\sim},$$
where $(g,x) \sim (g',x') \Longleftrightarrow x = x'$ and $g^{-1} g'$ belongs to the local group of the smallest simplex of $K$ containing $x$.
\end{remark}

\begin{remark}
 Another perhaps more intuitive  way to look at $X_S$ is the following. Consider the poset of proper parabolic subgroups of $A_S$ and its geometric realisation $P_S$, defined as follows:
	\begin{itemize}
		\item The vertex set of $P_S$ is the set of \textit{proper} parabolic subgroups of $A_S$.
		
		\item There is a $(n-1)$-simplex between vertices of $P_S$ corresponding to proper parabolic subgroups $P_1, \ldots, P_n$ whenever there is a sequence of inclusions $P_n \subset \cdots \subset P_1$. This happens if and only if there is an element $g \in A_S$ and proper subsets $S^{(n)} \subsetneq \cdots \subsetneq S'$ of $S$ such that $P_i = gA_{S^{(i)}}$. 
%		cosets of the form $g A_{S^{(n)}}, \ldots, g A_{S''},  g A_{S'}$ if and only if the associated vertices correspond to a string of inclusions of the form $g A_{S^{(n)}} \hookrightarrow \cdots \hookrightarrow g A_{S''} \hookrightarrow  g A_{S'}.$
	\end{itemize}
	
	\noindent
	Then $P_S$ is exactly the barycentric subdivision of $X_S$.
\end{remark}

\begin{lemma} \label{pi0pi1} Let $A_S$ be an Artin group and let $X_S$ be its Artin complex. Then $X_S$ is connected. Additionally, if $|S| \geq 3$, then $X_S$ is simply-connected.
\end{lemma}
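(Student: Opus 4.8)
The plan is to invoke the general machinery for developments of complexes of groups, which is already cited in the construction of $X_S$. Specifically, the paper has set up $X_S = D_K(\mathcal{P},\varphi)$ as the development of a simple complex of groups over a single simplex $K$, along the simple morphism $\varphi\colon G(\mathcal{P})\hookrightarrow A_S$ given by inclusion. By \citep[Theorem~II.12.18 and Corollary~II.12.21]{BH}, when one develops a complex of groups over a connected scwol along a morphism $\varphi$ to a group $G$, the resulting development $D$ is connected precisely because $G$ is generated by the images of the local groups and the base $K$ is connected; here the local groups of the codimension-$1$ faces are the $\langle s_i\rangle$ and these together generate $A_S$, so $X_S$ is connected. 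I would spell this out directly: any vertex of $X_S$ is in the $A_S$-orbit of a vertex of the fundamental domain $K$, consecutive vertices of $K$ are joined by edges, and an element of $A_S$ written as a word in the $s_i$ gives an explicit edge-path from $g\cdot v$ back to $v$ by peeling off one generator at a time, since $s_i$ fixes the face $\Delta_{s_i}$ pointwise.

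For simple connectivity when $|S|\geq 3$, the natural route is again the complex-of-groups dictionary: by \citep[Theorem~II.12.18]{BH}, the fundamental group of the development $D_K(\mathcal{P},\varphi)$ fits into an exact sequence relating $\pi_1$ of the quotient (which is $\pi_1$ of the simplex $K$, hence trivial), the fundamental group of the complex of groups $\pi_1(\mathcal{P})$, and the kernel of $\varphi$. Concretely, since $K$ is simply connected (it is a simplex) and $\varphi$ is the canonical morphism, the development $X_S$ is simply connected if and only if $\varphi$ induces the "universal" development, i.e.\ if and only if $A_S$ is the direct limit (colimit) of the diagram of standard parabolic subgroups $\{A_{S'} : S'\subsetneq S\}$ over the poset $\mathcal{P}$. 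So the real content reduces to the following algebraic claim: for $|S|\geq 3$, the Artin group $A_S$ is the colimit of its proper standard parabolic subgroups. Since the proper subsets of $S$ of size $|S|-1$ already cover $S$, and any defining relator of $A_S$ involves only two generators $s,t$ and hence lives inside $A_{\{s,t\}}\subseteq A_{S'}$ for some proper $S'$ (this is where $|S|\geq 3$ is used — with only two generators there is no proper subset containing both endpoints of a relator), the standard presentation of $A_S$ is exactly a presentation of that colimit. This gives the simple connectivity.

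Alternatively, and perhaps more self-containedly, I would give a direct combinatorial argument: take a loop in $X_S^{(1)}$, push it (by simplicial approximation / the action) into the $1$-skeleton, and write it as a concatenation of edges; using the transitivity of the $A_S$-action on simplices of each type, reduce to showing that any loop based at the vertex $v_\emptyset$ of type $\emptyset$ bounds. Such a loop reads off a word $w$ in $S$ that represents the identity in $A_S$; one then fills the loop disc-by-disc following a van Kampen diagram for $w=1$, where each $2$-cell of the diagram (a defining relator, involving two generators $s,t$) is filled in $X_S$ because the sub-development corresponding to $A_{\{s,t\}}\subseteq A_{S'}$ for a proper $S'\supseteq\{s,t\}$ provides a triangle/disc realizing that relator — again this last step needs a proper $S'$ containing both $s$ and $t$, hence $|S|\geq 3$.

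The main obstacle I anticipate is bookkeeping the translation between the group-theoretic colimit statement and the topological simple-connectivity statement cleanly, i.e.\ making precise that "the link/cover structure of the development forces $\pi_1(X_S) = \ker(\varphi)/(\text{something})$" and that this kernel is trivial exactly because of the observation about relators fitting into proper parabolics. In other words, the heart of the proof is the (easy but must-be-stated-carefully) remark that when $|S|\geq 3$ every two-generator relator of $A_S$ is supported in a proper standard parabolic, whereas when $|S|=2$ this fails and indeed $X_S$ is then just the Bass–Serre tree of an amalgam (or a disjoint structure), which need not be simply connected — consistent with the hypothesis $|S|\geq 3$ in the statement.
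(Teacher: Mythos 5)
Your proposal is correct and follows essentially the same route as the paper: connectedness from the fact that $A_S$ is generated by the local groups of the development, and simple connectivity from the identification of $X_S$ with the universal development once one knows $A_S$ is the colimit of its proper standard parabolic subgroups (the paper simply cites \citep[Proposition~II.12.20]{BH} and \citep{Vanderlek} for these two facts). The only difference is that you justify the colimit claim directly from the presentation (every defining relator involves two generators, hence lies in a proper parabolic when $|S|\geq 3$) where the paper cites van der Lek, which is a harmless and correct elaboration.
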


\begin{proof} This is a direct consequence of \citep[Chapter~II.12, Proposition~12.20]{BH}. $X_S$ is connected because the Artin group $A_S$ is generated by its standard parabolic subgroups. Moreover, if $|S| \geq 3$, then $A_S$ is the colimit of its standard parabolic subgroups, by \citep{Vanderlek}, and thus $X_S= D_\Delta(\mathcal{P}, \varphi)$ is the  universal cover of the complex of groups $G(\mathcal{P})$, hence is simply-connected.
\end{proof}

\begin{definition} \label{DefLink} Let $Y$ be a simplex in a simplicial complex $X$. The \textbf{link} of $Y$ in $X$ is the simpicial complex~$Lk_X(Y)$ consisting of the simplices of $X$ that are disjoint from $Y$ and which together with $Y$ span a simplex of~$X$.
\end{definition}

\begin{lemma} \label{LinkDev} Let $A_S$ be an Artin group with Artin complex $X_S$. Then the link of a simplex of type $S'$ is isomorphic to the Artin complex~$X_{S'}$ associated to the Artin group~$A_{S'}$.
\end{lemma}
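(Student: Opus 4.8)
The plan is to work throughout with the explicit quotient model of the development,
$$X_S = \quotient{A_S \times K}{\sim},$$
where $(g,x)\sim (g',x')$ iff $x=x'$ and $g^{-1}g'$ lies in the local group of the smallest face of $K$ containing $x$, and to read off the link of a simplex directly from this description. Since $A_S$ acts transitively on the simplices of a fixed type, it is enough to identify $Lk_{X_S}(\Delta_{S'})$ for the simplex $\Delta_{S'}$ sitting in the chosen fundamental domain $K$; recall that $\Stab(\Delta_{S'})=A_{S'}$. (When $|S'|\le 1$, $X_{S'}$ is not literally produced by \autoref{DefArtComp}; we adopt the evident convention that $X_\emptyset$ is the empty complex and that $X_{\{a\}}$ is the countable discrete set $A_{\{a\}}$ with its left translation action, and the argument below is then uniform in $|S'|$.)

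First I would pin down the simplices of $X_S$ that strictly contain $\Delta_{S'}$. Inside $K$ the faces containing $\Delta_{S'}$ are precisely the $\Delta_T$ with $T\subseteq S'$, so such a simplex of $X_S$ is a class $[(g,\Delta_T)]$ with $T\subsetneq S'$; its face of type $S'$ is $[(g,\Delta_{S'})]$, and this equals the standard copy of $\Delta_{S'}$ exactly when $g\in A_{S'}$. Combined with $[(g,\Delta_T)]=[(g',\Delta_T)]$ iff $g^{-1}g'\in A_T$, this shows that the simplices of $X_S$ strictly containing $\Delta_{S'}$ are in bijection with the pairs $(T,gA_T)$, $T\subsetneq S'$, $gA_T\in A_{S'}/A_T$. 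Each such simplex $\tau$ has dimension $|S|-|T|-1$ and meets $\Delta_{S'}$ in a face of dimension $|S|-|S'|-1$; the face of $\tau$ spanned by the vertices of $\tau$ not in $\Delta_{S'}$ is disjoint from $\Delta_{S'}$, has dimension $|S'|-|T|-1$, and joins with $\Delta_{S'}$ to give back $\tau$. Hence the simplices of $Lk_{X_S}(\Delta_{S'})$ are exactly these ``opposite faces'', and are themselves in bijection with the pairs $(T,gA_T)$ above.

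Running the same model for $A_{S'}$ gives $X_{S'}=\quotient{A_{S'}\times K'}{\sim}$, whose simplices of type $T$ (for $T\subsetneq S'$) are indexed by $A_{S'}/A_T$. I would then define the candidate map by sending the type-$T$ simplex of $X_{S'}$ labelled by $gA_T$ to the link simplex coming from $\tau=[(g,\Delta_T)]$. It remains to verify that this is a bijection respecting dimension --- both sides have dimension $|S'|-|T|-1$ --- that it is $A_{S'}$-equivariant for the action of $A_{S'}=\Stab(\Delta_{S'})$ on the link, and, most importantly, that it is simplicial, i.e.\ compatible with face relations. The last point is the heart of the matter: in $X_{S'}$ a type-$U$ simplex $hA_U$ is a face of a type-$T$ simplex $gA_T$ precisely when $T\subseteq U$ and $hA_U\subseteq gA_T$, and unwinding the two quotient models shows that $[(h,\Delta_U)]$ is a face of $[(g,\Delta_T)]$ in $X_S$ under exactly the same condition; passing to ``opposite faces'' then shows that the corresponding link simplices satisfy the same relation. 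Putting these together gives the desired simplicial isomorphism $Lk_{X_S}(\Delta_{S'})\cong X_{S'}$.

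Conceptually, this is a concrete instance of the general fact that the link of a simplex in the development of a complex of groups is again a development --- that of the associated ``upper link'' complex of groups (compare \citep[Chapter~II.12]{BH}): here the upper link of $\Delta_{S'}$ in $K$ is a simplex with vertex set $S'$ whose face spanned by $T\subseteq S'$ carries local group $A_{S'\setminus T}$, which is precisely the simplex of groups defining $X_{S'}$, and the relevant morphism to the local group $A_{S'}$ of $\Delta_{S'}$ is the inclusion, i.e.\ the simple morphism used to form $X_{S'}$. I expect the only genuine work to be the bookkeeping in the middle steps: transporting the coset-and-type indexing of simplices through the quotient defining the development, and checking that the ``disjoint, and spans a simplex'' condition defining links translates exactly into the face relations of $X_{S'}$. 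This is routine, but must be handled with some care, particularly the verification of the face relations.
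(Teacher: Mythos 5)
Your argument is correct and reaches the same structural conclusion as the paper, but by a more hands-on route. The paper's proof simply invokes the general machinery of \citep[Chapter~II.12, Construction~12.24]{BH} --- the link of a simplex in a development is the development of the induced complex of groups on the link of the corresponding simplex in the quotient --- and then observes that the induced complex of groups and simple morphism on $Lk_{\overline{K}}(\overline{\Delta}_{S'})$ coincide with the data defining $X_{S'}$. You instead unwind the quotient model $A_S\times K/\!\sim$ by hand and verify this special case directly. What your version buys is self-containment and an explicit parametrisation of the link simplices by pairs $(T,gA_T)$ with $T\subsetneq S'$ and $gA_T\in A_{S'}/A_T$, together with an explicit $A_{S'}$-equivariant matching with the simplices of $X_{S'}$; the cost is the coset bookkeeping, which you carry out essentially correctly. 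Two small points. First, your stated face relation in $X_{S'}$ has the inclusion reversed: since $A_T\subseteq A_U$ when $T\subseteq U$, a type-$U$ simplex $hA_U$ is a face of a type-$T$ simplex $gA_T$ precisely when $T\subseteq U$ and $gA_T\subseteq hA_U$ (equivalently $gA_U=hA_U$); the condition $hA_U\subseteq gA_T$ as written would force $T=U$. Because you assert the \emph{same} condition on both sides of the comparison, this slip does not damage the argument, but it should be corrected. Second, your convention for $|S'|\le 1$ is genuinely needed, since \autoref{DefArtComp} only constructs $X_{S'}$ for $|S'|\ge 2$; the paper leaves this implicit and treats those cases as trivial wherever the lemma is applied, so spelling out the convention is a welcome addition rather than a deviation.
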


\begin{proof} By \citep[Chapter~II.12, Construction 12.24]{BH}, it is possible to describe the link of a simplex in the development of a complex of groups as the development of an appropriate subcomplex of groups. We explain below how this applies to $X_S$.

The link of $\overline{\Delta}_{S'}$ in $\overline{K}$ is a simplex of dimension $|S'|-1$, whose poset of faces is isomorphic to the poset  of proper subsets of $S'$ ordered with the inclusion. The complex of groups $G(\overline{K})$ induces a complex of groups on the link $Lk_{\overline{K}}\big(\overline{\Delta}_{S'}\big)$. Moreover, there is a simple morphism $\varphi_{S'} : G(Lk_{\overline{K}}\big(\overline{\Delta}_{S'}\big)) \rightarrow A_{S'}$ given by the family of homomorphisms $$(\varphi_{S'})_{S''}: A_{S''} \underset{\psi_{S'S''}}{\longrightarrow} A_{S'}.$$
%Let $\mathcal{P}_S \coloneqq  \{A_{S'} \ | \ S' \subset S\}$ be the poset of standard parabolic subgroups of~$A_S$ and let $G(\mathcal{P_S})$ be the associated complex of groups, with simple morphism $\varphi : G(\mathcal{P_S}) \rightarrow A_S$. Take $S' \subset S$ and consider the subposet
%$$Lk_{S'} \coloneqq \{ S'' \in \mathcal{P_S} \ | \ S'' \subset S' \}.$$
%Denote by $G(Lk_{S'})$ the restriction of the complex of groups $G(\mathcal{P_S})$ to $Lk_{S'}$. There is a simple morphism $\varphi_{S'} : G(Lk_{S'}) \rightarrow A_{S'}$ given by the homomorphisms $(\varphi_{S'})_{S''} \coloneqq \psi_{S'S''}$. 
It follows from the construction described in \citep[Chapter~II.12, Construction 12.24]{BH} that the link of $Lk_{X_S}(\Delta_{S'})$  is isomorphic to the development $D(Lk_{\overline{K}}\big(\overline{\Delta}_{S'}\big),\varphi_{S'})$. Note that the induced complex of groups on $Lk_{\overline{K}}\big(\overline{\Delta}_{S'}\big)$ is naturally isomorphic to the complex of groups associated to $A_{S'}$ in \autoref{DefArtComp}. Moreover, the simple morphism $\varphi_{S'}$ coincides with the simple morphism used in \autoref{DefArtComp} to define the Artin complex $X_{S'}$. Putting everything together, it now follows that the link $Lk_{X_S}(\Delta_{S'})$ is isomorphic to $X_{S'}$.

This argument generalises in a straightforward way to any simplex $g\Delta_{S'}$ of $X_S$ of type $S'$.
%$$Lk_{A_{S'}} = D(Lk_{S'},\varphi_{S'})$$
%where:
%
%\medskip
%
%\noindent (1) The left term, called the \textit{upper link} of $A_{S'}$, is defined by
%$$Lk_{A_{S'}} \coloneqq \{g A_{S''} g^{-1} \ | \ g \in A_S, g A_{S''} g^{-1} \subseteq A_{S'} \}.$$
%In particular, the affine realisation $|Lk_{A_{S'}}|$ of $Lk_{A_{S'}}$ is the joint complement of $\Delta_{S'}$ into the star~$St(\Delta_{S'})$ \citep[Chapter~II.12.3]{BH}. This means that $|Lk_{A_{S'}}|$ is disjoint from~$\Delta_{S'}$ but together they span $St(\Delta_{S'})$. It is now clear that $|Lk_{A_{S'}}|$ is the link of $\Delta_{S'}$ into~$St(\Delta_{S'})$, in the sense of \autoref{DefLink}. It follows that $|Lk_{A_{S'}}| = Lk_{X_S}(\Delta_{S'})$ as well.
%
%\medskip
%\noindent
%(2) The right term is the development of $Lk_{S'}$ along $\varphi_{S'}$, as posets. Its affine realisation over the simplex $\Delta_{S'}$ is the developement $D_{\Delta_{S'}}(\mathcal{P}_{S'},\varphi_{S'})$, where $\mathcal{P}_{S'}$ is the poset of standard parabolic subgroups of $A_{S'}$. But $D_{\Delta_{S'}}(\mathcal{P}_{S'},\varphi_{S'})$ is just the Artin complex $X_{S'}$ (\autoref{DefArtComp}).
%
%\medskip
%\noindent
%Putting everything together, the above equivalence, once passed through affine realisations, yields:
%$$Lk_{X_S}(\Delta_{S'}) = X_{S'}.$$
%Of course, the result also holds if we replace $\Delta_{S'}$ by $g \cdot \Delta_{S'}$.
\end{proof}

\section{Systolicity}\label{section3}

%The Artin complex defined in Section 2 turns out to have some interesting combinatorial geometry. In particular, we prove that $X_S$ is systolic, assuming that $A_S$ is large (\autoref{MainThm}). 

The goal of this section is to prove  \myref{thm:main_systolic}{Theorem}. Recall that a combinatorial path $\gamma$ in a simplicial complex $X$ is \textbf{full} if two vertices of $\gamma$ adjacent in $X$ are always adjacent in $\gamma$. If $\gamma$ is in the $1$-skeleton of $X$, then the \textbf{simplicial length} of $\gamma$ is the number $|\gamma|$ of edges contained in $\gamma$. We will denote by $\Stab(T)$ or $\Stab_{X_S}(T)$ the stabiliser of a set of points $T$ in $X_S$. We introduce a few more definitions from systolic geometry \citep{JS}:

\begin{definition} The \textbf{systole} of a simplicial complex $X$ is
$$\operatorname{sys}(X) \coloneqq \min \{ |\gamma| \ | \ \gamma \text{ is an embedded full cycle of } X \} \in \{3,4,\cdots, \infty\}.$$
For $k \in \{3, \ldots, \infty\}$, we say that $X$ is \textbf{locally k-large} if $\operatorname{sys}(Lk_X(Y)) \geq k$ for all simplices $Y \subseteq X$. We say that $X$ is \textbf{k-large} if it is locally $k$-large and $\operatorname{sys}(X) \geq k$.  $X$ is \textbf{k-systolic} if it is connected, simply-connected and locally $k$-large. Finally, $X$ is called \textbf{systolic} if it is $6$-systolic.
\end{definition}

\noindent The main result of this section is the following:

\begin{theorem} \label{MainThm} Let $A_S$ be an Artin group with $|S| \geq 3$. If all coefficients in $A_S$ are at least $k \in \{3, \ldots, \infty\}$, then its Artin complex $X_S$ is $2k$-systolic. In particular, if $A_S$ is of large type, then $X_S$ is systolic.
\end{theorem}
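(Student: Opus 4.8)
The plan is to proceed by induction on $|S|$. The base case is $|S|=3$: here each proper standard parabolic subgroup is either trivial, cyclic, or dihedral, so the link of every positive-dimensional simplex of $X_S$ is discrete (a $0$-dimensional complex), hence has infinite systole, and the only non-trivial links to control are the links of vertices. By \autoref{LinkDev}, the link of a vertex of type $\{a,b\}$ is the Artin complex of the dihedral Artin group $A_{ab}$; this is a graph, and the heart of the base case is to show that its girth is at least $2m_{ab}\ge 2k\ge 6$. Vertices of this graph are cosets of $\langle a\rangle$ and $\langle b\rangle$ in $A_{ab}$, and edges correspond to cosets of the trivial subgroup, i.e. to elements of $A_{ab}$; so the graph is the coset graph and a short cycle would produce a relation in $A_{ab}$ of the form $a^{\pm1}b^{\pm1}a^{\pm1}\cdots$ of length less than $2m_{ab}$ equalling the identity, contradicting known normal-form / small-cancellation-type facts about dihedral Artin groups (for instance, $A_{ab}$ maps onto $\mathbb{Z}$ and injects into a surface-group-like object, or one uses the explicit solution to the word problem in $A_{ab}$). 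This girth computation is where I expect to spend the most effort, as it is the geometric input that makes everything work.

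For the inductive step, assume the theorem holds for all Artin groups on fewer than $|S|$ generators, with $|S|\ge 4$. By \myref{pi0pi1}{Lemma}, $X_S$ is connected and simply-connected since $|S|\ge 3$, so it remains to check local $2k$-largeness, i.e. that $\operatorname{sys}(Lk_{X_S}(Y))\ge 2k$ for every simplex $Y$. If $Y$ is non-empty of type $S'$ with $S'\ne\emptyset$, then $|S'|<|S|$ and by \autoref{LinkDev} the link $Lk_{X_S}(Y)\cong X_{S'}$; applying the inductive hypothesis to $A_{S'}$ (whose coefficients are still $\ge k$) gives $\operatorname{sys}(Lk_{X_S}(Y))=\operatorname{sys}(X_{S'})\ge 2k$ when $|S'|\ge 3$, while for $|S'|\le 2$ the complex $X_{S'}$ is either discrete or the dihedral graph handled in the base case, again of systole $\ge 2k$. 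The one remaining case is $Y=\emptyset$, i.e. one must bound $\operatorname{sys}(X_S)$ itself; but $X_S$ is simply-connected and locally $2k$-large (by what we just proved), and a standard fact in systolic geometry — any simply-connected, locally $6$-large complex is $6$-systolic, and more precisely local $2k$-largeness plus simple-connectivity forces $\operatorname{sys}\ge 2k$ when $2k\ge 6$ — closes the gap. (Concretely: a shortest full cycle in a simply-connected complex of length $<2k$ would bound a disc diagram which, by a curvature/area argument using that all vertex links have systole $\ge 2k\ge 6$, cannot exist.)

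Putting these together: $X_S$ is connected, simply-connected, and has all links of systole $\ge 2k$, hence is $2k$-systolic; and $2k$-systolic implies $6$-systolic, i.e. systolic, whenever $k\ge 3$, which is exactly the large-type hypothesis. The main obstacle is genuinely the base case girth bound for the dihedral Artin complex $X_{ab}$ — one needs a clean argument that no word $a^{\epsilon_1}b^{\epsilon_2}a^{\epsilon_3}\cdots$ with $\epsilon_i=\pm1$ of syllable length $<2m_{ab}$ represents the identity in $A_{ab}$, which should follow either from the standard solution to the word problem in two-generator Artin groups or from viewing $A_{ab}$ inside its associated surface/Coxeter picture; everything after that is a formal induction glued together by \autoref{LinkDev} and \myref{pi0pi1}{Lemma}. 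I would also double-check one subtlety: that \autoref{LinkDev} is applied to links of simplices of \emph{all} types including the top-dimensional proper ones, and that the degenerate low-rank Artin complexes ($|S'|\in\{0,1,2\}$) are correctly identified as discrete sets or graphs so that the inductive bookkeeping has no gaps.
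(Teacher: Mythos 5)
Your plan follows the paper's proof almost exactly: induction on $|S|$, using \autoref{pi0pi1} for connectivity and simple connectivity and \autoref{LinkDev} to identify links of simplices with smaller Artin complexes, with everything resting on the girth of the dihedral Artin complex being $2m_{ab}$. There is, however, one concrete error in your base-case reduction. You claim that a short cycle in the coset graph of $A_{ab}$ produces a trivial word of the form $a^{\pm1}b^{\pm1}a^{\pm1}\cdots$ with fewer than $2m_{ab}$ syllables. That is not what a cycle gives you: the edges incident to a vertex $g\langle a\rangle$ form the full $\langle a\rangle$-orbit of a single edge, so consecutive edges of a non-backtracking cycle differ by $a^{r}$ (or $b^{r}$) for an \emph{arbitrary} non-zero integer $r$, and a cycle of combinatorial length $\ell$ yields a trivial alternating word $a^{r_1}b^{r_2}\cdots$ with $\ell$ syllables and arbitrary non-zero exponents $r_i$. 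The statement you propose to prove (non-triviality of short alternating words with exponents $\pm1$ only) is strictly weaker and would not rule out, say, a length-$4$ cycle realised by a relation $a^{2}b^{-1}a^{-3}b=1$. What you actually need is the result of Appel and Schupp \citep[Lemma~6]{AS}: any alternating word $a^{r_1}b^{r_2}\cdots a^{r_{k-1}}b^{r_k}$ with all $r_i\neq 0$ that represents the identity in $A_{ab}$ has at least $2m_{ab}$ syllables. This is precisely what the paper invokes in \autoref{MainLemma}, and with that citation substituted your base case is complete. The rest of your argument is sound; in particular your worry about the case $Y=\emptyset$ is moot, since $2k$-systolicity only requires local $2k$-largeness together with connectivity and simple connectivity, and global $2k$-largeness then follows automatically from \citep[Proposition~1.4]{JS}, as you note.
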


\noindent In order to prove this theorem, we need the following lemma:

\begin{lemma} \label{MainLemma} Let $A_S$ be an Artin group on two generators $a, b$ with coefficient $m_{ab}\in \{3, \ldots, \infty\}$ and Artin complex $X_S$. Then $\operatorname{sys}(X_S) = 2m_{ab}$.
\end{lemma}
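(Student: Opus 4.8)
The plan is to analyze the dihedral Artin group $A_S = \langle a, b \mid \underbrace{aba\cdots}_{m} = \underbrace{bab\cdots}_{m}\rangle$ with $m = m_{ab}$ directly, since here $|S| = 2$ and the Artin complex $X_S$ is a graph: its vertices are the cosets $gA_{\{a\}}$ and $gA_{\{b\}}$, and edges join $gA_{\{a\}}$ to $gA_{\{b\}}$ (the cosets of the trivial group $A_\emptyset$ correspond to the edges, i.e. to the group elements $g$). So $X_S$ is the coset graph / Bass--Serre tree-like object for the amalgam decomposition, and because $|S| = 2 < 3$ we do \emph{not} know it is simply connected --- indeed it has cycles, and computing its systole is the whole point. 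Since every full cycle is in particular an embedded cycle, and since $X_S$ is bipartite (vertices split into type-$\{a\}$ and type-$\{b\}$), every cycle has even length, so $\operatorname{sys}(X_S)$ is an even number (or $\infty$), and the claim is that its value is exactly $2m$.

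First I would establish the lower bound $\operatorname{sys}(X_S) \geq 2m$. A combinatorial cycle of length $2n$ in $X_S$ corresponds to a sequence of group elements and alternating $\langle a\rangle$-, $\langle b\rangle$-cosets; reading off the ``turns'' one obtains a word $a^{\epsilon_1} b^{\epsilon_2} a^{\epsilon_3} \cdots$ (with nonzero exponents $\epsilon_i \in \mathbb{Z}$) that represents the identity in $A_S$ precisely when the cycle closes up, and the cycle is embedded exactly when no proper subword represents something that would create a chord. So the length of the shortest embedded cycle is governed by the shortest nontrivial alternating product of powers of $a$ and $b$ equal to $1$ in $A_S$. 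The relator $aba\cdots = bab\cdots$ rewrites as $(aba\cdots)(bab\cdots)^{-1} = 1$, an alternating word of syllable length $2m$, giving a cycle of length $2m$; I must show nothing shorter works. For this I would use the standard solution to the word problem in dihedral Artin groups (they are biautomatic, embed in $\mathbb{Z} \times D_\infty$-like quotients, or one can use the fact that $A_S$ maps onto the free product structure governing short words, or use the Tits/Garside normal form): any alternating word $a^{\epsilon_1}b^{\epsilon_2}\cdots$ of syllable length $< 2m$ is nontrivial unless it is already empty. Equivalently, the only relation among the generators up to syllable length $2m$ is the defining one. The cleanest route is to invoke the Deligne complex / the known fact that the Cayley 2-complex of $A_S$ with this presentation is aspherical and that the link of a vertex in the associated combinatorial picture has girth $2m$ --- this is essentially the dihedral case underlying Charney--Davis, and it is where I expect to lean on an external result rather than reprove the word problem.

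Then I would establish the upper bound $\operatorname{sys}(X_S) \leq 2m$ by exhibiting an explicit full cycle of length $2m$ and checking fullness. The natural candidate is the cycle visiting the cosets
$$
A_{\{a\}},\ A_{\{b\}},\ aA_{\{a\}},\ abA_{\{b\}},\ abaA_{\{a\}},\ \ldots
$$
alternating the two generators $m-1$ times on one side and closing up via the relator on the other side, so that the $2m$ consecutive group elements traversed are $1, a, ab, aba, \ldots$ (length-$m$ prefixes) matched against $1, b, ba, \ldots$. Closure is exactly the defining relation $\underbrace{aba\cdots}_{m} = \underbrace{bab\cdots}_{m}$. Fullness --- i.e. that no two non-consecutive vertices of this cycle are adjacent in $X_S$ --- is the place where I again need control of short words: two cosets $gA_{\{a\}}$ and $hA_{\{b\}}$ are adjacent iff $g^{-1}h \in A_{\{a\}}A_{\{b\}} = \langle a\rangle\langle b\rangle$, i.e. iff $g^{-1}h = a^{i}b^{j}$, and I must rule this out for the non-consecutive pairs along the cycle; by the bipartite structure the only potential chords are between an $\{a\}$-vertex and a $\{b\}$-vertex at cycle-distance $\geq 3$, and ruling these out reduces once more to the assertion that short alternating words are nontrivial. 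So the main obstacle, appearing in both directions, is the same: a sharp statement that in a dihedral Artin group the shortest nontrivial alternating product of powers of $a$ and $b$ has syllable length exactly $2m$. Once that lemma is in hand (cited from the literature on dihedral Artin groups, or proven via the standard normal form), both bounds follow, giving $\operatorname{sys}(X_S) = 2m_{ab}$.
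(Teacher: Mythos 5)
Your proposal is correct and follows essentially the same route as the paper: cycles in the coset graph are translated into alternating words $a^{r_1}b^{r_2}\cdots$ equal to the identity, and the crux is exactly the fact you isolate, that a nontrivial alternating product of nonzero powers of $a$ and $b$ must have syllable length at least $2m_{ab}$ (the paper cites Lemma~6 of Appel--Schupp for this, rather than the Deligne-complex asphericity you suggest). The paper only writes out the lower bound, which is all that is needed for systolicity, so your extra care with the explicit $2m$-cycle and its fullness is fine but not where the substance lies.
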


\begin{proof} If $m_{ab}=\infty$, it follows directly from the definition of the Artin complex that $X_S$ is the Bass-Serre tree associated to the splitting $\langle a \rangle * \langle b \rangle$. The result is then immediate.
	
	Let us now assume that $m_{ab} < \infty$. Let $e$ be the edge in $X$ whose vertices $x$, $y$ correspond to the cosets $\langle a \rangle$ and $\langle b \rangle$. Let $\gamma$ be a non-backtracking loop in $X_S$. Since $X_S$ is a bipartite graph coloured by the cosets of $\langle a \rangle$ and $\langle b \rangle$ respectively, the length of $\gamma$ is even. Denote by $e_0 , e_1, \ldots, e_k$ the edges of $\gamma$. Since the action of $A_S$ on $X_S$ is transitive on edges, let us assume that $e_0 = e$.
	
%	We first show how we can associate to $\gamma$ a word in the free group $F_S$.
%	Since the group is transitive, we can suppose without loss of generality that $\gamma$ starts inside $e$. 
	Note that the action of $\langle a \rangle$  is transitive on the set of edges around $x$, and so is the action of $\langle b \rangle$ on the edges around $y$. Assume without loss of generality that $\gamma$ first goes through $x$, i.e. $e_1$ and $e_0$ share the vertex $x$.
%	 Then $\gamma$ goes through an edge of the form $a^{r_1} e$, $r_1 \in \mathbf{Z} \backslash \{0\}$ and reaches some vertex~$p_1$. 
	Then $e_1$ must be of the form $a^{r_1} e$, for some $r_1 \in \mathbf{Z} \backslash \{0\}$. Note that the edges $e_1$ and $e_2$ then share the vertex~$a^{r_1}y$. 
%	The action of $a^{r_1} \langle b \rangle a^{-r_1}$ is transitive on the set of edges around~$p_1$, hence 
%	$\gamma$ goes through one of the edges $a^{r_1} b^{r_2} e$, $r_2 \in \mathbf{Z} \backslash \{0\}$ and reaches some vertex $q$. 
	The action of $a^{r_1} \langle b \rangle a^{-r_1}$ is transitive on the set of edges around~$a^{r_1}y$, thus $e_2$ must of the form $a^{r_1} b^{r_2} e$, for some $r_2 \in \mathbf{Z} \backslash \{0\}$.
	We continue this process by induction until $\gamma$ stops. 
%	We have a word
%$$a^{r_1} b^{r_2} \cdots a^{r_{k-1}} b^{r_k}$$
%such that $\gamma$ goes from $e$ to $a^{r_1} b^{r_2} \cdots a^{r_{k-1}} b^{r_k} e$ and has combinatorial length $k$. 
In particular, the final edge $e_k$ is of the form 
$$a^{r_1} b^{r_2} \cdots a^{r_{k-1}} b^{r_k}$$
for non-zero integers $r_1, \ldots, r_k$. 
 But since $e_k = e$ as $\gamma$ is a loop, we get $a^{r_1} b^{r_2} \cdots a^{r_{k-1}} b^{r_k} e = e$. Since $\operatorname{Stab}(e) = \{1\}$, it follows that $a^{r_1} b^{r_2} \cdots a^{r_{k-1}} b^{r_k}$ must be trivial in $A_S$. But it is also a non-trivial word, as $\gamma$ is not homotopically trivial. By \citep[Lemma~6]{AS}, we must have $k \geq 2m_{ab}$. Hence, the combinatorial length of $\gamma$ is $|\gamma| = k \geq 2 m_{ab}$.
\end{proof}

\noindent We can now prove the main theorem:

\begin{proof}[Proof of \autoref{MainThm}] We will prove by induction on the number $|S|$ of generators of the Artin groups $A_S$ that their associated Artin complexes $X_S$ are $2k$-systolic.

If $|S| = 3$, we know from \autoref{pi0pi1} that $X_S$ is connected and simply connected. It only remains to show that for all $g \in A_S$, for all $S' \subsetneq S$, the simplex $g \cdot \Delta_{S'}$ is such that $Lk_{X_S}(g \cdot \Delta_{S'})$ is $2k$-large. If $|S'| = 2$, then the link $Lk_{X_S}(g \cdot \Delta_{S'})$ is isomorphic to the Artin complex $X_{S'}$ associated to the Artin group $A_{S'}$ (\autoref{LinkDev}), and the latter is $2k$-large by \autoref{MainLemma}. The cases $|S'| = 0$ or $1$ are trivial.

Let us now assume that $|S| > 3$ and that every Artin complex $A_{S'}$ with $S' \subsetneq S$ is $2k$-systolic. Again, we know from \autoref{pi0pi1} that $X_S$ is connected and simply connected, so it only remains to show that for all $g \in A_S$, for all $S' \subsetneq S$, the simplex $g \cdot \Delta_{S'}$ is such that $Lk_{X_S}(g \cdot \Delta_{S'})$ is $2k$-large. If $|S'| \geq 2$, then $Lk(g \cdot \Delta_{S'},X_S)$ is isomorphic to the Artin complex $X_{S'}$ associated to the Artin group $A_{S'}$ (\autoref{LinkDev}). The latter is $2k$-systolic by the induction hypothesis, hence is $2k$-large as well \citep[Proposition~1.4]{JS}. Once again, the cases $|S'| = 0$ or $1$ are trivial.
\end{proof}
\section{Intersection of parabolic subgroups}\label{section4}

The aim of this section is to use the systolicity of the Artin complex of an Artin group of large type to prove  \myref{Theorem:A}{Theorem}. We will do it by proving the following theorem:

%\begin{definition} Let $A_S$ be an Artin group of large type with of set of Artin generators~$S$.
%Let $P$ be a parabolic subgroup of~$A_S$. We say that a subgroup~$P'$ of~$P$ is a \textbf{parabolic subgroup} of~$P$ if there exists an element $g\in A$ and subsets $S_1\subseteq S_2 \subseteq S$ such that 
%$$gPg^{-1}=A_{S_2} \,\text{ and }\, gP'g^{-1}=A_{S_1}.$$
%In other words, $P'$ is a parabolic subgroup of~$P$ if and only if the inclusion $P'\subseteq P$ is conjugated to an inclusion of standard parabolic subgroups of~$A_S$.
%\end{definition}

\begin{definition} Let $P_1$ and $P_2$ be two parabolic subgroups of an Artin group $A_S$ such that $P_1\subseteq P_2$. We say that $P_1$ is a parabolic subgroup of $P_2$ if $P_1\subseteq P_2$ is conjugate to an inclusion of standard parabolic subgroups $A_{S''}\subseteq A_{S'}$, $S''\subseteq S'$.
\end{definition}

\begin{theorem}\label{theorem:intersection} Let $A_S$ be an Artin group of large-type.

\begin{enumerate}

\item\label{item1} The intersection of two parabolic subgroups of $A_S$ is again a parabolic subgroup of $A_S$.

\item\label{item2} If $P_1$ and $P_2$ are two parabolic subgroups of $A_S$ such that $P_1\subseteq P_2$, then $P_1$ is a parabolic subgroup of $P_2$.

\end{enumerate} 
\end{theorem}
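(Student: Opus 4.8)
The plan is to work entirely inside the Artin complex $X_S$, exploiting the dictionary between parabolic subgroups of $A_S$ and stabilisers of simplices established in \autoref{DefArtComp}, together with the systolic geometry from \autoref{MainThm}. Since $\operatorname{Stab}_{X_S}(g\Delta_{S'}) = gA_{S'}g^{-1}$, a parabolic subgroup of $A_S$ is exactly the stabiliser of a simplex of $X_S$ (with the whole group $A_S$ as the degenerate case corresponding to the empty simplex / fundamental domain $\overline{K}$). Item \eqref{item1} then reduces to: given two simplices $\sigma_1 = g_1\Delta_{S_1}$ and $\sigma_2 = g_2\Delta_{S_2}$, show that $\operatorname{Stab}(\sigma_1)\cap\operatorname{Stab}(\sigma_2)$ is again the stabiliser of some simplex. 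Item \eqref{item2} is the statement that when one parabolic sits inside another, the inclusion is ``standard up to conjugacy'', which geometrically should say that if $\operatorname{Stab}(\sigma_1)\subseteq\operatorname{Stab}(\sigma_2)$ then (after conjugation) $\sigma_2$ is a face of $\sigma_1$; I would prove \eqref{item2} first and use it, or prove the two together by induction on $|S|$.

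The heart of the argument is a ``convexity / bridge'' statement for fixed-point sets. For a parabolic $P = \operatorname{Stab}(\sigma)$, consider its fixed-point set $\operatorname{Fix}(P) \subseteq X_S$ — the union of simplices fixed pointwise by $P$. The key claims I would aim to establish, using systolicity, are: (a) $\operatorname{Fix}(P)$ is a nonempty, connected, full subcomplex of $X_S$, and in fact is itself systolic (a full convex subcomplex of a systolic complex is systolic by \citep{JS}); (b) $\operatorname{Fix}(P)$ has a well-defined ``type'' structure, and the minimal simplices of $\operatorname{Fix}(P)$ are exactly those whose stabiliser is $P$ itself. Granting this, if $P_1 \subseteq P_2$ then $\operatorname{Fix}(P_2) \subseteq \operatorname{Fix}(P_1)$, and one analyses how the smaller complex sits in the larger to read off that the inclusion is conjugate to a standard one — this gives \eqref{item2}. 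For \eqref{item1}, the intersection $P_1 \cap P_2$ fixes $\operatorname{Fix}(P_1) \cup \operatorname{Fix}(P_2)$, or rather one wants to find a single simplex stabilised exactly by $P_1\cap P_2$; the natural approach is to take a combinatorial geodesic in $X_S$ between (a minimal simplex of) $\operatorname{Fix}(P_1)$ and one of $\operatorname{Fix}(P_2)$ and run an induction on its length, using at each step that in a systolic complex geodesics interact with links in a controlled way (the links being smaller Artin complexes by \autoref{LinkDev}, where the inductive hypothesis applies). When the geodesic has length zero the two fixed-point sets share a simplex and one works in its link; the inductive step peels off one edge of the geodesic and replaces one of the parabolics by a parabolic ``one step closer'', invoking \eqref{item2} to control types.

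Concretely, the induction is on $|S|$: the base case $|S| \le 2$ is classical (dihedral and smaller Artin groups, where parabolic intersection is known), and for the inductive step one reduces intersections of parabolics that both lie strictly inside a common link $Lk_{X_S}(\Delta) \cong X_{S'}$ to the statement for $A_{S'}$, while intersections of parabolics whose fixed-point sets are ``far apart'' are handled by the geodesic induction, shrinking the distance and eventually landing in a common link. The ``lattice'' conclusion of \autoref{Theorem:A} is then formal: arbitrary intersections reduce to finite ones since $X_S$ is locally finite-dimensional (a descending chain of parabolics of strictly decreasing simplex-type must stabilise), and the existence of least upper bounds follows once meets exist and there is a top element $A_S$.

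The step I expect to be the main obstacle is controlling the geodesic induction — specifically, showing that one really can ``push'' a parabolic subgroup along a systolic geodesic so that its intersection with the other parabolic is preserved, and that the type bookkeeping closes up. This requires genuinely using the $6$-largeness (not merely simple connectivity): one needs that full subcomplexes fixed by parabolics are convex, that geodesics between them behave like geodesics in CAT(0)-like spaces (no unexpected shortcuts through links), and that the combinatorial Gauss–Bonnet / flag-no-square structure of systolic complexes forbids the pathological configurations that would break the induction. Setting up the right notion of ``distance between parabolic subgroups'' and proving it is realised and behaves monotonically under the reduction is where the real work lies; everything else is the standard translation between group-theoretic and simplicial language.
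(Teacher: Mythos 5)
Your proposal follows essentially the same route as the paper: a double induction (on $|S|$, using that links are smaller Artin complexes, and on the length of a combinatorial path joining the two simplices), with the systolic geometry entering only through the convexity of fixed-point sets. The ``main obstacle'' you flag at the end is exactly what the paper's \autoref{lemma:fixing_geodesic} dispatches --- since $P_1\cap P_2$ fixes both endpoints it fixes the whole geodesic pointwise, so no parabolic needs to be ``pushed'' along the path; one simply intersects the edge stabilisers one at a time inside the link of the shared vertex, where the induction hypothesis (including your item~(2), needed to keep the running intersection attached to the current edge) applies.
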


%The \autoref{item1} of this theorem has been previously proved for Artin groups of spherical type in \citep[Theorem~9.5]{CGGW} and for spherical parabolic subgroups of Artin groups of FC type it was independently proved in \citep[Theorem~3.1]{Rose} and \citep[Lemma~3.2]{MP}. We can find the spherical version of \autoref{item2} in \citep[Theorem~0.2]{Godelle}. We will use the spherical results and the techniques used for the FC cases.

\noindent
Note that the second item in the previous theorem is already a result of  \citet[Theorem~3]{Godelle2}. However, we believe the reader may be interested in recovering this result directly from our perspective.

\bigskip

In all this section, $A_S$ denotes an Artin group on at least $3$ generators. First notice the Artin complex allows us to understand geometrically the parabolic subgroups of $A_S$, via the following correspondence:

\begin{lemma}\label{lemma:geometric_parabolics}
Let $A_S$ be an Artin group on at least 3 generators and let~$X_S$ be its associated Artin complex. Then
\begin{itemize}

\item The parabolic subgroups of $A_S$ are exactly the stabilisers of simplices of $X_S$.

\item Let $\Delta$ be a simplex of $X_S$. The parabolic subgroups of~$\Stab_{X_S}(\Delta)$ are exactly the stabilisers of the simplices that contain $\Delta$.

\end{itemize}

\end{lemma}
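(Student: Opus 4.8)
\textbf{Proof plan for \autoref{lemma:geometric_parabolics}.}

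The plan is to leverage the explicit description of $X_S$ as a development of a complex of groups, for which the combinatorial theory of \citep[Chapter~II.12]{BH} gives us control over simplex stabilisers. First I would recall the basic fact about developments: in $X_S = D_K(\mathcal{P},\varphi)$, the stabiliser of the fundamental simplex $\Delta_{S'}$ of type $S'$ is exactly the image of its local group under $\varphi$, which by construction is the standard parabolic subgroup $A_{S'}$ (here one uses that $\varphi$ is a \emph{simple} morphism and, via \citep{Vanderlek}, injective, so no collapsing occurs). Consequently, for an arbitrary simplex $\Delta = g\Delta_{S'}$ of type $S'$, the stabiliser is the conjugate $\Stab_{X_S}(\Delta) = gA_{S'}g^{-1}$, which is by definition a parabolic subgroup of type $S'$. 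Conversely, every parabolic subgroup is by definition of this form $gA_{S'}g^{-1}$ with $S' \subsetneq S$ (the case $S' = S$ being excluded since the fundamental domain is a \emph{proper} simplex, i.e.\ $|S'| \le |S|-1$; and $A_S$ itself is not a proper parabolic subgroup), hence arises as such a stabiliser. This proves the first bullet point. One subtlety worth spelling out: different simplices can have the same stabiliser, so the correspondence ``simplices $\to$ parabolic subgroups'' is not injective, but this does not affect the statement, which only asserts that the image of this map is exactly the set of parabolic subgroups.

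For the second bullet point, fix a simplex $\Delta$; after translating by an element of $A_S$ we may assume $\Delta = \Delta_{S'}$ is the standard simplex of type $S'$, so $\Stab_{X_S}(\Delta) = A_{S'}$, which we view as an Artin group on the generating set $S'$ in its own right. Now I would invoke \autoref{LinkDev}: the link $Lk_{X_S}(\Delta_{S'})$ is isomorphic, $A_{S'}$-equivariantly, to the Artin complex $X_{S'}$ of $A_{S'}$. Simplices of $X_S$ strictly containing $\Delta_{S'}$ correspond bijectively and $\Stab(\Delta_{S'})$-equivariantly to simplices of $Lk_{X_S}(\Delta_{S'}) \cong X_{S'}$, and under this correspondence the $A_{S'}$-stabiliser of a simplex of $X_S$ containing $\Delta_{S'}$ equals the $A_{S'}$-stabiliser of the corresponding simplex in $X_{S'}$. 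Applying the first bullet point to the Artin group $A_{S'}$ and its complex $X_{S'}$ identifies these stabilisers with exactly the parabolic subgroups of $A_{S'} = \Stab_{X_S}(\Delta)$. (One should also note the degenerate containment $\Delta \subseteq \Delta$ itself, giving the improper parabolic $A_{S'}$, so that ``simplices that contain $\Delta$'' is read inclusively; alternatively one restricts to proper parabolic subgroups and strictly larger simplices — whichever convention the rest of the paper uses.) Finally I would remark that the general case follows by conjugating: if $\Delta = g\Delta_{S'}$, the same statement for $\Delta_{S'}$ transported by $g$ gives the claim, since conjugation by $g$ carries parabolic subgroups of $A_{S'}$ to parabolic subgroups of $gA_{S'}g^{-1}$ and simplices containing $\Delta_{S'}$ to simplices containing $\Delta$.

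The main obstacle I anticipate is purely bookkeeping rather than conceptual: one must be careful that the notion of ``parabolic subgroup of $\Stab(\Delta)$'' in the statement matches the intrinsic notion of parabolic subgroup of the Artin group $A_{S'}$ — this is precisely where the hypothesis that we are dealing with standard parabolic subgroups (and their conjugates \emph{within} $A_{S'}$, not within $A_S$) must be tracked carefully, and where \citep{Vanderlek} is needed to know $A_{S'}$ genuinely is the Artin group on $S'$ rather than some quotient. Everything else is a direct unwinding of the development construction and \autoref{LinkDev}, so I would keep the write-up short, citing \citep[Theorem~II.12.18, Construction~II.12.24]{BH} for the stabiliser computation and the link identification.
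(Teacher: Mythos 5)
Your proposal is correct. For the first bullet it coincides with the paper's argument: identify $\Stab(\Delta_{S'})=A_{S'}$ via the development construction, translate by $g$ to get $\Stab(g\Delta_{S'})=gA_{S'}g^{-1}$, and observe that every simplex is a translate of a face of the fundamental domain (and that the correspondence need not be injective). For the second bullet you take a genuinely different route. The paper argues directly with the fundamental domain: after conjugating, both $\Delta$ and the simplex $\Delta'$ realising $P$ can be taken to lie in $K$, and the combinatorics of $K$ (faces of $K$ are the $\Delta_{S''}$, with $\Delta_{S''}\supseteq\Delta_{S'}$ exactly when $S''\subseteq S'$, matching $A_{S''}\subseteq A_{S'}$) immediately gives the containment of simplices; the converse is the same translation argument. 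You instead pass through \autoref{LinkDev}: simplices containing $\Delta_{S'}$ correspond $A_{S'}$-equivariantly to simplices of $Lk_{X_S}(\Delta_{S'})\cong X_{S'}$, and the first bullet applied to $A_{S'}$ finishes the job. This buys a cleaner conceptual reduction (parabolics of a vertex stabiliser are read off from a smaller Artin complex, exactly the mechanism the paper exploits later in its induction on $|S|$), at the cost of two small pieces of bookkeeping you should make explicit: (i) the isomorphism of \autoref{LinkDev} must be taken $A_{S'}$-equivariantly (the paper only uses this implicitly, in \autoref{lem:link_fix}), and one must check that $\Stab_{A_S}(\Delta_{S'}\ast\sigma)=\Stab_{A_{S'}}(\sigma)$ for a link simplex $\sigma$, which holds because the action is without inversions; (ii) the Artin complex $X_{S'}$ is only defined for $|S'|\geq 2$, so the cases $|S'|\leq 1$ of the second bullet must be disposed of separately (they are trivial, since the parabolic subgroups of $\langle s\rangle$ or of the trivial group are just the group itself and the trivial subgroup). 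Your closing caveat about matching the intrinsic notion of parabolic subgroup of $A_{S'}$ with the paper's relative definition is exactly the right thing to flag, and is resolved as you suggest.
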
 

\begin{proof} 
By construction, every standard parabolic subgroup $A_{S'}$ is precisely the stabiliser of some simplex $\Delta_{S'}$ lying on the fundamental domain~$K$ of $X_S$, and viceversa. Moreover, any parabolic subgroup of the form $g A_{S'} g^{-1}$ is the stabiliser of the simplex $g\cdot \Delta_{S'}$, $g\in A$. To prove the first claim, notice that any simplex of $X_S$ can be expressed as $g' \cdot \Delta'$, where $\Delta'$ is in~$K$ and $g'\in A$.  

\medskip

Let us now prove the second claim. On the one hand, let $P$ be a parabolic subgroup of $\Stab_{X_S}(\Delta)$. Up to conjugation, we can suppose that $\Delta$ lies in~$K$ of $X_S$, and that~$P$ is the stabiliser of a simplex $\Delta'$ that also lies in $K$. Now notice that, by construction of the fundamental domain, this implies that $\Delta'$ contains $\Delta$, as we desired. On the other hand, note that if~$\Delta''$ is a simplex that contains~$\Delta$, then we can find an element $g\in A_S$ such that $g\cdot \Delta''$ belongs to~$K$. Hence $g'\Stab_{X_S}(\Delta'')g'^{-1}\subseteq g'\Stab_{X_S}(\Delta)g'^{-1}$ is an inclusion of standard parabolic subgroups, as we wanted to prove.
\end{proof}

\begin{remark}
The previous correspondence is not a bijection between the parabolic subgroups of~$A_S$ and the simplices of its Artin complex, as two distinct simplices may have the same stabiliser.
\end{remark}

\medskip\noindent
Secondly, we mention the following result from systolic geometry, well-known to experts, that will be used in our proof:

\begin{lemma}\label{lemma:fixing_geodesic}
Let $G$ be a group acting without inversions on a systolic complex $Y$, and let $H$ be a subgroup of $G$. Suppose that $H$ fixes two vertices $v$ and $v'$ of $Y$. Then $H$ fixes pointwise every
combinatorial geodesic between $v$ and $v_0$.
\end{lemma}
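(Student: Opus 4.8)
The statement is a standard fact about systolic complexes, so the plan is to reduce it to the key geometric feature of systolic complexes: the uniqueness/convexity behaviour of geodesics, or more precisely the fact that geodesics between two fixed points can be ``straightened'' equivariantly. I would argue as follows. Let $\gamma$ be a combinatorial geodesic from $v$ to $v'$, and let $h \in H$. Since $h$ fixes $v$ and $v'$, the path $h\cdot\gamma$ is again a combinatorial geodesic between $v$ and $v'$ of the same length. So the task reduces to showing that \emph{any} two combinatorial geodesics between a fixed pair of vertices in a systolic complex coincide — i.e. geodesics between two points are unique — and then concluding that $h\cdot\gamma=\gamma$, hence $h$ fixes $\gamma$ pointwise; since $h\in H$ was arbitrary, $H$ fixes $\gamma$ pointwise.

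\textbf{Key step: uniqueness of geodesics.} The main point is therefore to establish (or invoke) that the $1$-skeleton of a systolic complex, with the combinatorial metric, has unique geodesics between vertices. I would appeal to the basic theory of systolic complexes of Januszkiewicz--Świątkowski \citep{JS}: systolic complexes are contractible, their $1$-skeleta are $\mathrm{CAT}(0)$-like in a combinatorial sense, and in fact they satisfy the stronger property that the metric balls are convex and geodesics between vertices are unique. Concretely, one can use the fact that a systolic complex admits no ``empty'' short cycles and that any geodesic bigon (two geodesics with common endpoints) bounds a disc diagram which, by the systolic inequality applied to its internal links, must be degenerate — forcing the two geodesics to agree. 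If one prefers a cleaner citation, the uniqueness of combinatorial geodesics in systolic complexes is recorded explicitly in the literature on systolic groups (e.g. in \citep{JS} or subsequent work), and I would cite that directly rather than reprove it.

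\textbf{Filling in the reduction.} With uniqueness of geodesics in hand, the proof is immediate: fix a combinatorial geodesic $\gamma$ between $v$ and $v'$; for every $h\in H$, $h\cdot\gamma$ is a combinatorial geodesic between $h\cdot v=v$ and $h\cdot v'=v'$, so by uniqueness $h\cdot\gamma=\gamma$ as a subcomplex. Because the action of $G$ on $Y$ is without inversions, $h$ cannot flip an edge of $\gamma$, so $h$ fixes every vertex and every edge of $\gamma$; that is, $h$ fixes $\gamma$ pointwise. As this holds for all $h\in H$, the subgroup $H$ fixes $\gamma$ pointwise, which is the claim. (I note the typo in the statement: ``$v_0$'' should read ``$v'$''.)

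\textbf{Main obstacle.} The only substantive content is the uniqueness of combinatorial geodesics between vertices of a systolic complex; everything else is bookkeeping (and the ``without inversions'' hypothesis is exactly what upgrades ``fixes $\gamma$ setwise'' to ``fixes $\gamma$ pointwise''). Since the paper explicitly flags this lemma as ``well-known to experts,'' I expect the intended proof is simply to cite the relevant statement in \citep{JS} (or a standard reference on systolic complexes) for uniqueness of geodesics and then run the one-line argument above; I would not reprove the uniqueness statement from scratch.
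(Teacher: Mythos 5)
Your argument has a genuine gap: it rests entirely on the claim that combinatorial geodesics between two vertices of a systolic complex are unique, and that claim is false. A minimal counterexample is the complex made of two $2$-simplices glued along an edge $xy$: it is systolic, and the two remaining vertices $u,w$ are joined by two distinct combinatorial geodesics $u,x,w$ and $u,y,w$. The triangulated Euclidean plane by equilateral triangles is a larger systolic example in which generic pairs of vertices have many geodesics between them. So there is no citation in \citep{JS} (or elsewhere) for uniqueness, and the reduction ``$h\cdot\gamma$ is a geodesic with the same endpoints, hence equals $\gamma$'' does not go through. Note also that the setwise-to-pointwise step is not purely about edges of $\gamma$: in the two-triangle example an automorphism fixing $u$ and $w$ could a priori swap $x$ and $y$, and it is only excluded because that swap would invert the edge $xy$.

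What is true, and what the paper actually uses, is a weaker convexity statement: if $d(v,v')=n+1$, then by \citep[Corollary~7.5 and Lemma~7.7]{JS} the ball $B(v',n)$ is convex and meets $B(v,1)$ in a single simplex, so the first edges of \emph{all} geodesics from $v$ to $v'$ span one simplex $\Delta'$. Since $H$ fixes $v$ and $v'$ it permutes the geodesics and hence stabilises $\Delta'$, and the no-inversions hypothesis forces $H$ to fix $\Delta'$ pointwise. This fixes the second vertex of an arbitrary geodesic $\gamma$, and one concludes by induction on $d(v,v')$. Your overall framing (reduce to a structural fact about the family of geodesics, then use no-inversions) is the right instinct, but the structural fact you need is ``the first steps of all geodesics span a common simplex,'' not ``the geodesic is unique.''
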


\begin{proof}
We prove the result by induction on the combinatorial distance between $v$ and $v'$. If $d(v,v')=1$, the result is immediate, as there is unique edge between~$v$ and~$v'$. Suppose by induction that the result is true for vertices at distance at most $n \geq 1$, and let $v, v'$ be two vertices of~$Y$ at distance~$n+1$. Since $Y$ is systolic, it follows from \citep[Corollary~7.5]{JS} that the combinatorial ball of radius~$n$ around~$v'$, denoted $B(v', n)$, is a convex subset of~$Y$ in the sense of \citep[Definition~7.1]{JS}. Moreover, by \citep[Lemma 7.7]{JS}, this combinatorial ball intersects the combinatorial ball $B(v,1)$ along a single simplex. This implies that there exists a simplex~$\Delta$ of~$Y$ containing~$v$, and such that every combinatorial geodesic from~$v$ to $v'$ starts with an edge of~$\Delta$. In particular, we define~$\Delta'$ as the simplex of~$Y$ spanned by the first edges of \textit{all} the combinatorial geodesics from~$v$ to~$v'$. Since~$H$ fixes~$v$ and~$v'$, $H$~preserves the set of combinatorial geodesics from~$v$ to~$v'$, and in particular $H$ stabilises~$\Delta'$. Since $G$ acts on $Y$ without inversion, it follows that~$H$ fixes~$\Delta'$ pointwise. 

Let~$\gamma$ be a combinatorial geodesic from~$v$ to~$v'$. By the above, $H$~fixes the first edge~$e$ of~$\gamma$. Let~$v_1$ be the vertex of~$e$ distinct from~$v$. We have that~$H$ fixes~$v_1$ and~$v'$, and these two vertices are at combinatorial distance $n$. By the induction hypothesis, $H$ fixes pointwise the portion of~$\gamma$ between~$v_1$ and~$v'$, and it now follows that~$H$ fixes pointwise all of~$\gamma$. This concludes the induction.
\end{proof}

\noindent We proceed now to the proof of the main theorem of this section:

\noindent
\begin{proof}[Proof of \autoref{theorem:intersection}] 
We will prove the theorem by induction on the number $n$ of generators of~$A_S$.
If $n=2$, $A_S$ is an Artin group on two generators $a, b$ and there are two cases to consider. If $m_{ab}<\infty$, then $A_S$ is a spherical  Artin group, so \autoref{item1} follows from \citep[Theorem~9.5]{CGGW} and  \autoref{item2} follows from \citep[Theorem~0.2]{Godelle}. If $m_{ab}=\infty$, then $A_S$ is a free group on two generators $a, b$. Moreover, the proper parabolic subgroups are either trivial or infinite cyclic. Since the action of $A_S$ on the Bass-Serre tree associated to the splitting $\langle a \rangle * \langle b \rangle$ has trivial edge stabilisers, it follows that two distinct proper parabolic subgroups intersect trivially. Thus, \autoref{item1} and  \autoref{item2} follow immediately.

Let us now assume that the result is known for Artin groups of large type on at most $n$ generators with $n\geq 2$,
and let $A_S$ be an Artin group of large type on $n + 1$ generators. Let $X_S$ be its associated Artin complex.

\bigskip\noindent
\emph{Claim 1.}  Let $e_1,\dots,  e_k$ be a combinatorial path $p$ in $X_S$. Then there exists a simplex $\Delta$ of $X_S$ containing the edge $e_k$ such that

$$\bigcap_{1\leq i \leq k} \Stab_{X_S}(e_i) = \Stab_{X_S} (\Delta).$$

\noindent
\emph{Proof of Claim 1.} We will prove the claim by induction on $k$. If $k=1$, $p$ is just the edge $e_1$ and the proof is trivial. Now suppose that the claim is true for $k$ and let us prove it for $k+1$. By applying the induction hypothesis to the subpath $e_1, \ldots, e_k$, we will then have

$$\bigcap_{1\leq i \leq k+1} \Stab_{X_S}(e_i) = \Stab_{X_S} (\Delta') \cap \Stab_{X_S}(e_{k+1}),$$ where $\Delta'$ is a simplex containing the edge $e_k$. Let~$v$ be a vertex contained in both~$e_k$ and~$e_{k+1}$. By, \autoref{lemma:geometric_parabolics}, this means that both $\Stab_{X_S}(\Delta')$ and $\Stab_{X_S}(e_{k+1})$ are parabolic subgroups of $\Stab_{X_S}(v)$.  Also, up to conjugacy, $\Stab(v)$ is an Artin group on $n$~generators. Therefore, by the induction hypothesis on~$n$, $\Stab_{X_S} (\Delta') \cap \Stab_{X_S}(e_{k+1})$ is a parabolic subgroup of $\Stab(v)$ contained in $\Stab_{X_S}(e_{k+1})$, so it is a parabolic subgroup of $\Stab_{X_S}(e_{k+1})$. Geometrically, $\Stab_{X_S} (\Delta') \cap \Stab_{X_S}(e_{k+1})$ is the stabiliser of some simplex containing~$e_{k+1}$. This finishes the proof of Claim~1.

\bigskip\noindent
\emph{Claim 2.}  Let $\Delta_1$ and $\Delta_2$ be two simplices of $X_S$. Then there exists a simplex~$\Delta$ of~$X_S$ containing~$\Delta_2$ such that
$\Stab_{X_S}(\Delta_1) \cap \Stab_{X_S}(\Delta_2) = \Stab_{X_S} (\Delta).$

\medskip
\noindent
\emph{Proof of Claim 2.} 
Let $\Delta'$ be any simplex of $X_S$ and let $V_{\Delta'}$ be the set of vertices of $\Delta'$. As the action of $A_S$ on $X_S$ is without inversions, we have that $\Stab_{X_S}(\Delta')= \cap_{w\in V_{\Delta'}} \Stab(w)$. Define a combinatorial path $p$ that is the concatenation of the three following paths: a combiantorial path $p_1$ that travels along every vertex in $ V_{\Delta_1}$; a combinatorial geodesic $p_2$ between the endpoint of $p_1$ and $ V_{\Delta_2}$; and a combinatorial path that starts in the endpoint of $p_2$ and travels along every vertex in $ V_{\Delta_2}$. Denote the endpoint of $p$ by $v$ and let $E_p$ be the set of edges of~$p$. Then, by Claim~1 and \autoref{lemma:fixing_geodesic}, 

$$\Stab_{X_S}(\Delta_1) \cap \Stab_{X_S}(\Delta_2)= \bigcap_{w\in {V_{\Delta_1} \cup V_{\Delta_2}} } \Stab_{X_S}(w)= \bigcap_{e\in E_p} \Stab_{X_S}(e)= \Stab_{X_S}(\Delta),$$ for some simplex $\Delta$ containing $v$. Now we need to show that $\Delta$ contains also $\Delta_2$. Notice that $\Stab_{X_S}(\Delta_2)$ contains $\Stab_{X_S}(\Delta)$ and both $\Stab_{X_S}(\Delta_2)$ and $\Stab_{X_S}(\Delta)$ are parabolic subgroups of $\Stab_{X_S}(v)$. This group is, up to conjugacy, an Artin group on $n$ generators. So by using the induction hypothesis on $n$, $\Stab_{X_S}(\Delta)$ is a parabolic subgroup of $\Stab_{X_S}(\Delta_2)$, which means that we can choose~$\Delta$ to contain~$\Delta_2$. This finishes the proof of Claim~2.

\bigskip
In particular, note that Claim 2 together with \autoref{lemma:geometric_parabolics} implies that the parabolic subgroups of $A_S$ are stable under intersection, proving \autoref{item1}.

\bigskip

\noindent \emph{Claim 3.}  For every pair of simplices $\Delta_1$ and $\Delta_2$ of $X_S$ such that $\Stab_{X_S}(\Delta_1)\subseteq \Stab_{X_S}(\Delta_2)$, there exists a simplex $\Delta$ of $X_S$ containing $\Delta_2$ such that
$\Stab_{X_S}(\Delta_1) = \Stab_{X_S} (\Delta).$

\medskip
\noindent
\emph{Proof of Claim 3.} Just notice that $\Stab_{X_S}(\Delta_1)= \Stab_{X_S}(\Delta_1) \cap \Stab_{X_S}(\Delta_2)$, so by Claim 2 there is a simplex $\Delta$ containing $\Delta_2$ such that $\Stab_{X_S}(\Delta_1)=\Stab_{X_S}(\Delta)$. This finishes the proof of the claim.

\bigskip 

We now explain why this claim implies that $A_S$ satisfies \autoref{item2}. Let $P_1$ and $P_2$ be two parabolic subgroups of~$A_S$ such that $P_1 \subseteq P_2$. By \autoref{lemma:geometric_parabolics} there are simplices $\Delta_1$ and $\Delta_2$ of~$A_S$ such that $P_1=\Stab_{X_S}(\Delta_1)$ and $P_2=\Stab_{X_S}(\Delta_2)$. By Claim 3, there exists a simplex~$\Delta$ of~$X_S$ containing~$\Delta_2$ such that
$\Stab_{X_S}(\Delta_1) = \Stab_{X_S} (\Delta).$ Again by \autoref{lemma:geometric_parabolics}, this means that~$P_1$ is a parabolic subgroup of~$P_2$, as we wanted to prove.
\end{proof}

\begin{remark}
Notice that the only place where the systolic geometry was used in the previous proof is the following argument coming from \autoref{lemma:fixing_geodesic}: If an element fix two simplices, then it fixes pointwise a combinatorial path between these simplice. Therefore, a strong enough requirement to prove \autoref{theorem:intersection} for any Artin group~$A_S$ is to have this fixing-path condition in its Artin complex~$X_S$. 
\end{remark}

\begin{question*}
Let $X_S$ be the Artin complex of any Artin group $A_S$ and let $g\in A_S$ be an element fixing $\Delta_1$ and $\Delta_2$.  Is there a combinatorial path between $\Delta_1$ and $\Delta_2$ fixed by $g$ pointwise?
\end{question*}

\bigskip
We can generalise some interesting results concerning parabolic results that were previously shown for spherical Artin groups \citep[Section~10]{CGGW}: 
\begin{corollary}\label{cor:parabolic_closure}
	Let $A_S$ be an Artin group of large type.  Then, an arbitrary intersection of parabolic subgroup of $A_S$ is a parabolic subgroup. In particular,
	%and let $\mathcal{P}$ be the set of its parabolic subgroups. Let $\pi$ be a predicate $\mathcal{P}$ such that $\pi(P_1)\cap \pi(P_2)= \pi(P_1 \cap P_2)$ for all $P_1,P_2 \in \mathcal{P}$.  Let also $\mathcal{P}_\pi$ be the set of all parabolic subgroups of $A_S$ satisfying  $\pi$. If $\mathcal{P}_\pi$ is non empty, then it contains a minimal parabolic subgroup (with respect to the inclusion), namely $Q= \cap_{P\in \mathcal{P}_\pi} P$. In particular
	
	\begin{enumerate} 
	
		\item  For a subset $B\subset A_S$, there is a unique minimal parabolic subgroup of $A_S$ (with respect to the inclusion) containing $B$ ;
		
		\item The set of parabolic subgroups of $A_S$ is lattice with respect to the inclusion.
	\end{enumerate}

\end{corollary}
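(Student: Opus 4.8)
The plan is to derive Corollary~\ref{cor:parabolic_closure} essentially for free from \autoref{theorem:intersection}, so the work consists of explaining how the finite-intersection statement upgrades to arbitrary intersections and why the two numbered consequences follow. First I would observe that although \autoref{theorem:intersection}(\ref{item1}) only addresses the intersection of \emph{two} parabolic subgroups, a trivial induction extends it to any \emph{finite} collection: if $P_1, \ldots, P_n$ are parabolic, then $P_1 \cap \cdots \cap P_n = (P_1 \cap \cdots \cap P_{n-1}) \cap P_n$ is the intersection of a parabolic subgroup (by induction) with a parabolic subgroup, hence parabolic.

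The key step is to pass from finite to arbitrary intersections, and here I would use a Noetherian-type argument on the poset of parabolic subgroups. Let $\{P_i\}_{i \in I}$ be an arbitrary family of parabolic subgroups and set $Q \coloneqq \bigcap_{i \in I} P_i$. Consider the collection of all \emph{finite} subintersections $P_{i_1} \cap \cdots \cap P_{i_n}$; by the previous paragraph each of these is a parabolic subgroup, and they are partially ordered by inclusion with $Q$ as a lower bound for all of them. I claim this collection has a minimal element. Indeed, a parabolic subgroup of type $S'$ has a well-defined ``rank'' $|S'|$ (this is well-defined since conjugation preserves type), and each strict inclusion $P_{i_1}\cap\cdots\cap P_{i_{n+1}} \subsetneq P_{i_1}\cap\cdots\cap P_{i_n}$ of parabolic subgroups forces a \emph{strict} drop in rank: this is exactly \autoref{theorem:intersection}(\ref{item2}), which tells us the smaller one is a parabolic subgroup \emph{of} the larger, and a proper parabolic subgroup of an Artin group $A_{S'}$ (realised again as an Artin group via \cite{Vanderlek}) has strictly fewer generators. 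Since rank takes values in $\{0, 1, \ldots, |S|\}$, a descending chain of finite subintersections stabilises; picking one realising the minimum, call it $P_{i_1} \cap \cdots \cap P_{i_n} =: P$, minimality means $P \cap P_j = P$ for every $j \in I$, i.e.\ $P \subseteq P_j$ for all $j$, so $P \subseteq Q$. The reverse inclusion $Q \subseteq P$ is automatic, hence $Q = P$ is parabolic.

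For consequence (1), given $B \subseteq A_S$, the whole group $A_S$ is itself a (standard) parabolic subgroup containing $B$, so the family of parabolic subgroups containing $B$ is nonempty; its intersection is parabolic by the previous paragraph, contains $B$, and is by construction the unique minimal such — this is the parabolic closure of $B$. For consequence (2), the set $\mathcal{L}$ of parabolic subgroups of $A_S$ is a poset under inclusion in which any two elements $P_1, P_2$ have a meet, namely $P_1 \wedge P_2 = P_1 \cap P_2$ (parabolic by \autoref{theorem:intersection}(\ref{item1})), and a join, namely $P_1 \vee P_2$ defined as the parabolic closure of the subgroup $\langle P_1, P_2 \rangle$ (equivalently, the intersection of all parabolic subgroups containing both, which is parabolic by consequence (1)); this is the least upper bound since any parabolic subgroup containing both $P_1$ and $P_2$ contains $\langle P_1, P_2\rangle$ and hence its parabolic closure. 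Thus $\mathcal{L}$ is a lattice.

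I do not expect a genuine obstacle here — the corollary is a formal consequence of \autoref{theorem:intersection} — but the one point requiring care is the finite-to-infinite step: one must justify that infinite descending chains of parabolic subgroups terminate, and the cleanest justification is the rank/type argument above, which crucially invokes part (\ref{item2}) of the theorem (not just part (\ref{item1})) to guarantee that each strict inclusion in the chain is an inclusion \emph{of} parabolic subgroups and therefore strictly decreases the number of generators. Without (\ref{item2}) one would not immediately know the chain is finite.
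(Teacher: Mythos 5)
Your proposal is correct and follows essentially the same route as the paper: reduce to finite intersections via \autoref{theorem:intersection}(\ref{item1}), then use part (\ref{item2}) to show that strict inclusions of parabolic subgroups strictly decrease rank (the paper phrases this as the dimension of the corresponding fixed simplex strictly increasing, via Claim~3), so the descent terminates and some finite subintersection already equals the full intersection. The only cosmetic difference is that the paper first observes the family is countable and runs the descent along an enumerated nested chain $Q_1 \supseteq Q_2 \supseteq \cdots$, whereas you take a finite subintersection of minimal rank directly; both hinge on exactly the same two ingredients.
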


The strategy will be the same standard argument used in \citep[Proposition~10.1]{CGGW}. We can find the generalised FC version of the first statement for spherical parabolic subgroups in \citep[Corollary~3.2]{Rose}.

\begin{proof}
	 Let $\mathcal{P}$ be an arbitrary set of parabolic subgroups of $A_S$ and let $Q= \cap_{P\in\mathcal{P}} P$. $Q$ is contained in every parabolic subgroup in $\mathcal{P}$, so by \autoref{theorem:intersection}, we just need to prove that $Q$ is equivalent to a finite intersection of parabolic subgroups. Notice that every parabolic subgroup is expressed as the conjugate of some standard parabolic subgroup. Since~$A_S$ is a countable group and standard parabolic subgroups of $A_S$ are finite, the set of parabolic subgroups of $A_S$ is countable. In particular, $\mathcal{P}$ is countable. Enumerate the elements in $\mathcal{P}=\{P_1, P_2, P_3,\dots\}$ and let $$Q_m = \bigcap_{1\leq i\leq m} P_i.$$
	 By \autoref{theorem:intersection}, all $Q_m$'s belong to $\mathcal{P}$. As $Q=\cap_{i\in \mathbb{N}} Q_m$, we need to show that the set  $\{Q_m\, | \, m\in  \mathbb{N} \}$ is finite. 
	 
	 \medskip \noindent Let $X_S$ be the Artin complex of $A_S$.
	 Notice that we have a descending chain $$Q_1\supseteq Q_2\supseteq Q_3 \supseteq \dots $$ By doing an induction on the Claim 3 in the proof of \autoref{theorem:intersection}, one can easily see that  if $\Stab_{X_S}(\Delta_1) \supsetneq \Stab_{X_S}(\Delta_2) \supsetneq  \Stab_{X_S}(\Delta_3) \dots $, the dimension of~$\Delta_i$ has to be bigger than the dimension of~$\Delta_{i-1}$. As the dimension of $X_S$ is finite, the chain cannot be infinite. Therefore, $Q$~is the minimal parabolic subgroup on $\mathcal{P}$.
	 
	 \medskip\noindent To see the first statement, just assume that $\mathcal{P}=\{P \,|\,B \subset P\}$. For the second statement let~$P_1$ and $P_2$ be any two parabolic subgroups of $A_S$. We need a maximal parabolic subgroup $R_1$ contained in $P_1$ and $P_2$ and a minimal parabolic subgroup $R_2$ containing $P_1$ and $P_2$. By all the previous discussion, $R_1=P_1 \cap P_2$ and $R_2$ is the minimal parabolic subgroup in $\mathcal{P}$ when $\mathcal{P}=\{P\,|\,P_1 \cup P_2 \subseteq P\}$.	 
\end{proof}

%
%The minimal parabolic subgroup~$P_g$ containing an element~$g$ is called \textbf{parabolic closure} of~$g$. This subgroup, as proved for spherical Artin groups \citep[Lemma~8.1]{CGGW}, has a good behaviour under conjugacy: 
%
%\begin{proposition}\label{proposition:conjugacy_Pg}
%Lat $A_S$ be any Artin group. If for every $g\in A_S$ there is a minimal parabolic subgroup~$P_g$ (with respect to the inclusion) containing $g$, then $P_{\alpha^{-1} g \alpha} = \alpha^{-1} P_g \alpha$. 
%
%In particular, if $a$ and $b$ are conjugate, their parabolic closures have the same dimension (they are the stabilizers of simplices of $X_S$ with the same dimension).
%\end{proposition}
%
%\begin{proof}
%It is obvious that $\alpha^{-1} P_g \alpha$ contains $\alpha^{-1} g \alpha$. We need to prove that this parabolic subgroup is the minimal one containing $\alpha^{-1} g \alpha$.    Suppose that there is a parabolic subgroup $Q$ containing $\alpha^{-1} g \alpha$. As $\alpha Q \alpha^{-1}$ contains $g$, $P_g \subseteq \alpha Q \alpha^{-1}$.  Therefore, $ \alpha^{-1} P_g \alpha \subseteq Q$.
%\end{proof}

\section{Normalisers and fixed-point sets of parabolic subgroups}\label{section5}

The goal of this section is to prove  \myref{thm:main_normalizer}{Theorem}.
%\begin{theorem}\label{thm:normalizer}
%	Let $A_S$ be a large-type Artin group, and let $P$ be a parabolic subgroup corresponding to a coset of some $A_{S'}$ for $S' \subset S$.
%	\begin{itemize}
%		\item If $|S'|\geq 2$, then $N(P)=P.$
%		\item If $|S'| = 1$, then $N(P)$ splits as a direct product of the form
%		$$N(P)= P \times F,$$
%		where $F$ is a finitely-generated  free group.
%	\end{itemize}
%\end{theorem}
%
%An explicit basis of this free group $F$ will be given in .... ???? 
In all this section we consider an Artin group~$A_S$ with $|S|\geq 3$. For a parabolic subgroup~$P$ of~$A_S$, we denote by $\Fix(P)$ (or  $\Fix_{X_S}(P)$ if we wish to highlight the ambient complex) the fixed-point set of $P$ in $X_S$. Since $A_S$ acts on $X_S$ without inversions, $\Fix(P)$ is a subcomplex of $X_S$. The connection between the normaliser $N(P)$ of a parabolic subgroup $P$ and its fixed-point set $\Fix(P)$  is given by the following:

\begin{lemma} \label{LemmaNorm} Let $P$ be a parabolic subgroup of $A_S$. Then the normaliser $N(P) $ of $P$  satisfies $$N(P) = \Stab(\Fix(P)).$$
	In addition, an element of $A_S$ belongs to $N(P)$ if and only if it sends some maximal simplex of $\Fix(P)$ to some maximal simplex of $\Fix(P)$.
\end{lemma}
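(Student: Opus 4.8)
The plan is to isolate one auxiliary fact from which both halves of the statement follow in a few lines: every \emph{maximal} simplex $\sigma$ of $\Fix(P)$ satisfies $\Stab_{X_S}(\sigma)=P$. First I would record the preliminaries. By \autoref{lemma:geometric_parabolics} we may write $P=\Stab_{X_S}(\Delta)$ for some simplex $\Delta$, and since $A_S$ acts on $X_S$ without inversions this stabiliser fixes $\Delta$ pointwise; hence $\Delta\subseteq\Fix(P)$, so $\Fix(P)$ is a non-empty subcomplex of the finite-dimensional complex $X_S$, and every simplex of $\Fix(P)$ lies in a maximal one.

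For the auxiliary fact: if $\sigma$ is maximal in $\Fix(P)$ then $P$ fixes $\sigma$ pointwise, so $P\subseteq\Stab_{X_S}(\sigma)$, and $\Stab_{X_S}(\sigma)$ is again a parabolic subgroup by \autoref{lemma:geometric_parabolics}. The second statement of \autoref{theorem:intersection} then says that $P$ is a parabolic subgroup of $\Stab_{X_S}(\sigma)$, so \autoref{lemma:geometric_parabolics} provides a simplex $\tau\supseteq\sigma$ with $\Stab_{X_S}(\tau)=P$. Using the no-inversion property once more, $\tau\subseteq\Fix(P)$, and the maximality of $\sigma$ forces $\tau=\sigma$; thus $\Stab_{X_S}(\sigma)=\Stab_{X_S}(\tau)=P$.

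Granting this, I would conclude as follows. For $N(P)=\Stab(\Fix(P))$: the inclusion $N(P)\subseteq\Stab(\Fix(P))$ is immediate from $g\cdot\Fix(P)=\Fix(gPg^{-1})$; and for the reverse inclusion, given $g\in\Stab(\Fix(P))$ I pick a maximal simplex $\sigma$ of $\Fix(P)$, observe that $g\sigma$ is again a maximal simplex of $g\cdot\Fix(P)=\Fix(P)$, and apply the auxiliary fact to get $gPg^{-1}=g\,\Stab_{X_S}(\sigma)\,g^{-1}=\Stab_{X_S}(g\sigma)=P$, i.e. $g\in N(P)$. For the last sentence of the lemma: if $g\in N(P)=\Stab(\Fix(P))$ then $g$ restricts to a simplicial automorphism of $\Fix(P)$ and hence carries any maximal simplex of $\Fix(P)$ to a maximal simplex of $\Fix(P)$, and there is at least one since $\Fix(P)\neq\emptyset$; conversely, if $g$ sends a maximal simplex $\sigma$ of $\Fix(P)$ to a maximal simplex $\sigma'$ of $\Fix(P)$, the auxiliary fact gives $gPg^{-1}=g\,\Stab_{X_S}(\sigma)\,g^{-1}=\Stab_{X_S}(\sigma')=P$, so $g\in N(P)$.

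The only step with real content is the auxiliary fact, and within it the decisive move is the appeal to \autoref{theorem:intersection}: this is what upgrades the group-theoretic inclusion $P\subseteq\Stab_{X_S}(\sigma)$ to the geometric statement that $P$ stabilises a simplex \emph{containing} $\sigma$, after which the maximality of $\sigma$ finishes the argument. The rest is bookkeeping with stabilisers and with the no-inversion property of the action.
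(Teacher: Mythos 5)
Your proof is correct, and it follows the same basic strategy as the paper (reduce everything to a simplex of $\Fix(P)$ whose stabiliser is exactly $P$), but it inserts one genuinely new ingredient: the auxiliary fact that \emph{every} maximal simplex of $\Fix(P)$ has stabiliser exactly $P$, proved via \autoref{theorem:intersection}(2) and \autoref{lemma:geometric_parabolics}. The paper instead simply takes "a maximal simplex $\Delta\subseteq\Fix(P)$ in the sense that $\Stab(\Delta)=P$" (whose existence is immediate from \autoref{lemma:geometric_parabolics}) and deduces the one-sided inclusion $g^{-1}Pg\subseteq\Stab(\Delta)=P$, leaving implicit both the upgrade to equality (which needs the same argument applied to $g^{-1}$, or your auxiliary fact) and the entire "in addition" sentence of the statement. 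Your version buys a cleaner conclusion: since $g$ permutes the maximal simplices of $\Fix(P)$ and each of them has stabiliser $P$, you get $gPg^{-1}=\Stab(g\sigma)=P$ directly, and the "if and only if" clause about maximal simplices falls out for free — this is the part the paper's proof does not really address, and your auxiliary fact is exactly what is needed to identify "maximal for inclusion in $\Fix(P)$" with "stabiliser equal to $P$". The cost is that your route invokes \autoref{theorem:intersection} and hence the large-type hypothesis (harmless here, since that is the standing assumption of the section and there is no circularity: \autoref{theorem:intersection} is proved independently in Section~4), whereas the paper's computation of $N(P)=\Stab(\Fix(P))$ itself uses nothing beyond the action being without inversions.
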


\begin{proof} $(\subseteq)$ Let $g \in N(P)$, that is, $g P = P g$, and let $v \in \Fix(P)$. Then
	$$P \cdot (g \cdot v) = g \cdot (P \cdot v) = g \cdot v.$$
	In particular, $g \cdot v \in \Fix(P)$ and thus $g \in \Stab(\Fix(P))$.
	
	$(\supseteq)$ Let $g \in \Stab(\Fix(P))$ and let $\Delta \subseteq \Fix(P)$ be a maximal simplex in the sense that $\Stab(\Delta) = P$. Then $g \cdot \Delta \in \Fix(P)$, thus
	$$P \cdot (g \cdot \Delta) = g \cdot \Delta.$$
	In particular, $g P g^{-1}$ fixes $\Delta$, hence $g P g^{-1} \subseteq P$. In other words, $g \in N(P)$.
\end{proof}

%\begin{proof} \textcolor{red}{(Alternative proof)} Note that:
%	\begin{align*}
%	g \in Stab(Fix(A_{S'})) &\Leftrightarrow \forall p \in Fix(A_{S'}), g \cdot p \in Fix(A_{S'}) \\
%	&\Leftrightarrow \forall p \in Fix(A_{S'}), \forall h \in A_{S'}, h \cdot (g \cdot p) = g \cdot p \\
%	&\Leftrightarrow \forall p \in Fix(A_{S'}), \forall h \in A_{S'}, (g^{-1} h g) \cdot p = p \\
%	&\Leftrightarrow \forall p \in Fix(A_{S'}), (g^{-1} A_{S'} g) \cdot p = p \\
%	&\overset{(*)} \Leftrightarrow g^{-1} A_{S'} g \subseteq A_{S'} \\
%	&\Leftrightarrow g^{-1} A_{S'} g = A_{S'} \\
%	&\Leftrightarrow g \in N_{A_S}(A_{S'}),
%	\end{align*}
%	where $(*)$ comes from the fact that $A_{S'}$ is the biggest subset of $A_S$ to fix all the element of $Fix(A_{S'})$ \textcolor{red}{(say why)}.
%\end{proof}

The key geometric result to prove \myref{thm:main_normalizer}{Theorem} by means of studying fixed-point sets is the following: 

\begin{proposition} \label{PropFix}
		Let $A_S$ be a large-type Artin groups, and let $P$ be a  parabolic subgroup of $A_S$ of type $S'$.
	\begin{itemize}
		\item If $|S'|\geq 2$, then $\Fix(P)$ is a single simplex.
		\item If $|S'|=1$, then $\Fix(P)$ is a subcomplex whose dual graph is a simplicial tree (see \myref{def:dual}{Definition} for the terminology).
	\end{itemize}
\end{proposition}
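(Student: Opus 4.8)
The plan is to exploit the two structural facts already in place: the link of a simplex of type $T$ in $X_S$ is again an Artin complex, $X_T$ (\autoref{LinkDev}), and $X_S$ is systolic (\autoref{MainThm}), so \autoref{lemma:fixing_geodesic} applies. Conjugating, we may assume $P=A_{S'}$ is standard, so $\Fix(P)$ contains the simplex $\Delta_{S'}$ and all its faces. I would first record two preliminary facts. (i) \emph{Connectedness}: if $P$ fixes two vertices $v,v'$ then by \autoref{lemma:fixing_geodesic} it fixes pointwise every combinatorial geodesic from $v$ to $v'$, so any two fixed vertices lie in a common fixed path. (ii) \emph{Dimension bound}: if a simplex $\sigma$ of type $T$ lies in $\Fix(P)$ then $A_{S'}\subseteq\Stab(\sigma)$, hence by \autoref{theorem:intersection} $A_{S'}$ is a parabolic subgroup of $\Stab(\sigma)$; since conjugate standard parabolics of a large-type Artin group have the same number of generators \citep{Paris}, this forces $|T|\geq|S'|$, i.e. $\dim\sigma\leq\dim\Delta_{S'}$, with equality exactly when $\Stab(\sigma)=P$.

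I would then induct on $|S|\geq 3$, the engine being: for a simplex $\tau$ of type $T$ fixed by $P$, the group $P$ acts on $Lk_{X_S}(\tau)\cong X_T$ as a parabolic subgroup of $\Stab(\tau)\cong A_T$, so the part of $\Fix(P)$ incident to $\tau$ is the fixed-point set of a parabolic subgroup inside the smaller complex $X_T$ — governed by the inductive hypothesis when $|T|<|S|$, and, in the extreme case $\tau=\Delta_{S'}$ (so $T=S'$), by the direct remark that $P=A_{S'}$ fixes \emph{no} simplex of $X_{S'}$, since every simplex of $X_{S'}$ has a \emph{proper} parabolic subgroup of $A_{S'}$ as its stabiliser. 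For $|S'|\geq 2$ this closes the argument: were $\Fix(P)$ strictly larger than $\Delta_{S'}$, connectedness would produce a fixed vertex $w\notin\Delta_{S'}$ adjacent to a vertex $u$ of $\Delta_{S'}$, hence $w\in Lk_{X_S}(u)\cong X_{S''}$ with $S'\subseteq S''$ and $|S''|=|S|-1$; the case $S''=S'$ contradicts the remark above, and if $S'\subsetneq S''$ then by induction $\Fix_{X_{S''}}(A_{S'})$ is a single simplex, which pins $w$ inside a face of $\Delta_{S'}$ — a contradiction. (The base case $|S|=3$, $|S'|=2$ is only the remark, with $Lk_{X_S}(\Delta_{S'})\cong X_{S'}$ dihedral.) Hence $\Fix(P)=\Delta_{S'}$.

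For $|S'|=1$, write $P=\langle a\rangle$, so $\Fix(\langle a\rangle)=\Fix(a)$. Using the total-exponent homomorphism $A_S\to\mathbb{Z}$, a type-$\{s\}$ simplex $g\Delta_{\{s\}}$ is $\langle a\rangle$-fixed iff $gsg^{-1}=a$, and then its stabiliser equals $\langle a\rangle$; together with the dimension bound, the top-dimensional simplices (the candidate chambers) of $\Fix(a)$ are exactly these, all of dimension $|S|-2$. That $\Fix(a)$ is a chamber complex — every fixed simplex $\tau$ is contained in such a chamber — follows from the inductive engine, since $\Fix_{X_T}(\langle a\rangle)$ is non-empty and contains a type-$\{a\}$ vertex, i.e. a fixed chamber through $\tau$; gallery-connectedness is then a local-to-global consequence of connectedness together with the gallery-connectedness of $\Fix$ in the smaller links.

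What remains — and this is the crux — is that the dual graph $\mathcal{T}$ of $\Fix(a)$ (in the sense of \ref{def:dual}) is a tree. As $\mathcal{T}$ is connected, it suffices to exclude cycles, i.e. closed non-backtracking galleries $C_0,w_0,C_1,w_1,\dots,C_{k-1},w_{k-1},C_0$ with $k\geq 3$ of distinct fixed chambers $C_i$ and distinct fixed codimension-one faces $w_i$. The plan is to trace such a gallery out as a loop in the systolic complex $X_S$ and contradict $6$-systolicity: around a panel $w_i$, of type $\{a,x_i\}$, the link $Lk_{X_S}(w_i)\cong X_{\{a,x_i\}}$ is a dihedral Artin complex of systole $2m_{ax_i}\geq 6$, and the chambers through $w_i$ are exactly the $\langle a\rangle$-fixed type-$\{a\}$ vertices of this link; the key local input is that two distinct such vertices are far apart in $X_{\{a,x_i\}}$, so the traced loop is "full enough" that it cannot bound, against a combinatorial Gauss--Bonnet / minimal disk-diagram argument as in \citep{JS}. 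Equivalently, I would aim to prove directly that $\Fix(a)$ is a \emph{convex} subcomplex of $X_S$, hence itself systolic, and then read off tree-ness of $\mathcal{T}$ from this together with the description of its chambers and panels. \textbf{Turning the non-positive curvature of $X_S$ into the tree property is the main obstacle}; everything else is bookkeeping with \autoref{LinkDev}, \autoref{lemma:fixing_geodesic}, and \autoref{theorem:intersection}.
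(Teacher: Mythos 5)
Your treatment of the case $|S'|\geq 2$ is correct and essentially the paper's own argument: the paper likewise reduces to links via \autoref{LinkDev} and \autoref{lem:link_fix}, uses \autoref{lemma:fixing_geodesic} for convexity, and inducts (on $|S|-|S'|$ rather than on $|S|$, via the elementary observation that a subcomplex all of whose links are simplices or empty is itself a simplex); your ``extreme case'' remark plays the role of the paper's observation that a maximal proper parabolic subgroup cannot fix an edge.

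The case $|S'|=1$, however, contains a genuine gap which you have flagged yourself: the tree property of the dual graph is identified as ``the crux'' but neither of your two sketched strategies is carried out. Note that two inputs of different natures are needed. The first is simple connectedness of $\Fix(P)$, and this does come from your ``convexity'' alternative, exactly as in the paper: by \autoref{lemma:fixing_geodesic} the subcomplex $\Fix(P)$ is full and contains every geodesic between its vertices, hence is $3$-convex, hence locally $3$-convex \citep[Fact~3.3.1]{JS}, hence contractible \citep[Lemma~7.2]{JS}. But simple connectedness of a gallery-connected chamber complex does \emph{not} imply that its dual graph is a tree: the cone over a hexagon is a simply connected $2$-dimensional gallery-connected chamber complex whose dual graph is a $12$-cycle. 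The indispensable extra ingredient is that every codimension-$1$ face of $\Fix(P)$ that is not a free face \emph{separates} $\Fix(P)$; only then does simple connectedness force the dual graph to be a tree.

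The paper obtains this separation property by a local-to-global induction on links (\autoref{lem:ind_chamber} combined with \autoref{lem:link_fix}), and the base case $|S|=3$ is not extracted from systolicity of $X_S$ at all: for a rank-$3$ large-type Artin group the Artin complex coincides with the Deligne complex and $\Fix(\langle a\rangle)$ is one of the standard trees of \citep[Definition~4.1]{MP1}, hence is literally a tree, in which every non-free vertex separates. This imported result is the engine your proposal lacks. Your Gauss--Bonnet idea around a closed gallery could plausibly be made to substitute for it --- the relevant local fact being that two distinct $\langle a\rangle$-fixed type-$\{a\}$ vertices of a dihedral link $X_{\{a,x\}}$ are at distance at least $m_{ax}\geq 3$, so a closed gallery traces a loop whose ``angles'' are all large --- but as written this is a programme, not a proof, and the second bullet of the proposition is not yet established.
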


\noindent

The proof of this proposition will be split into two cases. We first mention a useful observation that will allow for proofs by induction: 

\begin{lemma}\label{lem:link_fix}
	For a simplex $\Delta$ of $\Fix(P)$ 
%	corresponding to the  coset $A_{S''}$
	of type $S''$, the link $Lk_{\Fix(P)}(\Delta)$ is 
%	$A_{S''}$-equivariantly 
	isomorphic to~$\Fix_{X_{S''}}(P)$.
\end{lemma}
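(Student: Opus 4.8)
The plan is to upgrade the isomorphism $Lk_{X_S}(\Delta)\cong X_{S''}$ of \autoref{LinkDev} to an isomorphism between the corresponding fixed-point subcomplexes. First I would reduce to the case where $\Delta$ is the standard simplex $\Delta_{S''}$ sitting in the fundamental domain: every simplex of type $S''$ has the form $g\Delta_{S''}$, and left translation by $g^{-1}$ is a simplicial automorphism of $X_S$ that carries $\Delta$ to $\Delta_{S''}$, $\Fix(P)$ to $\Fix(g^{-1}Pg)$, and $Lk_{X_S}(\Delta)$ to $Lk_{X_S}(\Delta_{S''})$. So I may assume $\Delta=\Delta_{S''}$, and since $P$ fixes $\Delta$ this gives $P\leq\Stab_{X_S}(\Delta_{S''})=A_{S''}$.

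Next I would record two elementary facts. (a) Since $A_S$ acts on $X_S$ without inversions, $\Fix(P)$ is a \emph{full} subcomplex of $X_S$: a simplex all of whose vertices are fixed by $P$ is itself fixed pointwise. Hence, for $\Delta_{S''}\subseteq\Fix(P)$, one has $Lk_{\Fix(P)}(\Delta_{S''})=\{\sigma\in Lk_{X_S}(\Delta_{S''}) : \sigma*\Delta_{S''}\subseteq\Fix(P)\}$, and $\sigma*\Delta_{S''}\subseteq\Fix(P)$ holds if and only if $P\leq\Stab_{X_S}(\sigma*\Delta_{S''})$ (using once more that the action is without inversions). (b) The isomorphism $\Phi\colon Lk_{X_S}(\Delta_{S''})\xrightarrow{\ \cong\ } X_{S''}$ produced in the proof of \autoref{LinkDev} from \citep[II.12, Construction~12.24]{BH} is equivariant for the action of $\Stab_{X_S}(\Delta_{S''})=A_{S''}$ (acting in the standard way on $X_{S''}$); in particular, for $\sigma\in Lk_{X_S}(\Delta_{S''})$ and $\sigma'\coloneqq\sigma*\Delta_{S''}$ one has $\Stab_{X_S}(\sigma')=\Stab_{A_{S''}}(\sigma)=\Stab_{X_{S''}}(\Phi(\sigma))$, where the first equality uses $\Stab_{X_S}(\sigma')\subseteq\Stab_{X_S}(\Delta_{S''})=A_{S''}$.

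With these in hand the argument is a short chain of equivalences: for $\sigma\in Lk_{X_S}(\Delta_{S''})$,
$$\sigma\in Lk_{\Fix(P)}(\Delta_{S''})\iff P\leq\Stab_{X_S}(\sigma')\iff P\leq\Stab_{X_{S''}}(\Phi(\sigma))\iff \Phi(\sigma)\in\Fix_{X_{S''}}(P),$$
the last equivalence using that $A_{S''}$ acts on $X_{S''}$ without inversions. Therefore $\Phi$ restricts to a simplicial isomorphism $Lk_{\Fix(P)}(\Delta_{S''})\cong\Fix_{X_{S''}}(P)$, and combined with the reduction of the first paragraph this proves the lemma.

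I expect the only delicate point to be fact (b): one must check that the identification of the link with $X_{S''}$ coming from the Bridson--Haefliger ``link of a development'' construction intertwines the action of the stabiliser group $A_{S''}$ on the two sides, and hence matches simplex stabilisers. This equivariance is implicit in the proof of \autoref{LinkDev}, but since the whole argument rests on comparing stabilisers across $\Phi$ it is worth spelling out. Everything else --- fullness of $\Fix(P)$, the bijection between a link-simplex $\sigma$ and the cone $\sigma*\Delta_{S''}$, and the conjugation reduction --- is routine.
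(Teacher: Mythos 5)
Your proof is correct and follows essentially the same route as the paper, which simply observes that $Lk_{\Fix(P)}(\Delta)=\Fix(P)\cap Lk_{X_S}(\Delta)$ and that the link is \emph{equivariantly} isomorphic to $X_{S''}$ by \autoref{LinkDev}, so the intersection is identified with $\Fix_{X_{S''}}(P)$. You have merely made explicit the two points the paper leaves implicit (fullness of $\Fix(P)$ and the stabiliser-matching equivariance of the link isomorphism), which is fine.
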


\begin{proof}
	We have $Lk_{\Fix(P)}(\sigma) = \Fix(P)\cap Lk_{X_S}(\sigma)$. Since $Lk_{X_S}(\sigma)$ is equivariantly isomorphic to~$X_{S''}$ by \autoref{LinkDev}, the previous intersection is thus isomorphic to $\Fix_{X_{S''}}(P)$.
\end{proof}

\paragraph{Parabolic subgroups on at least two generators.} We start with the case of a parabolic subgroup $P$ of type $S'$ with $|S'|\geq 2$. 

\begin{lemma} \label{LemmaSingleSimp} If $|S'| \geq 2$ then $\Fix(A_{S'})$ is a single simplex $\Delta$ such that $\Stab(\Delta)=A_{S'}$.
\end{lemma}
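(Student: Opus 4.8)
The goal is to show that if $P = A_{S'}$ with $|S'| \geq 2$, then $\Fix(P)$ consists of exactly one simplex, namely the standard simplex $\Delta_{S'}$ that $P$ stabilises. Since $P = A_{S'}$ clearly fixes $\Delta_{S'}$, the content is that $\Fix(P)$ contains nothing more. I would argue by induction on $|S|$, the number of generators of the ambient group, exploiting Lemma~\ref{lem:link_fix}: for any simplex $\Delta$ of $\Fix(P)$ of type $S''$, the link $Lk_{\Fix(P)}(\Delta)$ is isomorphic to $\Fix_{X_{S''}}(P)$.

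The key reduction is to the case $|S'| = 2$, i.e.\ $P$ is a dihedral parabolic subgroup $A_{\{a,b\}}$ with $m_{ab} \geq 3$. First, suppose $\Fix(P)$ were to contain a vertex $v$ not belonging to $\Delta_{S'}$. By Lemma~\ref{pi0pi1} and the structure of $X_S$, one can find an edge in $\Fix(P)$, hence (using that $\Fix(P)$ is a subcomplex and $X_S$ is connected/simply-connected) reduce to analysing what simplices adjacent to $\Delta_{S'}$ can be fixed. Concretely: if $\Delta$ is a simplex strictly containing $\Delta_{S'}$ and fixed by $P$, then $\Stab(\Delta) \supseteq P$ is a parabolic subgroup properly containing $A_{S'}$ and corresponds (after conjugating into the fundamental domain) to some $A_{T}$ with $S' \subsetneq T$; but then $\Delta$ has type $T$, contradicting that $\Fix(P)$ near $\Delta_{S'}$ should collapse. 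The real work is to rule out simplices that are \emph{not} comparable to $\Delta_{S'}$: here I would use Lemma~\ref{lem:link_fix} to pass to the link of $\Delta_{S'}$ itself — but $\Delta_{S'}$ has type $S'$, and $\Fix_{X_{S'}}(A_{S'})$ is a single vertex (the cone point of the link), since in the Artin complex $X_{S'}$ of the dihedral group, the subgroup $A_{S'}$ is the full group and fixes only the "empty-type" simplex. Thus the link of $\Delta_{S'}$ in $\Fix(P)$ is a single point, forcing $\Fix(P) = \Delta_{S'}$. Unwinding this is essentially the base case.

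For the base case $|S| = 3$ with $|S'| = 2$: here $\Fix(A_{\{a,b\}})$ must be examined directly inside $X_S$, a $2$-dimensional systolic complex (by Theorem~\ref{MainThm}). Suppose for contradiction $\Fix(P)$ contains two vertices $v, v'$ at combinatorial distance $\geq 1$ that do not both lie in $\Delta_{S'}$. Then $P = A_{\{a,b\}}$ fixes both, and by Lemma~\ref{lemma:fixing_geodesic} it fixes pointwise a combinatorial geodesic between them. Walking along this geodesic edge by edge, each edge $e$ is fixed by $P$, so $\Stab(e) \supseteq A_{\{a,b\}}$; but stabilisers of edges are parabolic subgroups of rank $2$ (dihedral), and a dihedral parabolic subgroup $A_{\{a,b\}}$ is not properly contained in any parabolic subgroup of rank $2$ — and it cannot equal a \emph{different} rank-$2$ parabolic unless they are conjugate and of spherical type, which in a $3$-generator large-type group forces the two edges to actually coincide (using the conjugacy classification: distinct dihedral parabolics are conjugate only via odd-labelled paths, but here the subgroup is literally equal, and $N(A_{\{a,b\}}) = A_{\{a,b\}}$ by the rank-$\geq 2$ part of Theorem~\ref{thm:main_normalizer} — careful, this is circular, so instead I would argue that $\Stab(e) = \Stab(e')$ and both contain $\Delta_{\{a,b\}}$-type data forces $e = e'$ via Claim~2/Claim~3 machinery of Section~\ref{section4}, or more elementarily via the fact that a maximal simplex with stabiliser exactly $P$ is unique). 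Hence the geodesic has length $0$, i.e.\ $v = v'$, and all fixed vertices coincide with vertices of $\Delta_{S'}$.

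The main obstacle I anticipate is making rigorous the step "a dihedral parabolic $A_{\{a,b\}}$ cannot properly sit inside, nor coincide with a conjugate of, another rank-$2$ parabolic in a way that produces extra fixed simplices" without circularly invoking the normaliser computation that this lemma is meant to feed into. The clean way around this is to observe that rank-$2$ parabolics are maximal proper parabolics in a $3$-generator large-type Artin group (any parabolic properly containing $A_{\{a,b\}}$ would be the whole group $A_S$, which is not a proper parabolic and anyway doesn't stabilise a simplex of $X_S$), together with the fact from Theorem~\ref{theorem:intersection}/Claim~3 that among simplices with a fixed stabiliser there is a unique one of maximal dimension — combined with Lemma~\ref{lem:link_fix} to run the induction for $|S'| \geq 2$, $|S| > 3$. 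I would then conclude that $\Fix(A_{S'})$ has a single maximal simplex $\Delta_{S'}$ and no simplices outside it, hence equals $\Delta_{S'}$.
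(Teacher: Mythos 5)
Your overall strategy (induction via \autoref{lem:link_fix}, plus convexity of $\Fix(P)$ from \autoref{lemma:fixing_geodesic}) is the right one and is close to the paper's, but there are genuine gaps. The central one is the step ``the link of $\Delta_{S'}$ in $\Fix(P)$ is a single point, forcing $\Fix(P)=\Delta_{S'}$.'' First, a factual error: $\Fix_{X_{S'}}(A_{S'})$ is \emph{empty}, not a cone point --- $A_{S'}$ acts on its own Artin complex with no global fixed point, since every simplex stabiliser there is a \emph{proper} parabolic of $A_{S'}$. Second, and more importantly, knowing that the link of the one top simplex $\Delta_{S'}$ is empty only says that $\Delta_{S'}$ is maximal in $\Fix(P)$; it says nothing about simplices of $\Fix(P)$ attached along proper faces of $\Delta_{S'}$ or lying elsewhere. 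What is actually needed is control of the links of \emph{all} simplices of $\Fix(P)$ (in particular its vertices), together with the combinatorial observation that a convex subcomplex all of whose links are simplices or empty is itself a simplex: if it were not, it would contain a combinatorial geodesic $u,v,w$ of length $2$, and then $Lk_{\Fix(P)}(v)$ would contain two vertices at distance at least $2$, hence would not be a simplex. This is the engine that makes the induction (which is naturally on the corank $|S\setminus S'|$, with each link $Lk_{\Fix(P)}(\Delta)\cong\Fix_{X_{S''}}(P)$ handled by the inductive hypothesis) close up; your write-up never supplies it.

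There is also a type confusion in your base case that sends you on an unnecessary and (as you yourself note) potentially circular detour. In a $3$-generator Artin complex the \emph{vertices} have dihedral (rank-$2$) stabilisers and the \emph{edges} have cyclic (rank-$1$) stabilisers, not the reverse; a simplex of type $S''$ has dimension $|S|-|S''|-1$, so $\Delta_{S'}$ with $|S'|=2$ is already a vertex. With the correct types the argument is immediate: if $\Fix(A_{\{a,b\}})$ contained two distinct vertices, then by \autoref{lemma:fixing_geodesic} it would fix an edge, whose stabiliser is conjugate to some $\langle s\rangle$ and hence cannot contain the non-abelian group $A_{\{a,b\}}$. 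No appeal to the conjugacy classification of dihedral parabolics, to \autoref{theorem:intersection}, or to the normaliser theorem (which indeed depends on this lemma and would be circular) is needed. Fixing the type bookkeeping and supplying the ``all links are simplices $\Rightarrow$ simplex'' claim would turn your sketch into essentially the paper's proof.
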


\begin{proof} We will be using the following claim:
	
\medskip	
		
		\noindent\emph{Claim.}\, If a subcomplex $Y$  of $X_S$ is such that all of its links are simplices or empty, then $Y$ itself is a simplex.

\smallskip		
		
		 Indeed, if $Y$ is not a simplex, then it contains a combinatorial path $u, v, w$ that forms a geodesic of $X_S$. The two vertices $u, w$ define two vertices of $Lk_{Y}(v)$ at distance at least $2$ by assumption, hence $Lk_{Y}(v)$ is not a simplex, which proves the claim. 
		
\medskip		
		
	Recall from \autoref{lem:link_fix} that for a simplex $\Delta$ of $\Fix(P)$ corresponding to a simplex of type $S''$, the link $Lk_{\Fix(P)}(\Delta)$ is isomorphic to $\Fix_{X_{S''}}(P)$.  If $|S-S'|=1$, then $\Fix(P)$ must be a single vertex $v$: If it weren't, it would follow from the convexity of $\Fix(P)$ (\autoref{lemma:fixing_geodesic}) that $P$ fixes an edge of~$X_S$, which is impossible since in that case $P$ is a maximal proper parabolic subgroup of~$A_S$. $\Fix(A_{S'})$ being a single simplex now follows by induction on $|S-S'|\geq 1$ by applying the above Claim. The dimension of $\Fix(A_{S'})$ is $|S-S'|-1$, so by maximality its stabiliser has to be $A_{S'}$. 
\end{proof}

\begin{corollary}\label{cor:stab_notcyclic} If $P$ is a parabolic subgroup of $A_S$ of type $S'$ with  $|S'| \geq 2$, then $N(P) = P$.
	% $Stab(\Fix(A_{S'})) = A_{S'}$.
\end{corollary}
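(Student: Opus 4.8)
The plan is to deduce $N(P) = P$ directly from the two geometric results that precede it: \autoref{LemmaNorm}, which identifies $N(P)$ with $\Stab(\Fix(P))$, and \autoref{LemmaSingleSimp}, which says that when $|S'| \geq 2$ the fixed-point set $\Fix(P)$ is a single simplex $\Delta$. First I would note that, since $P$ has type $S'$ with $|S'| \geq 2$, up to conjugation $P = A_{S'}$, so that $\Fix(P) = \Fix(A_{S'})$ is a single simplex $\Delta$ by \autoref{LemmaSingleSimp}. This simplex $\Delta$ is then automatically the unique maximal simplex of $\Fix(P)$, and it is fixed pointwise by $P$; moreover its stabiliser contains $P$ but cannot be strictly larger, since by construction of the Artin complex the stabiliser of $\Delta$ is a parabolic subgroup of type $S'$ and $P$ is already such a parabolic. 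Hence $\Stab(\Delta) = P$, so $\Delta$ is a maximal simplex of $\Fix(P)$ in the sense used in \autoref{LemmaNorm}.

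Next I would apply \autoref{LemmaNorm}: an element $g \in A_S$ lies in $N(P)$ if and only if it sends some maximal simplex of $\Fix(P)$ to some maximal simplex of $\Fix(P)$. But $\Fix(P)$ has exactly one maximal simplex, namely $\Delta$, so $g \in N(P)$ forces $g \cdot \Delta = \Delta$, i.e. $g \in \Stab(\Delta) = P$. The reverse inclusion $P \subseteq N(P)$ is trivial. Therefore $N(P) = P$.

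The argument is essentially immediate once the preceding results are in hand, so there is no real obstacle at this stage; the only point requiring a line of justification is the identification $\Stab_{X_S}(\Delta) = P$ for the unique simplex $\Delta$ of $\Fix(P)$, which follows from the fact that $P$ fixes $\Delta$ pointwise (so $P \subseteq \Stab(\Delta)$) together with the observation that $\Stab(\Delta)$ is itself a parabolic of the same type $S'$ (so it cannot properly contain $P$, again since both are parabolics of type $S'$ and one sits inside the other — or, alternatively, because $\Stab(\Delta)$ fixes $\Delta \subseteq \Fix(P)$ hence lies in $\Stab(\Fix(P)) = N(P)$, and one checks directly it normalises and hence equals $P$). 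I would phrase the corollary's proof to make the dependence on \autoref{LemmaNorm} and \autoref{LemmaSingleSimp} explicit and keep it to two or three sentences.

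\begin{proof}
	Up to conjugation we may assume $P = A_{S'}$. By \autoref{LemmaSingleSimp}, $\Fix(P)$ is a single simplex $\Delta$, which is the unique maximal simplex of $\Fix(P)$ and satisfies $\Stab_{X_S}(\Delta) = P$. By \autoref{LemmaNorm}, an element $g \in A_S$ lies in $N(P)$ if and only if it sends a maximal simplex of $\Fix(P)$ to a maximal simplex of $\Fix(P)$; since $\Delta$ is the only such simplex, this forces $g \cdot \Delta = \Delta$, that is, $g \in \Stab_{X_S}(\Delta) = P$. Hence $N(P) \subseteq P$, and the reverse inclusion is trivial, so $N(P) = P$.
\end{proof}
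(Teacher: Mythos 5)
Your proof is correct and follows essentially the same route as the paper: combine \autoref{LemmaNorm} with \autoref{LemmaSingleSimp}, the paper phrasing it as the chain $N(P)=\Stab(\Fix(P))=\Stab(\Delta)=\Fix(\Delta)=P$ where the last two equalities use that the action is without inversions. The only cosmetic difference is your justification of $\Stab_{X_S}(\Delta)=P$: rather than asserting that $\Delta$ is ``of type $S'$'' (a priori $\Fix(P)$ could be a simplex strictly containing the type-$S'$ face), it is cleaner to argue as the paper does that $\Stab(\Delta)=\Fix(\Delta)$ by absence of inversions, and $\Fix(\Delta)$ is squeezed between $P$ and the pointwise stabiliser of the type-$S'$ face of $\Delta$, which is $P$.
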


\begin{proof} By \autoref{LemmaNorm} we know that $N(P) = \Stab(\Fix(P))$. Moreover, we know from \autoref{LemmaSingleSimp} that there is a simplex $\Delta$ in $X_S$ such that $\Fix(P) = \Delta$ and $\Stab(\Delta)=P$. In particular,
	$$N(P) = \Stab(\Fix(P)) = \Stab(\Delta) {=} P.$$
	%where  $(*)$ comes from the fact that the action is without inversions.
\end{proof}

%Note that we can describe the normalizer of Artin subgroups as a consequence of \autoref{LemmaNorm} and \autoref{LemmaSingleSimp}:
%
%\begin{corollary}\label{cor:normalizer} Let $A_{S'}$ be a parabolic subgroup of $A_S$. Then 
%	$N_{A_S'}(A_S) = A_S$.
%\end{corollary}

\paragraph{Parabolic subgroups on one generator.} We now move to the case of a parabolic subgroup of type $S'$ with $|S'|=1$. We start with the following general remark:

\begin{lemma} \label{LemmaConvex} Let $P$ be a parabolic subgroup of $A_S$. Then $\Fix(P)$ is contractible.
\end{lemma}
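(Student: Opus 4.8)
The plan is to deduce both assertions from the systolicity of~$X_S$ (\autoref{MainThm}) together with the "geodesics between fixed points are fixed" principle of \autoref{lemma:fixing_geodesic}.

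\emph{Connectedness.} Since $P$ is parabolic it stabilises some simplex of~$X_S$, and as the action of~$A_S$ on~$X_S$ is without inversions, $P$ fixes that simplex pointwise; thus $\Fix(P)\neq\varnothing$. The same absence of inversions shows that $\Fix(P)$ is a \emph{full} subcomplex of~$X_S$: if $P$ fixes every vertex of a simplex~$\sigma$, then it fixes~$\sigma$. Now let $v,v'$ be two vertices of~$\Fix(P)$. Applying \autoref{lemma:fixing_geodesic} to the subgroup~$P\leq A_S$ acting on the systolic complex~$X_S$, we get that $P$ fixes pointwise every combinatorial geodesic of~$X_S$ from~$v$ to~$v'$; in particular such a geodesic lies in~$\Fix(P)$. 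Hence $\Fix(P)$ is connected.

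\emph{Simple connectedness.} The argument above in fact shows that $\Fix(P)$ is a \emph{convex} subcomplex of~$X_S$, in the strong sense that it is full and contains, together with any two of its vertices, every combinatorial geodesic of~$X_S$ joining them. I would then invoke the fact --- essentially due to Januszkiewicz--\'Swi\k{a}tkowski, see \citep[Section~7]{JS} --- that a convex subcomplex of a systolic complex is itself systolic (equivalently, it is a combinatorial retract of the ambient complex), and is therefore simply connected, indeed contractible. Since $|S|\geq 3$, the complex~$X_S$ is genuinely systolic, so this applies. I expect the main subtlety to be precisely this implication: $\Fix(P)$ may fail to be locally finite --- its vertices can have infinite valence, coming from free factors in the normalisers of cyclic parabolics --- so convexity must be used through the geodesic condition rather than through any finiteness of the local structure, and the "convex~$\Rightarrow$~systolic" principle must only be applied to the systolic complexes~$X_{S'}$ with $|S'|\geq 3$ (the dihedral Artin complexes $X_{\{a,b\}}$ are not systolic, and fixed-point sets in them need not even be connected).

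If a self-contained argument is preferred, one can instead induct on~$|S|$. When $|S'|\geq 2$ one simply invokes \autoref{LemmaSingleSimp}, which already gives that $\Fix(P)$ is a single simplex. When $|S|=3$ and $|S'|=1$, every $2$-simplex of~$X_S$ has trivial stabiliser, so $\Fix(P)$ is a connected graph, and it suffices to exclude embedded cycles: a shortest one would, after rerouting through geodesics fixed by~$P$ (\autoref{lemma:fixing_geodesic}) and using both fullness and the fact that in a systolic complex the common neighbours of two vertices at distance~$2$ span a simplex, force a $2$-simplex to lie inside~$\Fix(P)$ --- a contradiction (the degenerate case, where all those simplices of common neighbours are single vertices, is ruled out by a short combinatorial Gauss--Bonnet estimate on a minimal disc diagram bounding the cycle). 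For the inductive step one uses \autoref{lem:link_fix} (together with \autoref{LinkDev}) to identify the links of~$\Fix(P)$ at its vertices with fixed-point sets in strictly smaller Artin complexes, and then runs the same "fill by fixed geodesics" scheme at the level of~$X_S$. In either approach the real work is the passage from "$\Fix(P)$ contains the relevant geodesics" to "$\Fix(P)$ is simply connected", which is exactly where the systolic geometry of~$X_S$ is essential.
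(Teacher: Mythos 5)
Your first argument is exactly the paper's proof: \autoref{lemma:fixing_geodesic} shows $\Fix(P)$ is full and contains all combinatorial geodesics between its vertices, hence is connected and $3$-convex, and the paper then cites \citep[Fact~3.3.1 and Lemma~7.2]{JS} to conclude that a connected, locally $3$-convex subcomplex of a systolic complex is contractible, hence simply connected. The alternative inductive sketch in your second paragraph is unnecessary, but the main route is correct and coincides with the paper's.
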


The proof of this lemma will rely on the  following notion of convexity from \citep{JS}:

\begin{definition}
	A subcomplex $Y$ of a simplicial complex $X$ is \textbf{$3$-convex} if it is full and every combinatorial geodesic of length $2$ with endpoints in $Y$ is contained in $Y$. It is \textbf{locally $3$-convex} if for every simplex $\sigma$ of $Y$, the link  $Lk_Y(\sigma)$ is $3$-convex in $Lk_X(\sigma)$.
\end{definition}

\begin{proof}[Proof of \autoref{LemmaConvex}]
%	\heyAlex{It seems that we are proving here is the general statement: If $G$ acts by isometries on a systolic complex and $H$ is a subgroup of $G$, then $\Fix(H)$ is $3$-convex, which is the analogue of the convexity of fixed-point sets in CAT(0) spaces. This is presumable well-known, there should be some reference somewhere...} 
%	
%	
%	Our goal will be to show that $\Fix(P)$ is convex in $X_S$ in the sense of \citep{JS}, that is, that $\Fix(P)$ is connected and locally $3$-convex. From there, \citep[Lemma~7.2]{JS} implies that $\Fix(P)$ is contractible, hence simply-connected.	
By  \autoref{lemma:fixing_geodesic}, $\Fix(P)$ contains every geodesic between two vertices of $\Fix(P)$. In particular, it is connected and $3$-convex, hence locally $3$-convex by \citep[Fact~3.3.1]{JS}. By \citep[Lemma~7.2]{JS}, $\Fix(P)$ is thus contractible.
\end{proof}

It turns out that such fixed-point sets have a very simple geometry. We introduce the following:

\begin{definition}\label{def:dual}
	The \textbf{dual graph} $T_P$ of $\Fix(P)$ is defined as follows: 
	\begin{itemize}
		\item Vertices of $T_P$ correspond to the  simplices of $\Fix(P)$ of type $S'\subsetneq S$ with $|S'|=1$ (called \textbf{type 1 vertices}) or  $|S'|=2$ (called \textbf{type 2 vertices}).
		\item We put an edge between a type 1 vertex $\Delta$ and a type 2 vertex $\Delta'$ whenever $\Delta'\subset \Delta$. 
		\item Finally, $T_P$ is the subgraph obtained by removing the type 2 vertices that have valence 1.
	\end{itemize}
	We think of $T_P$ as a subgraph of the first barycentric subdivision of $\Fix(P)$.
\end{definition}

We have the following:

\begin{lemma}\label{lem:dualgraph_tree}
	The dual graph $T_P$ is a simplicial tree.
\end{lemma}

In a nutshell, the proof of \autoref{lem:dualgraph_tree} goes as follows: We construct a sequence of subcomplexes
$$X_0 \supsetneq X_1 \supsetneq \cdots \supsetneq X_k,$$
where $X_0 $ is the first barycentric subdivision of $\Fix(P)$ and $X_k=T_P$, and such that for each $0 \leq i \leq k-1$, $X_{i+1}$ is a deformation retract of $X_i$. Since $X_0$ is contractible by \autoref{LemmaConvex}, it will then follow that the graph $T_P$ is also contractible,  hence  is a tree. 

We will need the following standard result from algebraic topology to construct deformation retractions:

\begin{lemma}\label{lem:contractible_collapse}
	Let $X$ be a simplicial complex, and let $v$ be a vertex of $ X$ whose link ${Lk}_X(v)$ is contractible. Then the subcomplex spanned by $X-v$ is a deformation retract of $X$. 
\end{lemma}

\begin{proof}
	Since the star $\mathrm{Star}_X(v)$ is isomorphic to a cone over ${Lk}_X(v)$, we first notice that~$X$ is obtained from $X-v$ by coning-off the contractible link ${Lk}_X(v)$. Recall that for a simplicial complex~$Y$ and a contractible subcomplex~$Z$, the quotient map $Y \rightarrow Y/Z$ obtained by collapsing~$Z$ to a point is a homotopy equivalence, see  \cite[Proposition 0.17]{Hatcher}. We thus have the following commutative diagram:
	% https://q.uiver.app/?q=WzAsNCxbMCwwLCJYLXYiXSxbMiwwLCJYIl0sWzAsMiwiKFgtdikvbGtfWCh2KSJdLFsyLDIsIlgvc3RfWCh2KSJdLFswLDJdLFsxLDNdLFsyLDMsIiIsMSx7Imxlbmd0aCI6NTAsImxldmVsIjoyLCJzdHlsZSI6eyJoZWFkIjp7Im5hbWUiOiJub25lIn19fV0sWzAsMSwiIiwxLHsibGVuZ3RoIjo4MCwic3R5bGUiOnsidGFpbCI6eyJuYW1lIjoiaG9vayIsInNpZGUiOiJ0b3AifX19XV0=
	\[\begin{tikzcd}
	{X-v} && {X} \\
	\\
	{(X-v)/Lk_X(v)} && {X/\mathrm{Star}_X(v),}
	\arrow[from=1-1, to=3-1]
	\arrow[from=1-3, to=3-3]
	\arrow[Rightarrow, from=3-1, to=3-3, shorten <=11pt, shorten >=11pt, no head]
	\arrow[from=1-1, to=1-3, shorten <=7pt, shorten >=7pt, hook]
	\end{tikzcd}\]
	where both vertical arrows are homotopy equivalences since ${Lk}_X(v)$ and its cone $\mathrm{Star}_X(v)$ are contractible. Thus, the inclusion $X-v \hookrightarrow X$ is a homotopy equivalence, and it follows from \cite[Corollary 0.20]{Hatcher} that the subcomplex spanned by $X-v$ is a deformation retract of $X$.  
\end{proof}

\begin{proof}[Proof of \autoref{lem:dualgraph_tree}]
	Consider the barycentric subdivision $\Fix(P)'$ of~$\Fix(P)$. A vertex~$v$ of~$\Fix(P)'$ corresponds to a simplex of $\Fix(P)$; We will call  the dimension of the corresponding simplex the \textit{height} of~$v$. For every $0 \leq k \leq |S|-2$, we define the subcomplex $X_k$ of $\Fix(P)'$ spanned by the vertices of height at least $k$. In particular, $X_0 = \Fix(P)'$ and $X_{|S|-2}$ is a subgraph of $\Fix(P)'$ containing $T_P$. 
	
\smallskip	
	
	We now show that for every $0\leq k \leq |S|-3$, $X_{k+1}$ is a deformation retract of~$X_k$. Notice that~$X_k$ is obtained from~$X_{k+1}$ by adding for every vertex~$v$ of height~$k$ the star $ \mathrm{Star}_{X_{k}}(v)$, which is isomorphic to a simplicial cone over the link~${Lk}_{X_k}(v)$. Let~$v$ be a vertex of height $0 \leq k \leq |S|-3$.  This vertex corresponds to a simplex~$\Delta$ of~$\Fix(P)$ of type~$S'$ for some subset $S'\subsetneq S$ with $|S'|\geq 3$.  Note that a vertex of~$X_k$ adjacent to~$v$ must have height greater than~$k$ by construction, hence the link ${Lk}_{X_{k}}(v)$ is isomorphic to the first barycentric subdivision of ${Lk}_{\Fix(P)}(\Delta)$. In particular, ${Lk}_{X_{k}}(v)$  is isomorphic to the first barycentric subdivision of $\Fix_{X_{S'}}(P)$ by \autoref{lem:link_fix}, and hence is contractible by \autoref{LemmaConvex}. It thus follows from \autoref{lem:contractible_collapse} that~$X_{k+1}$ is a deformation retract of $X_{k+1} \cup \mathrm{Star}_{X_{k}}(v)$. Since for two distinct vertices $v$, $v'$ of height~$k$, the subcomplexes $X_{k+1} \cup \mathrm{Star}_{X_{k}}(v)$ and $X_{k+1} \cup \mathrm{Star}_{X_{k}}(v')$ intersect along $X_{k+1}$, we can glue the various deformation retractions into a deformation retraction of 
	$$X_{k} = X_{k+1} \cup \bigcup_{\mathrm{height}(v)=k}  \mathrm{Star}_{X_{k}}(v)$$
	onto $X_{k+1}$. Thus, for every $0\leq k \leq |S|-3$, $X_{k+1}$ is a deformation retract of $X_k$. Thus, the graph $X_{|S|-2}$ is a deformation retract of $X_0 = \Fix(P)'$. Since the latter complex is contractible by \autoref{LemmaConvex}, so is the graph $X_{|S|-2}$, and it follows that $X_{|S|-2}$ is a tree. Finally, $T_P$ is obtained from $X_{|S|-2}$ by removing the type 2 vertices that have valence 1. Thus, $T_P$ is a deformation retract of $X_{|S|-2}$, hence $T_P$ is a tree.
\end{proof}

Note that since $N(P) = \Stab(\Fix(P))$ by \autoref{LemmaNorm}, $N(P)$ acts on $\Fix(P)$, hence on the dual tree $T_P$. We will use this action to prove the following:

\begin{lemma}\label{lem:norm_split}
	The normaliser~$N(P)$ of~$P$ splits as a direct product $P\times F$, where~$F$ is a finitely generated free group. 
\end{lemma}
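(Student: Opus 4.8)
The idea is to run the standard argument for groups acting on trees with a fixed point at infinity / without one, using the action of $N(P)$ on the dual tree $T_P$ established above. Recall $P$ is a cyclic parabolic subgroup, say $P = \langle g \rangle$ conjugate to some $\langle a \rangle$ with $a \in S$. Since $N(P) = \Stab(\Fix(P))$ by \autoref{LemmaNorm}, $N(P)$ acts on $\Fix(P)$ and hence simplicially on $T_P$, which is a simplicial tree. The first step is to analyse this action: $P$ itself acts trivially on $\Fix(P)$ (it fixes it pointwise), so $P$ is contained in the kernel of the action $N(P) \to \mathrm{Aut}(T_P)$. In fact I expect $P$ \emph{is} this kernel: an element fixing every maximal simplex of $\Fix(P)$ must fix a top-dimensional simplex $\Delta$ with $\Stab(\Delta) = P$ (such $\Delta$ exist since $\Fix(P)$ is a $(|S|-2)$-dimensional chamber complex by \autoref{lem:fix_chamber} and $P$ is the stabiliser of the simplices of type $S'$ it fixes), whence it lies in $P$. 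So we get a short exact sequence $1 \to P \to N(P) \to Q \to 1$ with $Q \leq \mathrm{Aut}(T_P)$ acting on the tree.

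**Key steps.** I would then argue that the quotient $Q$ is a \emph{free group}, acting freely on the tree $T_P$. Freeness of the action: a nontrivial element $h \in N(P)$ that fixes a vertex of $T_P$ would fix a maximal simplex of $\Fix(P)$ (for type 1 vertices) or a non-free codimension-$1$ face (for type 2), hence fix a simplex whose stabiliser contains $P$ with finite index complications — more precisely its stabiliser is a parabolic subgroup containing $P$, and for $h$ to normalise $P$ while fixing that simplex forces $h \in P$, so its image in $Q$ is trivial. An edge stabiliser likewise is trivial in $Q$. Therefore $Q$ acts freely on the tree $T_P$, so by the structure theorem for groups acting on trees (Nielsen–Schreier via Bass–Serre, \citep{BH} or Serre's \emph{Trees}) $Q$ is free. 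It is finitely generated because the action of $N(P)$ on $\Fix(P)$ is cocompact: $\Fix(P)$ has finitely many $N(P)$-orbits of simplices (it is a subcomplex of the cocompact complex $X_S$ and $N(P) = \Stab(\Fix(P))$), so $T_P / Q$ is a finite graph, and a free group acting freely cocompactly on a tree is finitely generated (free of rank equal to the first Betti number of the finite quotient graph plus the rank of the quotient, but since the action is free the quotient graph's $\pi_1$ is $Q$ and it is finite). Hence $Q \cong F$ is a finitely generated free group.

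**Splitting.** Finally I need the extension $1 \to P \to N(P) \to F \to 1$ to split as a \emph{direct} product. Since $F$ is free, the extension splits as a semidirect product $N(P) = P \rtimes F$ automatically (free groups are projective). To upgrade this to a direct product it suffices to show $F$ centralises $P$, i.e. the conjugation action of $N(P)$ on $P$ is trivial. Now $P \cong \mathbb{Z}$ (it is an infinite cyclic parabolic subgroup — a dihedral-type-or-smaller parabolic on one generator is just $\langle a \rangle \cong \mathbb{Z}$), so $\mathrm{Aut}(P) = \{\pm 1\}$, and the conjugation map $N(P) \to \mathrm{Aut}(P) \cong \mathbb{Z}/2$ kills the torsion-free group $F$ provided we know $F$ (equivalently $N(P)/P$) has no index-two subgroup issue — but more directly: any $h \in N(P)$ either centralises $g$ or inverts it, $hgh^{-1} = g^{-1}$; I claim inversion is impossible. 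If $hgh^{-1} = g^{-1}$ then $h$ would have to send the fixed-point set with its "orientation" to its reverse; more concretely one can pass to the Artin complex and observe that $g$ and $g^{-1}$ generate the same cyclic parabolic and $h$ fixes $\Fix(P)$ setwise, but an element inverting $g$ would conjugate a Garside-type positive generator to a negative one — this cannot happen in an Artin group, where no standard generator is conjugate to its inverse (a consequence of the homomorphism $A_S \to \mathbb{Z}$ sending every generator to $1$). So conjugation by $N(P)$ on $P$ is trivial, the semidirect product is direct, and $N(P) = P \times F$.

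**Main obstacle.** The delicate point is pinning down exactly that the kernel of $N(P) \to \mathrm{Aut}(T_P)$ equals $P$ and that the action on $T_P$ is free — this requires carefully using \autoref{lemma:geometric_parabolics} together with the chamber/tree structure from \autoref{lem:fix_chamber}, in particular that stabilisers of simplices of $\Fix(P)$ are parabolic subgroups sandwiching $P$, and that such a sandwiched parabolic normalising $P$ must coincide with $P$ (this last point is where the type-$S'$ with $|S'|=1$ hypothesis and the structure of dihedral Artin groups enter). The abelianisation trick for ruling out $g \sim g^{-1}$ handles the direct-versus-semidirect issue cleanly.
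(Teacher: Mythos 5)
Your argument has a genuine error at its central step: the action of $Q=N(P)/P$ on the dual tree $T_P$ is \emph{not} free. A type 2 vertex of $T_P$ corresponds to a codimension-one face of $\Fix(P)$ whose stabiliser is a dihedral parabolic subgroup $gA_{cd}g^{-1}$ containing $P$, and the centre of $A_{cd}$ (generated by $\delta_{cd}$ or $\delta_{cd}^2$, cf.\ \autoref{thm:centre_dihedral}) centralises, hence normalises, the cyclic group $P$ without being contained in it. So $\Stab_{N(P)/P}(v)$ is infinite cyclic for type 2 vertices --- this is exactly \autoref{lem:stab_tree} --- and your assertion that ``for $h$ to normalise $P$ while fixing that simplex forces $h\in P$'' fails precisely for these central elements. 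You therefore cannot invoke ``a group acting freely on a tree is free.'' The correct route, and the one the paper takes, is Bass-Serre theory for the cocompact action with \emph{trivial edge stabilisers} (every edge contains a type 1 vertex, whose stabiliser in $Q$ is trivial) and vertex stabilisers that are trivial or infinite cyclic: $Q$ is the fundamental group of a finite graph of groups with trivial edge groups, i.e.\ a free product of finitely many infinite cyclic groups with a finitely generated free group, hence still a finitely generated free group. Your conclusion survives, but any rank or basis extracted from a supposedly free action would be wrong --- compare \autoref{prop:basis_free}, where the type 2 vertex stabilisers contribute generators of $F$.

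The remaining ingredients of your proposal are sound and in places more explicit than the paper: cocompactness (though you should justify that simplices of $\Fix(P)$ in the same $A_S$-orbit are in the same $N(P)$-orbit, which follows from \autoref{LemmaNorm}), the identification of the kernel of $N(P)\to\mathrm{Aut}(T_P)$ with $P$, the splitting of the extension because free groups are projective, and the upgrade from semidirect to direct product via the homomorphism $A_S\to\mathbf{Z}$ sending each generator to $1$, which rules out $hgh^{-1}=g^{-1}$. The paper leaves this last point implicit, so that part of your write-up is a useful addition; but the freeness claim must be repaired as above.
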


\begin{remark}
	It can be shown that the tree $T_P$ is $N(P)$-equivariantly isomorphic to the standard tree associated to~$P$ as considered in \citep[Definition~4.1]{MP1}. In particular, the proof of \autoref{lem:norm_split}  is essentially the same as the proof of \cite[Lemma~4.5]{MP1}. We however include a proof formulated in our setting for the sake of self-containment.
\end{remark}

Since $P$ is a normal subgroup of $N(P)$ acting trivially on~$T_P$ by construction of~$\Fix(P)$, we can look at the induced action of $N(P)/P$ on~$T_P$. We will use this action to completely describe the normaliser~$N(P)$. We first  need the following result: 

\begin{lemma}\label{lem:stab_tree}
For the action of $N(P)/P$ on $T_P$ we have: 
\begin{itemize}
	\item Type 1 vertices of $~T_P$ have a trivial stabiliser.
	\item Type 2 vertices of $~T_P$ have an infinite cyclic stabiliser.
\end{itemize} 
\end{lemma}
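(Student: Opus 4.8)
The plan is to analyze the action of $N(P)/P$ on the dual tree $T_P$ by relating vertex stabilisers to $\Fix$-sets of larger parabolic subgroups, using the inductive structure provided by \autoref{lem:link_fix}. Throughout, $P$ is a parabolic subgroup of type $S'$ with $|S'|=1$; up to conjugacy we may assume $P = \langle a\rangle$ for some $a\in S$.

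For a type 1 vertex of $T_P$, corresponding to a maximal (i.e.\ $(|S|-2)$-dimensional) simplex $\Delta$ of $\Fix(P)$, the stabiliser $\Stab(\Delta)$ is a parabolic subgroup of $A_S$ of type $S''$ with $|S''| = |S|-1 \geq 2$, and since $P\subseteq \Stab(\Delta)$ we have $P$ is a cyclic parabolic subgroup of the Artin group $\Stab(\Delta) \cong A_{S''}$. Any $g\in N(P)$ fixing $\Delta$ must therefore normalise $P$ inside $\Stab(\Delta)$; but $\Stab(\Delta)$ is a parabolic subgroup on at least two generators, and by \autoref{cor:stab_notcyclic} applied inside $A_{S''}$ we would get $N_{\Stab(\Delta)}(P) = P$ only if $|S''|\geq 2$ forces $\Fix$ to be a point — more precisely, one should argue directly: if $g\in N(P)$ fixes the maximal simplex $\Delta$ pointwise, then $g\in \Stab(\Delta) = \Fix(\Delta)$ is a parabolic of rank $|S|-1$, and I would use \autoref{lem:link_fix} to see $Lk_{\Fix(P)}(\Delta)$ is empty (as $\Delta$ is maximal), combined with the identification $Lk_{\Fix(P)}(\Delta)\cong \Fix_{X_{S''}}(P)$, to conclude that $\Fix_{X_{S''}}(P)$ is a single top-dimensional simplex of $X_{S''}$, hence $\Stab(\Delta)=N(P)\cap\Stab(\Delta)$-computations collapse to $P$. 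Passing to the quotient, the type 1 vertex has trivial stabiliser in $N(P)/P$.

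For a type 2 vertex, corresponding to a codimension 1 face $\Delta'$ of $\Fix(P)$ that is not a free face, $\Delta'$ has type $S''$ with $|S''| = |S|-2$, and $\Stab_{X_S}(\Delta') \cong A_{S''}$ contains $P$ as a cyclic parabolic. By \autoref{lem:link_fix}, $Lk_{\Fix(P)}(\Delta') \cong \Fix_{X_{S''}}(P)$, and since $\Delta'$ is not free this link contains at least two vertices; as $\Fix_{X_{S''}}(P)$ is the fixed set of a cyclic parabolic in a smaller large-type Artin complex, by the inductive form of the proposition it is a gallery-connected chamber complex with tree dual graph. The stabiliser of the type 2 vertex in $N(P)/P$ is then identified with (the image of) $N_{A_{S''}}(P)/P$, which by the inductive hypothesis (the $|S'|=1$ case of \myref{thm:main_normalizer}{Theorem}, or equivalently the structure already being established) splits as $P\times F''$ with $F''$ free, giving $N_{A_{S''}}(P)/P \cong F''$. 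I would then show $F''$ is infinite cyclic by a link-count argument: the action on $Lk_{\Fix(P)}(\Delta')$ has a fundamental domain reflecting the dihedral local picture, and the relevant free group on a $1$-dimensional or lower link is cyclic — concretely, for $|S|=3$ the link is a point or pair of points and the base case gives $\mathbf{Z}$; for higher rank one pushes the dihedral computation of \autoref{LemmaSingleSimp} and the free-product structure through. An alternative, cleaner route: directly observe that modulo $P$, an element stabilising $\Delta'$ but fixing it not pointwise permutes the top simplices of $\Fix(P)$ containing $\Delta'$, i.e.\ acts on the vertex set of $Lk_{\Fix(P)}(\Delta')$; this link, being $\Fix_{X_{S''}}(P)$ for a dihedral-or-larger situation, has an action whose quotient forces the stabiliser to be generated by a single ``rotation'' element, yielding $\mathbf{Z}$.

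The main obstacle I anticipate is the type 2 computation: showing the stabiliser is \emph{exactly} infinite cyclic (not merely free, nor merely nontrivial) requires genuinely using the dihedral structure — specifically that around a codimension $1$ face of type $S''$, the complex $\Fix_{X_{S''}}(P)$ where $|S''| = |S|-2$ reduces inductively until one hits the rank-$3$ base case where $\Fix(P)$ is literally the standard tree of \citep[Definition 4.1]{MP1}, whose edge stabilisers are known to be cyclic. I would handle this by carrying the ``$\Fix_{X_{S''}}(P)$ has cyclic type 2 stabilisers'' as part of a strengthened induction hypothesis alongside \autoref{lem:fix_chamber}, so that the rank-$3$ case supplies the cyclic generator and \autoref{lem:link_fix} propagates it upward; the non-freeness of $\Delta'$ guarantees the generator is of infinite order (a finite-order element would fix a top simplex, contradicting the type 1 analysis and torsion-freeness considerations inherited from the ambient structure). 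The triviality of type 1 stabilisers is comparatively routine once the link is seen to be empty.
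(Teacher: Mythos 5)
Your proposal rests on a misreading of the dimension--type correspondence in the Artin complex, and this invalidates most of its concrete steps. In $X_S$ a simplex of type $S''$ has \emph{codimension} $|S''|$, so the maximal ($(|S|-2)$-dimensional) simplices of $\Fix(P)$ are the codimension-$1$ simplices of $X_S$: their stabilisers are conjugates of $\langle c\rangle$ for a single generator $c$, not parabolic subgroups of rank $|S|-1$ as you assert. Since $P$ is a nontrivial parabolic subgroup contained in this infinite cyclic parabolic subgroup, \autoref{theorem:intersection} forces $\Stab(\Delta)=P$, and the triviality of type~1 stabilisers in $N(P)/P$ is immediate --- no appeal to \autoref{cor:stab_notcyclic} or to links is needed (and your claim that an empty link $Lk_{\Fix(P)}(\Delta)\cong\Fix_{X_{S''}}(P)$ yields ``a single top-dimensional simplex'' does not parse: an empty link gives an empty fixed-point set). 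Likewise a type~2 vertex corresponds to a codimension-$2$ simplex, of type $\{c,d\}$, whose stabiliser is a \emph{dihedral} parabolic subgroup $gA_{cd}g^{-1}$, not one of rank $|S|-2$.

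This misidentification matters most for the second bullet, where your argument has a genuine gap: you never actually prove that the stabiliser is infinite cyclic, only that some induction ``forces a single rotation element.'' The content of the lemma is that the stabiliser of a type~2 vertex in $N(P)/P$ is the image of the set of elements of $gA_{cd}g^{-1}$ normalising $P$, which by the structure of normalisers in dihedral Artin groups (\citep[Lemma~4.5]{MP1}) is $P\cdot gZ(A_{cd})g^{-1}$; the centre $Z(A_{cd})$ is infinite cyclic, generated by $\delta_{cd}$ or $\delta_{cd}^{2}$ according to the parity of $m_{cd}$ (\autoref{thm:centre_dihedral}, i.e.\ Brieskorn--Saito). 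That explicit dihedral computation is the essential input. Your proposal correctly anticipates that ``the dihedral structure'' must be used, but the inductive scaffolding you build around links and the rank-$3$ base case never reduces to it, and as written cannot, because the ranks of the stabilisers involved are reversed throughout.
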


Before starting this proof, let us recall a standard result about dihedral Artin groups: 

\begin{lemma}[\citealp{BS}]\label{thm:centre_dihedral}
	Let $A_{ab}$ be a dihedral Artin group with $2<m_{ab}<\infty$, and let $\delta_{ab}$ be its \textbf{Garside element}, defined as follows: 
	$$\delta_{ab} = \underbrace{abab\cdots}_{m_{ab}}.$$
	Then the centre of $A_{ab}$ is infinite cyclic and equal to  $\langle \delta_{ab}\rangle$ if $m_{ab}$ is even, and $\langle \delta_{ab}^2\rangle$ otherwise. 
	\end{lemma}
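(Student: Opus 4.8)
The plan is to prove both inclusions $Z(A_{ab}) \supseteq \langle z\rangle$ and $Z(A_{ab}) \subseteq \langle z\rangle$, where I set $z = \delta_{ab}$ when $m_{ab}$ is even and $z = \delta_{ab}^2$ when $m_{ab}$ is odd. The first inclusion is a short rewriting computation. For the reverse inclusion I would realise $A_{ab}$ as a central extension $1 \to \langle z\rangle \to A_{ab} \to Q \to 1$ and show that the quotient $Q$ has trivial centre; since a central element of $A_{ab}$ must map to a central element of $Q$, this forces it into the kernel $\langle z\rangle$.

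For the first inclusion, write $m = m_{ab}$ and recall that the Artin relation says $\delta_{ab} = \underbrace{abab\cdots}_{m} = \underbrace{baba\cdots}_{m}$, so the involution $\sigma$ interchanging $a$ and $b$ fixes $\delta_{ab}$. When $m$ is even, writing $\delta_{ab} = a\,\underbrace{bab\cdots}_{m-1}$ and multiplying by $a$ on the right turns the tail $\underbrace{bab\cdots}_{m-1}a = \underbrace{bab\cdots}_{m}$ back into $\delta_{ab}$ by the Artin relation, giving $\delta_{ab}\,a = a\,\delta_{ab}$; applying $\sigma$ yields $\delta_{ab}\,b = b\,\delta_{ab}$, so $\delta_{ab}$ is central. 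When $m$ is odd the same manipulation gives $\delta_{ab}\,a\,\delta_{ab}^{-1} = b$ and $\delta_{ab}\,b\,\delta_{ab}^{-1} = a$, i.e. conjugation by $\delta_{ab}$ realises $\sigma$; hence $\delta_{ab}$ is not central but $\delta_{ab}^2$ is. In both cases $z$ has infinite order: the length homomorphism $A_{ab}\to\mathbb{Z}$ sending each generator to $1$ is well defined (both sides of the relation have length $m$) and sends $z$ to $m$ or $2m$.

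For the reverse inclusion, note that since $z$ is central, its normal closure is just $\langle z\rangle \cong \mathbb{Z}$, so $Q := A_{ab}/\langle z\rangle$ genuinely fits into a central extension. I would identify $Q$ by Tietze transformations, using the substitution $c = ab$ (so $b = a^{-1}c$). When $m = 2n$ is even, $z = c^n$, the Artin relation becomes redundant, and the presentation collapses to $Q \cong \langle a\rangle * \langle c\rangle \cong \mathbb{Z} * \mathbb{Z}/n$. When $m$ is odd, setting $u = \delta_{ab}$ and keeping $c = ab$, the presentation simplifies to $\langle u, c \mid u^2 = 1,\ c^m = 1\rangle \cong \mathbb{Z}/2 * \mathbb{Z}/m$. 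Since $m > 2$, in either case $Q$ is a free product of two nontrivial groups, and such a free product has trivial centre (for instance, via the action on its Bass--Serre tree, a central element would have to lie in every conjugate of both factors). The central-extension argument then gives $Z(A_{ab}) \subseteq \langle z\rangle$, and combined with the first inclusion this yields the claim.

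The main obstacle is the identification of the quotient $Q$: the odd case in particular requires verifying that, once $u^2 = 1$ is imposed, the single Artin relation collapses exactly to the order relation $c^m = 1$, which is where the precise form of $\delta_{ab}$ enters. The remaining ingredients — the centrality computation and the centrelessness of nontrivial free products — are routine.
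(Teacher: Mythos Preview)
The paper does not give its own proof of this lemma: it is quoted as a classical result with a citation to Brieskorn--Saito, so there is no argument in the paper to compare yours against.

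Your proposed proof is correct. For the record, the step you flag as the main obstacle in the odd case goes through cleanly: with $u=\delta_{ab}$ and $c=ab$ one has $u=c^{(m-1)/2}a$, hence $a=c^{-(m-1)/2}u$ and $b=a^{-1}c=u^{-1}c^{(m+1)/2}$; then $ba=u^{-1}cu$, so the Artin relation $u=\Pi(b,a;m)=(ba)^{(m-1)/2}b$ becomes $u=u^{-1}c^{(m-1)/2}\cdot c^{(m+1)/2}=u^{-1}c^{m}$, i.e.\ $u^{2}=c^{m}$. Thus $A_{ab}\cong\langle u,c\mid u^{2}=c^{m}\rangle$, and killing the central element $u^{2}$ yields $Q\cong\mathbb{Z}/2*\mathbb{Z}/m$ as you claim. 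In the even case $m=2n$ you need $n\ge 2$ for $\mathbb{Z}*\mathbb{Z}/n$ to be a genuine free product of nontrivial groups; this is guaranteed by the hypothesis $m>2$, as you observe.
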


\begin{proof}[Proof of \autoref{lem:stab_tree}]
A type 1 vertex $v$ of $T_P$ corresponds to a maximal simplex of $\Fix(P)$. Such a simplex has stabiliser $P$ by construction, hence $\Stab_{N(P)/P}(v)$ is trivial.
	
\noindent Let $v$ be a type 2 vertex of $T_P$ of type $\{c, d\}$. This vertex corresponds to a simplex with associated coset $gA_{cd}$ for some $g \in A_\Gamma$.  It follows from \citep[Lemma 4.5]{MP1} and  \autoref{thm:centre_dihedral} that we have: 
	\begin{itemize}
		\item If $m_{cd}$ is even, then $$\Stab_{N(P)/P}(v) = gZ(A_{cd})g^{-1} = \langle g\delta_{cd}g^{-1} \rangle; $$ 
		\item If $m_{cd}$ is odd, then $$\Stab_{N(P)/P}(v) = gZ(A_{cd})g^{-1} = \langle g\delta_{cd}^2g^{-1} \rangle,$$ 
	\end{itemize}
\end{proof}

We are now ready to prove \autoref{lem:norm_split}.

\begin{proof}[Proof of \autoref{lem:norm_split}]
	Since two type 1 vertices of $T_P$ corresponding to cosets of the same standard parabolic subgroup are in the same $N(P)$-orbit, hence in the same $N(P)/P$ orbit, it follows that the action of $N(P)/P$ on $T_P$ is cocompact. 
	
	Thus, $N(P)$ acts cocompactly and without inversion on a simplicial tree. By  \autoref{lem:stab_tree} the stabilisers of type 1 vertices are trivial (hence so are the stabilisers of edges) and the stabilisers of type 2 vertices are infinite cyclic. It thus follows from Bass-Serre theory that $N(P)/P$ is a finitely-generated free group, and thus~$N(P)$ splits as a direct product $P\times F$, where~$F$ is a finitely generated free group. 
\end{proof}

\paragraph{An explicit basis of the normaliser.} Finding an explicit basis for the free subgroup appearing in  \myref{thm:main_normalizer}{Theorem} is now a standard application of Bass-Serre theory, which was stated as a remark without further justification in \cite[Remark~4.6]{MP1}. We first start by describing a fundamental domain for the action, as well as the quotient space $T_P/N(P)$.

\begin{definition}
	Let $\Gamma'$ be the first barycentric subdivision of the Coxeter graph~$\Gamma_S$. A vertex of~$\Gamma'$ corresponding to a generator~$a$ of $A_S$ will be denoted~$v_a$ and will be said to be of  \textbf{type 1}, while a vertex of $\Gamma'$ corresponding to an edge of $\Gamma$ between generators~$a$ and~$b$ will be denoted $v_{ab}$ and will be said to be of \textbf{type 2}.
	
	 	Let $\Gamma_{a, \mathrm{odd}}$ denote the maximal connected subgraph of $\Gamma$ that contains the vertex~$a$ and only odd-labelled edges. 	Let~$\Gamma_P$ be the graph obtained from the disjoint union of all the edges of~$\Gamma'$ that contain a vertex of $\Gamma_{a, \mathrm{odd}}$, by the following identification: If such an edge~$e$ ($e'$ respectively) of $\Gamma'$ contains a vertex $v$ ($v'$ respectively) such that $v, v'$ correspond to the same vertex of $\Gamma_{a, \mathrm{odd}}$, then~$v$ and~$v'$ are identified and define the same vertex of~$\Gamma_P$.
\end{definition}

Some examples of the graph $\Gamma_P$ are given in  \autoref{fig:free_basis}, when the underlying Coxeter graph is a triangle.
\begin{center}
\begin{figure}
\begin{tabular}{|c|c|c|}
\hline
\multirow{2}{*}{Coxeter graph~$\Gamma$}                       &{Induced graph} &  Rank and basis of $F$, \\
       &  of groups on $\Gamma_p$  & with $N(P)\simeq P \times F$ \\ \hline
{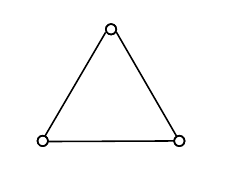} & {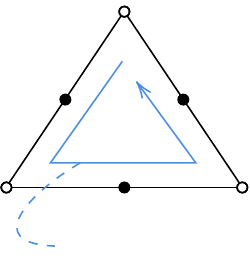} &  \begin{tabular}{c}

The rank is $4$ and a basis is \\

\small
 $\left\{\delta_{ab}^2, \delta_{ac}^2, \delta_{ac} \delta_{bc}^2 \delta_{ac}^{-1},\delta_{ab} \delta_{bc} \delta_{ac} \right\}$

\end{tabular} 
 \\ \hline
         {\input{Coxeter334.pdf_tex}}             &  {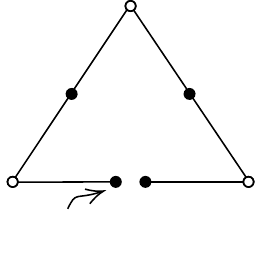} &  \begin{tabular}{c}

The rank is $4$ and a basis is \\         
         \small
$\left\{\delta_{ab}^2, \delta_{ac}^2, \delta_{ab} \delta_{bc} \delta_{ab}^{-1},\delta_{ac} \delta_{bc} \delta_{ac}^{-1} \right\}$          
\end{tabular}          
 \\ \hline
    {\input{Coxeter444.pdf_tex}} & {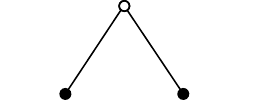} & \begin{tabular}{c} The rank is $2$  and a basis is \\ $\left\{\delta_{ab},\delta_{ac} \right\}$ 
\end{tabular}

 \\ \hline
    {\input{Coxeter443.pdf_tex}}  & {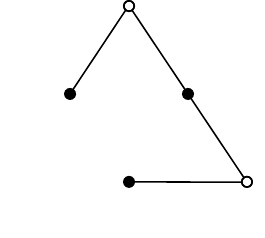} & \begin{tabular}{c}
  The rank is $3$ and a basis is \\ $\left\{\delta_{ab},\delta_{ac}^2, \delta_{ac} \delta_{bc} \delta_{ac}^{-1} \right\}$
\end{tabular}   
            \\ \hline
\end{tabular}

\caption{ Examples of computations of normalisers of the parabolic subgroup $P=\langle a \rangle$, for various large-type triangular Artin groups. Type 2 vertices of $\Gamma_P$ are indicated in bold in the second column and come with their infinite cyclic stabilisers. The group element in blue corresponds to the element of a basis of $F$ coming from the fundamental group of $\Gamma_P$. 
			Note that the structure of the normaliser for large-type triangular Artin groups depends only on the parity of the labels and not on the labels themselves, so the above cases cover all possible cases. }\label{fig:free_basis}
\end{figure}
\end{center}

\begin{definition}
	Let $e$ be an edge of $\Gamma_P$ between a type 1 vertex $v_c$ and a type $2$ vertex $v_{cd}$, for $c,d$ spanning an edge of $\Gamma$. We denote by $\widetilde{e}$ the edge of $T_P$ between the vertex~$A_c$ and the vertex $A_{cd}$. Choose an orientation of each edge of $\Gamma$. For each oriented loop  of $\Gamma_P$ based at $v_a$, we denote by $e_1, \ldots, e_n$ the oriented sequences of edges of $\Gamma$ crossed by $\gamma$, and we define 
$$g_\gamma \coloneqq \delta_{e_1}^{\pm1}\cdots \delta_{e_n}^{\pm1},$$
where the sign for each Garside element $\delta_{e_i}$ depends on whether~$\gamma$ follows the orientation of $e_i$.

We now choose a spanning tree $\tau$ of $\Gamma_P$, which we think of as being based at~$v_a$. For a vertex~$v$ of~$\Gamma_P$, we denote~$\gamma_v$ the oriented geodesic of~$\tau$ from~$v_a$ to $v$. Let~$e$ be an edge of $\Gamma_P$. If~$e$ is contained in~$\tau$, let $v$ be the vertex of $e$ closest to~$v_a$ in~$\tau$. If $e$ is not contained in~$\tau$, let~$v$ be the vertex of~$e$ closest to~$v_a$ in~$\Gamma_P$ (as $\Gamma_P$ is bipartite). We denote $g_v \coloneqq g_{\gamma_v}$, and we set 
$$Y_P \coloneqq \bigcup_{e \subset \Gamma_P} g_{v}\widetilde{e}.$$
This defines a connected subtree of $T_P$.
\end{definition}

\begin{lemma}
	The subtree $Y_P$ is a fundamental domain for the action of $N(P)$ on $T_P$, and the quotient $T_P/N(P)$ is isomorphic to $\Gamma_P$.
\end{lemma}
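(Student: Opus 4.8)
The statement is a strict fundamental domain verification for the action of $N(P)$ on the tree $T_P$. The pieces already in place are: the action factors through $N(P)/P$ (since $P$ fixes $\Fix(P)$ pointwise by construction), it is cocompact by the argument in the proof of \autoref{lem:norm_split}, and by \autoref{lem:stab_tree} it has trivial stabilisers on type $1$ vertices (hence on edges) and infinite cyclic stabilisers on type $2$ vertices. The plan is to produce an $N(P)$-invariant "type" map $\theta\colon T_P\to\Gamma_P$, to check that $Y_P$ maps \emph{isomorphically} onto $\Gamma_P$ under $\theta$, and to check that two simplices of $T_P$ with the same $\theta$-image lie in the same $N(P)$-orbit. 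These three facts together give that $\theta$ descends to an isomorphism $T_P/N(P)\xrightarrow{\ \sim\ }\Gamma_P$ and that the composite $Y_P\hookrightarrow T_P\twoheadrightarrow T_P/N(P)$ is that isomorphism, which is exactly the assertion that $Y_P$ is a fundamental domain with quotient $\Gamma_P$.

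\emph{Constructing $\theta$.} A type $1$ vertex of $T_P$ is a maximal simplex of $\Fix(P)$. Iterating \autoref{lem:link_fix}, such a simplex has stabiliser $P$ and, up to the $N(P)$-action, is a coset of a standard parabolic determined by a single generator $c$; this generator must lie in $\Gamma_{a,\mathrm{odd}}$, since a parabolic of type $\{a\}$ conjugates into $A_c$ exactly when $a$ and $c$ are joined by an odd-labelled path. Send such a vertex to $v_c$. A type $2$ vertex of $T_P$ is a non-free codimension $1$ face of $\Fix(P)$; by \autoref{lem:link_fix} its link is a $\Fix$-set inside a dihedral Artin complex $X_{cd}$, and the computation behind \autoref{lem:stab_tree} (via \autoref{thm:centre_dihedral}) identifies the face as a coset of $A_{cd}$ for an edge $cd$ of $\Gamma$ meeting $\Gamma_{a,\mathrm{odd}}$; send it to the corresponding vertex $v_{cd}$ of $\Gamma_P$. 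Incidence of simplices translates to incidence in $\Gamma_P$, so $\theta$ is simplicial, visibly $N(P)$-invariant, and surjective.

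\emph{$Y_P$ is a copy of $\Gamma_P$.} Here one checks that $\theta$ restricts to a graph isomorphism $Y_P\xrightarrow{\ \sim\ }\Gamma_P$. By construction $Y_P=\bigcup_{e\subset\Gamma_P}g_v\widetilde e$ and $\theta(g_v\widetilde e)=e$, so the point is that the translating elements $g_v$ — assembled along the spanning tree $\tau$ from the Garside-element cocycle $\gamma\mapsto g_\gamma$, together with the bipartite convention for the edges of $\Gamma_P$ outside $\tau$ — make the edges $g_v\widetilde e$ glue along \emph{precisely} the incidences of $\Gamma_P$, neither more nor fewer; the "no fewer" part is where one uses that the endpoints of $\widetilde e$ have the predicted stabilisers so that $g_v\widetilde e$ and $g_{v'}\widetilde{e'}$ share a vertex exactly when $e,e'$ do. Granting this, $\theta|_{Y_P}$ is a bijection on vertices and on edges, hence an isomorphism, and $Y_P$ is a connected subtree meeting every $\theta$-fibre exactly once.

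\emph{Type determines orbit, and the main obstacle.} The remaining and genuinely substantial step is: if $\theta(\sigma)=\theta(\sigma')$ for simplices of $T_P$, then $\sigma$ and $\sigma'$ are $N(P)$-equivalent. I would prove this by induction on $|S|$: by \autoref{lem:link_fix} the statement at a vertex of $T_P$ reduces, after passing to its link, to the same statement for a smaller Artin complex, and ultimately to the dihedral base case. In that base case one must describe the $A_{cd}$-orbits of the vertices and edges of $\Fix_{X_{cd}}(Q)$ for $Q$ a cyclic parabolic generated by a conjugate of a generator, which is controlled by the normaliser of a cyclic parabolic inside the dihedral Artin group $A_{cd}$ — so again by \autoref{thm:centre_dihedral} and standard facts about dihedral Artin groups — and the element of $N(P)$ realising a required translation is exactly a product of Garside elements of the form defining $g_\gamma$, which is why those elements are built the way they are. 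This is the hard part: it is tantamount to showing that $T_P$ is neither bigger nor smaller than the Bass--Serre tree of the natural graph of groups over $\Gamma_P$, i.e.\ to re-deriving in the present language the structure of the standard trees of \citep[Section~4]{MP1}; indeed, as noted in the remark preceding this proof, $T_P$ is $N(P)$-equivariantly isomorphic to the standard tree of $P$, so one may alternatively transport the statement from the corresponding discussion there.
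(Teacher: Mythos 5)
The decisive step---that two edges of $T_P$ of the same type lie in the same $N(P)$-orbit---is exactly where your argument stops short. You flag it as ``the genuinely substantial step'' and propose either an induction on $|S|$ through links down to the dihedral case, or an appeal to the remark that $T_P$ is $N(P)$-equivariantly the standard tree of \citep{MP1}; neither is carried out, and the second rests on a statement the paper explicitly leaves unproved. The point you are missing is that this step is immediate from \autoref{LemmaNorm}: an edge of $T_P$ is a pair consisting of a maximal simplex of $\Fix(P)$ and a non-free codimension~$1$ face, and $A_S$ acts transitively on such pairs of a fixed type because the quotient of $X_S$ is a single simplex. Any $g\in A_S$ carrying one such pair to another in particular carries a maximal simplex of $\Fix(P)$ to a maximal simplex of $\Fix(P)$, hence lies in $N(P)$ by the second assertion of \autoref{LemmaNorm}. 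So for edges of $T_P$, ``same $A_S$-orbit'' already implies ``same $N(P)$-orbit'', and the orbit classification reduces to bookkeeping of types (the type~$1$ vertex must have type $c\in V(\Gamma_{a,\mathrm{odd}})$ by Paris's conjugacy result), with the local structure at type~$2$ vertices supplied by \autoref{lem:stab_tree}. Your proposed induction would re-derive all of this from scratch; as written, the core of the lemma remains unproved.

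Separately, your intermediate claim that $\theta|_{Y_P}\colon Y_P\to\Gamma_P$ is ``a bijection on vertices and on edges, hence an isomorphism'' is false in general: $Y_P$ is a subtree of $T_P$, whereas $\Gamma_P$ can have nontrivial fundamental group (see the all-$3$-labels triangle in Figure~1, where the basis of $F$ contains the element $\delta_{ab}\delta_{bc}\delta_{ac}$ coming from a loop of $\Gamma_P$). A tree with $E$ edges has $E+1$ vertices, while $\Gamma_P$ has $E+1-\operatorname{rank}\pi_1(\Gamma_P)$ of them, so $Y_P$ necessarily contains distinct vertices in the same $N(P)$-orbit whenever $\pi_1(\Gamma_P)$ is nontrivial---indeed these identifications are precisely what produce the $\pi_1(\Gamma_P)$-part of the basis in the subsequent discussion. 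What is true, and what ``fundamental domain'' means here, is that $Y_P$ is a connected subtree containing exactly one edge from each $N(P)$-orbit; the identification of the quotient with $\Gamma_P$ is then read off from the local analysis at each vertex, not from a graph isomorphism with $Y_P$.
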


\begin{proof}
	An edge of $T_P$ corresponds to a pair consisting of a maximal simplex of $T_P$ (of type $c$ for some $c\in V(\Gamma)$) and one of its codimension 1 faces (of type $cd$ for some $d\in V(\Gamma)$ adjacent to~$c$). We thus mention the following useful fact, which is an immediate consequence of \autoref{LemmaNorm}:
	
	\medskip

	\textbf{Fact:} Two edges of $T_P$ in the same $A_S$-orbit are  also in the same $N(P)$-orbit. 
	
	\medskip
	
	\noindent  Let us first show that $Y_P$ is a fundamental domain for the action of $N(P)$ (and hence $N(P)/P$) on $T_P$. The fact that $Y_P$ is connected, hence a subtree of $T_P$, is a consequence of the construction. By construction of the various edges $\widetilde{e}$, it thus follows that the edges of $Y_P$ are in different $A_S$-orbits, and in particular in different $N(P)$-orbits. Now let $e$ be an edge of $T_P$. Its type 1 vertex is of type $c$, for some $c \in V(\Gamma)$ such that $\langle c \rangle$ and $\langle a \rangle$ are conjugated. It thus follows from \citep[]{Paris} that $c \in V(\Gamma_{a, odd})$, and it then follows that $e$ is in the $A_S$-orbit, hence the $N(P)$-orbit, of an edge of $Y_P$. Thus, $Y_P$ is a fundamental domain for the action of $N(P)$ (and hence $N(P)/P$) on~$T_P$.
	
    \bigskip 
    
     We now want to study the quotient space $T_P/N(P)$. Let us analyse the action of $N(P)/P$ on $T_P$ at a local level. 
	
	 \noindent Let $v$ be a vertex of $T_P$ of type $c \in V(\Gamma)$. By the above remark, we will assume up to to the action of $N(P)$ that this vertex corresponds to the  codimension 1 simplex of $X_S$ corresponding to $g_vA_c$. By construction of $T_P$, the codimension 1 faces of $\Delta$ that correspond to a type 2 vertex of $T_P$ adjacent to $v$ are the simplices corresponding to the parabolic subgroups $g_vA_{cd}$ with $d$ connected to $c$ in $\Gamma$. 
	
	\noindent  Let $v$ be a vertex of $T_P$ of type $\{c,d\}$ where $c, d$ span an edge of $\Gamma$. Up to the action of $N(P)$, we will assume that this vertex corresponds to the simplex with associated coset $g_vA_{cd}$.  Then it follows from  \autoref{lem:stab_tree} that we have: 
	\begin{itemize}
		\item If $m_{cd}$ is even, then  all the edges of $T_P$ containing $v$ are in the same $\langle \delta_{cd} \rangle$-orbit.
		\item If $m_{cd}$ is odd, then there are exactly two $N(P)$-orbits of edges of $T_P$ containing $v$,  corresponding to the $\langle \delta_{cd}^2\rangle$-orbits  of the maximal simplices of type  $\{c\}$ and $\{d\}$ respectively. 
%		(We can explicitly describe representatives: Without loss of generality, let us assume that the combinatorial geodesic of $\tau$ from $v_a$ to $v$ contains $v_c$ but not $v_d$. It then follows that the vertices of $T_P$ corresponding to the simplices with associated cosets $g_vA_c$ and $g_v\Delta_{cd}A_d$  are adjacent to $v$ and in different $N(P)$-orbits.)
	\end{itemize}
The description of the quotient $T_P/N(P)$ now follows from this local description.
\end{proof}

As mentioned earlier, the fundamental group $N(P)/P$ of this graph of groups over $\Gamma_P$ is a free group, and by Bass-Serre theory a basis for it is obtained by choosing a generator of each (infinite cyclic) stabiliser of vertex of dihedral type, as well as a family of elements corresponding to a basis of the fundamental group of $\Gamma_P$. We now explain how to construct explicitly these elements.

%\begin{center}
%	\begin{figure}\label{fig:free_basis}
%		\includegraphics[width=\textwidth]{Figure1.png}
%		\caption{Examples of computations of normalisers of the parabolic subgroup $P=\langle a \rangle$, for various large-type triangular Artin groups. Type 2 vertices of $\Gamma_P$ are indicated in bold in the second column and come with their infinite cyclic stabilisers. The group element in blue corresponds to the element of a basis of $F$ coming from the fundamental group of $\Gamma_P$. 
%			Note that the structure of the normaliser for large-type triangular Artin groups depends only on the parity of the labels and not on the labels themselves, so the above cases cover all possible cases. }
%	\end{figure}
%\end{center}

\begin{itemize}
	\item[1)] For each vertex $v$ of $Y_P$ of type $\{c, d\}$, a generator of $$\Stab_{N(P)/P}(v) = g_vZ(A_{cd})g_v^{-1}$$ is given by 
	$$\left\{
	\begin{array}{ll}
		g_{v}\cdot \delta_{cd}^{2}\cdot g_{v}^{-1} & \mbox{if  } m_{cd} \mbox{ is odd,} \\
		g_{v}\cdot \delta_{cd}\cdot g_{v}^{-1} & \mbox{otherwise.}
	\end{array}
	\right.$$
%	
%	$$g_{cd}\Delta_{cd}^{k_{cd}}g_{cd}^{-1},$$ 
%	with $k_{cd}=1$ if $m_{cd}$ is even and $k_{cd}=2$ otherwise.
	\item[2)] A basis of $\pi_1(\Gamma_P)$ is in bijection with the edges of $\Gamma_P - \tau$. Let $e$ be such an edge, joining a type 1 vertex $v_c$ and a type 2 vertex $v_{cd}$, and let $e'$ be the edge joining $v_d$ and $v_{cd}$. Then 
%	Let $e'$ be an edge of $\tau$ containing the vertex $v_{cd}$. Then 
the edges $g_{v_c}\delta_{cd}^{\pm 1}\widetilde{e}$ and $g_{v_{d}}\widetilde{e}'$ of $Y_P$ contain two type 2 vertices in the same $N(P)$-orbit, and the geodesic of $Y_P$ between these two vertices project to a loop of $\Gamma_P$ crossing $e$ exactly once that represents the element $$g_{v_{c}}\cdot \delta_{cd}^{\pm 1}\cdot g_{v_{d}}^{-1}\in N(P).$$
Note that this element is of the form $g_\gamma$, for some combinatorial $\gamma$ containing $e$. 
%Choose an orientation of $\Gamma$. For each oriented loop  of $\Gamma_P$ based at $v_a$, we denote by $e_1, \ldots, e_n$ the oriented sequences of edges crossed by $\gamma$, and we define 
%$$g_\gamma \coloneqq \Delta_{e_1}^{\pm1}\cdots \Delta_{e_n}^{\pm1},$$
%where the sign for each Garside element $\Delta_{e_i}$ depends on whether $\gamma$ follows the orientation of $e_i$. 
Thus, a family of elements for item 2) is given by the family of elements $g_\gamma$ when $\gamma$ runs over a basis of $\Gamma_P$.
\end{itemize}

We thus get the following:

\begin{corollary}\label{prop:basis_free}
	The normaliser $N(P)$ splits as a direct product  $N(P)=P\times F$, where $F$ is a finitely-generated free group with a basis given by the following family of elements: 
	\begin{itemize}
		\item for every vertex $v$ of $\Gamma_P$ of dihedral type $\{c,d\}$, the element $$\left\{
		\begin{array}{ll}
		g_{v}\cdot \delta_{cd}^{2}\cdot g_{v}^{-1} & \mbox{if  } m_{cd} \mbox{ is odd,} \\
		g_{v}\cdot \delta_{cd}\cdot g_{v}^{-1} & \mbox{otherwise.}
		\end{array}
		\right.$$
		\item for each combinatorial loop $\gamma$ based at $v_a$ in a chosen basis of $\Gamma_P$, the element $g_\gamma.$\qed
	\end{itemize}
\end{corollary}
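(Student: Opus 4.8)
The plan is to assemble \autoref{prop:basis_free} directly from the Bass–Serre-theoretic data already established, namely the action of $N(P)/P$ on the tree $T_P$, the identification of $Y_P$ as a fundamental domain, the isomorphism $T_P/N(P) \cong \Gamma_P$, and the local description of vertex stabilisers. By \autoref{lem:norm_split} we already know $N(P) = P \times F$ with $F \cong N(P)/P$ a finitely generated free group, so the only task is to exhibit an explicit basis. Since $N(P)/P$ acts on the tree $T_P$ with quotient the graph $\Gamma_P$, with trivial stabilisers at type 1 vertices and at edges, and with infinite cyclic stabilisers $g_v Z(A_{cd}) g_v^{-1}$ at each type 2 vertex $v$ of dihedral type $\{c,d\}$ (\autoref{lem:stab_tree}), this is precisely the fundamental group of a graph of groups over $\Gamma_P$ whose vertex groups are trivial except at dihedral-type vertices, where they are infinite cyclic, and whose edge groups are all trivial.

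First I would invoke the standard structure theorem for fundamental groups of graphs of groups (see \citep[Chapter~II.12]{BH} or Serre's book): given a graph of groups over a connected graph with a chosen spanning tree $\tau$, the fundamental group is generated by the vertex groups together with one stable letter per edge outside $\tau$, subject to the usual relations; when all edge groups are trivial this is simply the free product of the vertex groups with a free group of rank equal to the first Betti number of the underlying graph. Applied here, this gives $F$ as the free product of the cyclic groups $\Stab_{N(P)/P}(v)$, one for each type 2 vertex $v$ of $\Gamma_P$, amalgamated freely with $\pi_1(\Gamma_P)$; since each factor is infinite cyclic or free, $F$ is free on the union of a generator of each $\Stab_{N(P)/P}(v)$ and a free basis of $\pi_1(\Gamma_P)$.

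Second I would translate each abstract generator into the explicit element claimed. For the vertex-group generators, the computation of $\Stab_{N(P)/P}(v)$ in \autoref{lem:stab_tree}, combined with the description of the Garside element and \autoref{thm:centre_dihedral}, yields directly $g_v \delta_{cd}^2 g_v^{-1}$ (if $m_{cd}$ odd) or $g_v \delta_{cd} g_v^{-1}$ (if $m_{cd}$ even). For the $\pi_1$-generators, I would use the description of $Y_P$ and the fact that each edge $e$ of $\Gamma_P - \tau$ closes up to a loop $\gamma$ based at $v_a$ crossing $e$ exactly once; tracing the corresponding path in $T_P$ between the two $N(P)$-equivalent lifts of its endpoints (as done in item 2) before the corollary), the deck transformation carrying one lift to the other is exactly the product $g_\gamma = \delta_{e_1}^{\pm 1}\cdots \delta_{e_n}^{\pm 1}$ of Garside elements along the edges crossed by $\gamma$, with signs dictated by orientations. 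Finally one notes that these elements, lying in $N(P)$, indeed commute with $P$ (they fix $\Fix(P)$ pointwise on the relevant simplices by construction), so they genuinely give a basis of the free factor $F$ in $N(P) = P \times F$.

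The main obstacle is bookkeeping rather than conceptual: one must be careful that the fundamental-domain edges $g_v \widetilde e$ are pairwise in distinct $N(P)$-orbits (so that $Y_P$ really is a strict fundamental domain and no relations are hidden), and that the two $N(P)$-translates appearing in the loop construction for a non-tree edge are correctly identified — this is where the parity of $m_{cd}$ enters, since for odd $m_{cd}$ a type 2 vertex has two edge-orbits and the stable letter must be chosen to interchange the appropriate pair. These points are exactly what the preceding lemma (on $Y_P$ and $T_P/N(P)$) and the local analysis via \autoref{lem:stab_tree} have set up, so the corollary follows by citing them and performing the translation above.
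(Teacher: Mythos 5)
Your proposal is correct and follows essentially the same route as the paper: the corollary is deduced by applying Bass--Serre theory to the graph of groups over $\Gamma_P$ established in the preceding lemma (trivial edge and type 1 vertex groups, infinite cyclic stabilisers at dihedral-type vertices computed via \autoref{lem:stab_tree}), and then identifying the abstract generators with the explicit elements $g_v\delta_{cd}^{\pm}g_v^{-1}$ and $g_\gamma$ exactly as in items 1) and 2) before the statement. The paper treats the corollary as an immediate consequence of that discussion, which is precisely what you reconstruct.
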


In Figure 1, we give examples for various Artin groups associated to a triangular Coxeter graph of the normalisers of standard generators.

\section{Conjugacy stability and root stability}\label{section6}

We are now ready to prove \myref{conjugacystability}{Theorem} and \myref{roots}{Theorem}. In this section, $A_S$ denotes as usual an Artin group of large type on at least three generators.

By  \autoref{cor:parabolic_closure}, we can define the following subgroups of $A_S$:

\begin{definition}
Let $g \in A_S$. The minimal parabolic subgroup~$P_g$ containing~$g$ is called the \textbf{parabolic closure} of~$g$. 
\end{definition}

This subgroup behaves well under conjugacy as illustrated by the following result (which generalises an analogous statement for spherical Artin groups \citep[Lemma~8.1]{CGGW}):

\begin{lemma}\label{proposition:conjugacy_Pg}
%	Lat $A_S$ be any Artin group. If for every $g\in A_S$ there is a minimal parabolic subgroup~$P_g$ (with respect to the inclusion) containing $g$, 
 Let $g \in A_S$ and $\alpha \in A_S$. Then $$P_{\alpha^{-1} g \alpha} = \alpha^{-1} P_g \alpha.$$
	
	In particular, if $a$ and $b$ are conjugate, their parabolic closures correspond to stabilisers of simplices of $X_S$ with the same dimension.
\end{lemma}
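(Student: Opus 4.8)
The plan is to prove the identity $P_{\alpha^{-1}g\alpha} = \alpha^{-1}P_g\alpha$ directly from the defining property of the parabolic closure as the \emph{minimal} parabolic subgroup containing a given element, and then deduce the statement about dimensions from \autoref{theorem:intersection} together with the geometric description of parabolic subgroups as simplex stabilisers (\autoref{lemma:geometric_parabolics}).

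First I would record the two basic facts that make the argument go through: (i) for any $\alpha \in A_S$, conjugation $H \mapsto \alpha^{-1}H\alpha$ is a bijection from the set of parabolic subgroups of $A_S$ to itself (this is immediate from the definition, since a parabolic subgroup is by definition a conjugate of a standard parabolic subgroup), and it is order-preserving for inclusion; (ii) $g \in H$ if and only if $\alpha^{-1}g\alpha \in \alpha^{-1}H\alpha$. Now for the main identity: since $g \in P_g$, we get $\alpha^{-1}g\alpha \in \alpha^{-1}P_g\alpha$, and $\alpha^{-1}P_g\alpha$ is a parabolic subgroup by (i); hence by minimality of the parabolic closure, $P_{\alpha^{-1}g\alpha} \subseteq \alpha^{-1}P_g\alpha$. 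Applying the same reasoning with $\alpha^{-1}g\alpha$ in place of $g$ and $\alpha^{-1}$ in place of $\alpha$ gives $P_g = P_{\alpha(\alpha^{-1}g\alpha)\alpha^{-1}} \subseteq \alpha P_{\alpha^{-1}g\alpha}\alpha^{-1}$, i.e.\ $\alpha^{-1}P_g\alpha \subseteq P_{\alpha^{-1}g\alpha}$. Combining the two inclusions yields the equality. Note this step uses only \autoref{cor:parabolic_closure} (to know $P_g$ is well-defined) and formal properties of conjugation — there is no real obstacle here.

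For the second assertion, suppose $a$ and $b$ are conjugate, say $b = \alpha^{-1}a\alpha$. By the first part, $P_b = \alpha^{-1}P_a\alpha$. By \autoref{lemma:geometric_parabolics}, write $P_a = \Stab_{X_S}(\Delta)$ for some simplex $\Delta$ of $X_S$; then $P_b = \alpha^{-1}\Stab_{X_S}(\Delta)\alpha = \Stab_{X_S}(\alpha^{-1}\cdot\Delta)$. Since $\alpha$ acts on $X_S$ simplicially, $\alpha^{-1}\cdot\Delta$ has the same dimension as $\Delta$. Thus $P_a$ and $P_b$ are stabilisers of simplices of equal dimension. The only subtlety worth flagging is that a parabolic subgroup may be the stabiliser of several simplices of \emph{different} types/dimensions (cf.\ the remark after \autoref{lemma:geometric_parabolics}), so strictly speaking one should either fix a canonical choice of $\Delta$ for $P_a$, or simply phrase the conclusion as ``there exist simplices $\Delta_a, \Delta_b$ of the same dimension with $P_a = \Stab(\Delta_a)$, $P_b = \Stab(\Delta_b)$'', which is exactly what the equality $P_b = \alpha^{-1}P_a\alpha$ delivers once one simplex for $P_a$ is chosen and transported by $\alpha^{-1}$. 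This is the only point requiring a moment's care; the rest is routine.
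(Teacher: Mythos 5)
Your proof is correct and follows essentially the same route as the paper's: both arguments are purely formal consequences of the minimality defining the parabolic closure together with the fact that conjugation permutes the set of parabolic subgroups (the paper verifies minimality of $\alpha^{-1}P_g\alpha$ directly against an arbitrary parabolic $Q \ni \alpha^{-1}g\alpha$, whereas you obtain the two inclusions by applying minimality twice with the roles of $g$ and $\alpha^{-1}g\alpha$ swapped — an immaterial difference). Your careful handling of the ``in particular'' clause, including the caveat that a parabolic may stabilise simplices of different dimensions, is a welcome addition the paper leaves implicit.
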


\begin{proof}
	It is obvious that $\alpha^{-1} P_g \alpha$ contains $\alpha^{-1} g \alpha$. We need to prove that this parabolic subgroup is the minimal one containing $\alpha^{-1} g \alpha$.    Let $Q$ be any parabolic subgroup containing $\alpha^{-1} g \alpha$. As $\alpha Q \alpha^{-1}$ contains $g$, $P_g \subseteq \alpha Q \alpha^{-1}$.  Therefore, $ \alpha^{-1} P_g \alpha \subseteq Q$.
\end{proof}

\noindent We are finally able to prove the conjugacy stability theorem:

\begin{proof}[Proof of \myref{conjugacystability}{Theorem}]
	Let $g$ and $g'$ be two elements of $A_X$ that are conjugated by an element $\alpha\in A_S$.  As $P_g, P_{g'}\subset A_X$, by \autoref{theorem:intersection} there must be $Y,Y'\in X$ and $\beta,\beta'\in A_X$ such that $P_g=\beta^{-1}A_Y\beta$ and $P_g=\beta'^{-1}A_{Y'}\beta'$. Since~$P_g$ and~$P_g'$ are conjugate by  \autoref{proposition:conjugacy_Pg}, $A_Y$ and~$A_{Y'}$ have to be conjugate. At the beginning of this section, we have seen that if $|Y|>1$, then $Y=Y'$. Also, if $|Y|=1$, then either $Y=Y'$, or $Y$ and $Y'$ are single generators connected by an odd-labelled path in $\Gamma_S$.  Thus, there are two possibilities:
	
	\begin{itemize}
		
		\item Suppose that $P_g=\beta^{-1}A_Y\beta$ and $P_{g'}=\beta'^{-1}A_Y\beta'$, with $Y\subseteq X$ and $\beta,\beta'\in A_X$. Then $(\beta\alpha)^{-1}A_Y(\beta\alpha)=\beta'^{-1}A_Y\beta'$ and $\beta\alpha\beta'^{-1}$ normalises~$A_Y$. If the dimension of $A_Y$ is bigger than 1, then by \autoref{cor:stab_notcyclic}, $N(A_Y)=A_Y\subseteq A_X$, so $\alpha\in A_X$. If  the dimension of~$A_Y$ is~1, $g=\beta^{-1} a \beta, g'=\beta'^{-1} a\beta'$, for some $a\in X$, and they are conjugate by $\beta^{-1}\beta'\in A_X$.

		\item Suppose that $g=\gamma^{-1}a^n\gamma$ and $g'=\gamma'^{-1}b^n\gamma'$, $\gamma,\gamma'\in A_X $,  where $a$ and $b$ are Artin generators that are connected in $\Gamma_S$ by an odd-labelled path. Then, there is an element of $A_S$ conjugating~$a$ to~$b$. If there is an odd-labelled path in~$\Gamma_X$ connecting $a$ to $b$, then there is an element $c$ in $A_X$ that conjugates $a$ to $b$. Thus, $\gamma^{-1}c\gamma'$ conjugates $g$ to $g'$ . 
		
		On the contrary, if there is no such a path in $\Gamma_X$, there is no element in $A_X$ conjugating~$a$ to $b$. Since the parabolic closures of~$g$ and $g'$ are respectively $\gamma^{-1} \langle a \rangle\gamma$ and $\gamma'^{-1} \langle b \rangle\gamma'$, by \autoref{proposition:conjugacy_Pg}  there is no element in $A_X$ conjugating $g$ to $g'$. This is then the only case in which $A_X$ is not conjugacy stable in $A_S$.
	\end{itemize}
\end{proof}	
	
\noindent	
We also prove that the parabolic closure of an element $g$ is stable when taking roots and powers of $g$. This is a generalisation of \citep[Corollary~8.3]{CGGW}.
	
	\begin{proposition} \label{PropositionPowerStability} Let $A_S$ be a large-type Artin group of rank at least $2$, and let $g \in A_S$. Then for every $n \in \mathbf{Z} \backslash \{0\}$ we have $P_g = P_{g^n}$.
	\end{proposition}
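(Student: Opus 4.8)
\emph{Proof proposal.} The plan is to prove the two inclusions $P_{g^n}\subseteq P_g$ and $P_g\subseteq P_{g^n}$ separately. One of them is immediate: since $g\in P_g$ we have $g^n\in P_g$, and as $P_{g^n}$ is by definition the \emph{smallest} parabolic subgroup containing $g^n$, we get $P_{g^n}\subseteq P_g$ for free. (Both parabolic closures exist: for $|S|\geq 3$ this is \autoref{cor:parabolic_closure}, and for $|S|=2$ the group $A_S$ is dihedral, hence spherical, and the whole statement $P_g=P_{g^n}$ is \citep[Corollary~8.3]{CGGW}; so from now on we may assume $|S|\geq 3$.)

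For the reverse inclusion $P_g\subseteq P_{g^n}$ it suffices to show $g\in P_{g^n}$, and the key observation is that $g$ normalises $P:=P_{g^n}$. Indeed, applying \autoref{proposition:conjugacy_Pg} to the element $g^n$ with conjugating element $g$ gives $P = P_{g^{-1}g^{n}g}= g^{-1}P_{g^n}g = g^{-1}Pg$, so $g\in N(P)$. Now I would split according to the type $S'$ of $P$. If $P=A_S$ there is nothing to prove. If $|S'|\geq 2$, then $N(P)=P$ by \autoref{cor:stab_notcyclic} (see also \myref{thm:main_normalizer}{Theorem}), so $g\in P$. If $|S'|=1$, then by \myref{thm:main_normalizer}{Theorem} the normaliser splits as a direct product $N(P)=P\times F$ with $F$ a free group; writing $g=pf$ with $p\in P$ and $f\in F$, and using that $P$ and $F$ commute, we get $g^n=p^n f^n$. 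Since $g^n\in P$, uniqueness of this decomposition forces $f^n=1$, and since $F$ is free, hence torsion-free, and $n\neq 0$, we conclude $f=1$, i.e.\ $g=p\in P$. In every case $g\in P_{g^n}$, hence $P_g\subseteq P_{g^n}$, and together with the first inclusion this yields $P_g=P_{g^n}$.

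The argument is short once the normaliser computations of \autoref{section5} are in hand, and there is no real obstacle; the only step requiring a bit of care is the cyclic case $|S'|=1$, where one must genuinely use the \emph{direct-product} structure of $N(P)$ together with the torsion-freeness of its free factor, rather than just the fact that $g$ lies in the normaliser. One should also take care not to overlook the degenerate cases, namely $P_{g^n}=A_S$ and the rank-two base case, which are disposed of trivially or by \citep{CGGW} respectively.
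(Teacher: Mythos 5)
Your proof is correct, but it takes a genuinely different route from the paper. The paper argues by induction on $|S|$: assuming $P_{g^n}\subsetneq P_g$, it first shows $P_g$ is a \emph{proper} parabolic by a systolic fixed-point argument (since $g^n$ is elliptic, $g$ has finite orbits, hence stabilises --- and by absence of inversions fixes --- a simplex, via \autoref{LemmaExistsInvariantSimplex}), and then conjugates $P_g$ to a standard parabolic $A_{S'}$ with $|S'|<|S|$ and applies the induction hypothesis there, with the dihedral case delegated to \citep[Corollary~8.3]{CGGW}. You instead observe that $g$ normalises $P_{g^n}$ (a clean application of \autoref{proposition:conjugacy_Pg}) and then read off $g\in P_{g^n}$ from the normaliser computations: $N(P)=P$ when the type has size at least $2$, and the direct-product structure $N(P)=P\times F$ together with torsion-freeness of $F$ in the cyclic case. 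Your argument is shorter and avoids the induction on rank and the fixed-point lemma entirely, at the cost of invoking the full strength of \myref{thm:main_normalizer}{Theorem}; the paper's argument is self-contained given only the intersection results of \autoref{section4} plus \autoref{LemmaExistsInvariantSimplex}. There is no circularity in your approach, since \autoref{section5} does not depend on this proposition.

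One small omission: your case analysis on the type $S'$ of $P_{g^n}$ skips $|S'|=0$, i.e.\ the possibility that $P_{g^n}$ is the trivial parabolic subgroup, which occurs exactly when $g^n=1$. There $N(P)=A_S$ and your normaliser argument gives nothing. This case is degenerate --- large-type Artin groups are torsion-free (they are two-dimensional, so the $K(\pi,1)$ result of Charney--Davis gives a finite-dimensional classifying space; alternatively one can rerun the paper's elliptic-element argument), so $g^n=1$ with $n\neq 0$ forces $g=1$ and $P_g=P_{g^n}=\{1\}$ --- but it should be stated, since torsion-freeness is not otherwise invoked in the paper.
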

	
Before coming to the proof of this proposition, we first introduce the following Lemma. Note that this result and its proof are analogous to \citep[Theorem 7.3]{chepoidismantlability}:

\begin{lemma} \label{LemmaExistsInvariantSimplex} Let $G$ be a group acting by simplicial automorphisms on a systolic complex $X$. Suppose that there is a vertex $v \in X$ whose orbit $G v$ is finite. Then there exists a simplex of $X$ that is invariant under the action of $G$.
\end{lemma}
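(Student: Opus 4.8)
The plan is to produce the invariant simplex by a minimal-displacement / barycentric argument, exploiting the fact that systolic complexes are ``weakly modular'' enough for balls and intervals to behave well. Since $Gv$ is a finite set of vertices, I first want to replace it by a single canonical invariant simplex. The natural candidate is the set of vertices realising the minimal value of the function $w \mapsto \max_{u \in Gv} d(w,u)$ (a combinatorial circumcentre), or, more robustly for systolic complexes, an iterated construction: take the finite set $Gv$, and use the fact (from \citep{JS}) that in a systolic complex any finite set is contained in a unique minimal ``convex hull''-like object, or that the intersection of the combinatorial balls $B(u,r)$ for $u\in Gv$ and $r$ the minimal radius for which this intersection is non-empty is a single simplex. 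This last assertion is exactly the kind of statement proved for $B(v',n)\cap B(v,1)$ inside the proof of \autoref{lemma:fixing_geodesic}, and it generalises: in a systolic complex a non-empty intersection of combinatorial balls of ``minimal total radius'' is a simplex, because balls are convex (\citep[Corollary~7.5]{JS}) and convex subsets that are ``adjacent'' meet in a simplex.

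Concretely, here is the order of steps I would carry out. First, define $r$ to be the smallest integer such that $\bigcap_{u\in Gv} B(u,r) \neq \emptyset$; this is well-defined since $X$ is connected and $Gv$ is finite. Second, set $C \coloneqq \bigcap_{u\in Gv} B(u,r)$; since each $B(u,r)$ is $G$-translated among the $B(u',r)$ by the action (as $G$ permutes $Gv$), the intersection $C$ is $G$-invariant as a subcomplex. Third — the crucial geometric step — show that $C$ is a single simplex. For this I would argue that $C$ is convex (intersection of convex balls, using \citep[Corollary~7.5]{JS} and \citep[Lemma~7.2]{JS}-type stability of convexity under intersection), and that by minimality of $r$, $C$ cannot contain two vertices at distance $\geq 2$: if $x,y\in C$ with $d(x,y)\ge 2$, one uses a point ``between'' them (a vertex on a geodesic, or the argument of \citep[Lemma~7.7]{JS} that a convex set adjacent to a ball meets it in a simplex) to produce a point lying in all the smaller balls $B(u,r-1)$, contradicting minimality. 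Hence $C$ is a clique, i.e. spans a simplex; being full (convex) it \emph{is} that simplex. Finally, $C$ is $G$-invariant and $G$ acts without inversions is not even needed — invariance as a set of a simplex is exactly the conclusion.

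Alternatively, and perhaps more cleanly, I could invoke the known fixed-point property: finite-orbit actions on systolic complexes have a fixed simplex, which is Chepoi--Osajda-type result; indeed the remark in the excerpt points to \citep[Theorem~7.3]{chepoidismantlability} as the analogue, so the proof will mirror that: systolic complexes are collapsible/dismantlable in a $G$-equivariant way, and a dismantlable complex with a finite $G$-orbit admits an invariant simplex by a standard retraction argument. If I take that route, the steps are: (1) recall that systolic complexes are dismantlable (this follows from \citep{JS} via local $6$-largeness, or cite \citep{chepoidismantlability} directly); (2) run the equivariant dismantling on the finite subcomplex generated (in the combinatorial-ball sense) by $Gv$, collapsing dominated vertices $G$-equivariantly until a $G$-invariant simplex remains; (3) conclude.

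The main obstacle I anticipate is establishing cleanly that the minimal-radius ball intersection $C$ is a single simplex rather than a larger convex complex — this is where all the systolic input (convexity of balls, the ``ball meets convex set in a simplex'' lemma, and the extremality forced by minimality of $r$) has to be combined correctly, and one must be careful that ``minimal $r$'' is taken over the common radius rather than a vector of radii, or else handle the vector-valued version. A secondary subtlety is ensuring $G$-invariance survives: the collection $\{B(u,r): u\in Gv\}$ is permuted by $G$, so its intersection is genuinely $G$-invariant, but one should note that different $u$'s could give the same ball, which causes no problem. Everything else — connectedness giving finiteness of $r$, convexity of the intersection, and reading off that a full clique is a simplex — is routine given the results already cited from \citep{JS}.
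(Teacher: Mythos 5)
Your ``alternative'' route is in fact the paper's entire proof: the paper simply quotes \citep[Theorem~7.3]{chepoidismantlability} and observes that the finiteness of $G$ is used there only to produce a finite orbit, so the argument applies verbatim to an infinite group with a finite orbit. If you take that route there is nothing missing.

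Your primary route, however, breaks at exactly the step you flag as crucial: in a systolic complex the set $C=\bigcap_{u\in Gv}B(u,r)$, for $r$ minimal with $C\neq\emptyset$, need \emph{not} be a simplex, and the claim ``by minimality of $r$, $C$ cannot contain two vertices at distance $\geq 2$'' is false. Take $X$ to be the equilateral triangulation of the Euclidean plane (flag, simply connected, every vertex link a $6$-cycle, hence systolic), with vertex set the lattice spanned by unit vectors $e_1,e_2$ at angle $60^\circ$, and let $G\cong\mathbf{Z}/3$ act by the order-$3$ rotation about the centroid of the face $\{e_1,\,e_2,\,e_1+e_2\}$. The orbit of the vertex $0$ is $\{0,\,2e_1,\,2e_2\}$, three vertices pairwise at distance $2$. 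One checks directly that no vertex is within distance $1$ of all three (the unique common neighbour of $0$ and $2e_1$ is $e_1$, which is at distance $2$ from $2e_2$), so $r=2$; but then $C$ contains the orbit itself, hence has diameter $2$ and is not a simplex --- even though an invariant simplex exists, namely $\{e_1,e_2,e_1+e_2\}$. The heuristic ``a point between $x$ and $y$ lies in all the smaller balls'' fails because convexity of balls only forces a geodesic between two points of $C$ to stay within distance $r$ of each centre, not within $r-1$; and \citep[Lemma~7.7]{JS} concerns the very special configuration of a ball $B(v',n)$ meeting $B(v,1)$ with $d(v,v')=n+1$, and does not generalise to minimal common-radius intersections of several balls. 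A correct shrinking procedure (replacing a finite invariant set by a canonically defined, strictly ``smaller'' one until a clique remains) is precisely the content of the Chepoi--Osajda argument, so you should simply run your second route and discard the circumcentre one.
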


\begin{proof} The statement of \citep[Theorem 7.3]{chepoidismantlability} is given for a finite group $G$. However, their proof only uses the finiteness of $G$ to obtain a finite $G$-orbit, out of which they construct an invariant simplex. In particular, their proof generalises without any change to the case of an infinite group $G$ with a finite $G$-orbit. 
%	Note that $G v$ is a finite $G$-invariant set of vertices in $X$. The conclusion now comes from proceeding as in the proof of \citep[Theorem 7.3]{chepoidismantlability}.
\end{proof}

We now come to the proof of \autoref{PropositionPowerStability}:

	\begin{proof}[Proof of \autoref{PropositionPowerStability}]
	We show by induction on $|S|$ that $P_g = P_{g^n}$. If $|S| = 2$, $A_S$ is a dihedral Artin group. In particular, it is spherical, and the result follows from \citep[Corollary~8.3]{CGGW}. Let now $|S| \geq 3$, and suppose that $P_g \neq P_{g^n}$. We have that $P_{g^n} \subseteq P_g$, and then there is a chain of inclusions of the form
	$$P_{g^n} \subsetneq P_g \subseteq A_S.$$

\noindent \emph{Claim.}  We have $P_g \subsetneq A_S$.

\medskip

 Indeed, since $P_{g^n} \subsetneq A_S$, the set $\mathrm{Fix}_{X_S}(P_{g^n})$ is non-empty. In particular, $g^n$ is elliptic, and thus $g$ has finite orbits, as for every point $v \in \mathrm{Fix}(g^n)$,
	$$\langle g \rangle \cdot v = \{v, g v, g^2 v, \cdots, g^{n-1} v \}.$$
	By \autoref{LemmaExistsInvariantSimplex}, $g$ must stabilise some simplex $\Delta$ in $X_{S}$. Because the action of $A_S$ on $X_S$ is without inversions, $g$ must fix $\Delta$ pointwise. In other words, $\mathrm{Fix}(g)$ is non-empty, hence $P_g \subsetneq A_S$. This finishes the proof of our claim. 
	
	\medskip
	
Also, we have $P_g = h A_{S'} h^{-1}$ for some $h \in A_S$ and $S' \subsetneq S$. Now notice that
	$$h^{-1} P_{g^n} h \subsetneq h^{-1} P_{g} h = A_{S'},$$
	and thus $P_{h^{-1} g^n h} \subsetneq P_{h^{-1} g h} = A_{S'}$ by \autoref{proposition:conjugacy_Pg}. 
	%In particular, the parabolic closures of $h^{-1} g h$ and $h^{-1} g^n h$ in $A_{S'}$ are the same as the ones in $A_S$.
As $|S'| < |S|$, we can use the induction hypothesis on $X_{S'}$. This yields $P_{h^{-1} g h} = P_{h^{-1} g^n h}$. In particular, one has $P_g = P_{g^n}$ by \autoref{proposition:conjugacy_Pg}, which is a contradiction.
	\end{proof}
	
	%It follows that we have a chain of the form
	%$$\emptyset \neq Fix(P_g) \subsetneq Fix(P_{g^n}).$$	
	%Using \autoref{PropFix}, we know that $Fix(P_{g^n})$ must be either a chamber complex or a single simplex. By \autoref{PropFix} again, $Fix(P_g)$ must be a single simplex $\Delta_g$. Because $g^n$ fixes $\Delta_g$ too, this simplex must be a sub-simplex of $Fix(P_{g^n})$. 

	%By \autoref{LinkDev}, the link $Lk_{X_S}(\Delta_g)$ is isomorphic to the Artin complex $X_{S'}$ associated with the Artin (sub)group $A_{S'}$. In particular,
	%$$Fix_{X_{S'}}(P_{g^n}) = Fix(P_{g^n}) \cap Lk_{X_S}(\Delta_g) \neq \emptyset.$$
	%This intersection is not empty, as $\Delta_g$ is a strict subsimplex of $Fix(P_{g^n})$. Equivalently
	%$$Fix_{X_{S'}}(h^{-1} \cdot P_{g^n} \cdot h) = Fix(h^{-1} \cdot P_{g^n} \cdot h) \cap Lk_{X_S}(\Delta_{S'}) \neq \emptyset,$$
	%and thus using \autoref{proposition:conjugacy_Pg} we get 
	%$$Fix_{X_{S'}}(P_{h^{-1} g^n h}) = Fix(P_{h^{-1} g^n h}) \cap Lk_{X_S}(\Delta_{S'}) \neq \emptyset. \ (*)$$

\noindent
As an immediate consequence, we have the following result:	
		
	\begin{corollary} Let $A_S$ be a large-type Artin group of rank at least $2$, and let $P$ be a  parabolic subgroup of $A_S$. If $g^n \in P$ for some $n \in \mathbf{Z} \backslash \{0\}$, then $g \in P$.	\qed
	\end{corollary}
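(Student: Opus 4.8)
The plan is to deduce this corollary directly from \autoref{PropositionPowerStability} together with the parabolic-closure machinery developed in \autoref{section4}. The key point is that membership $g \in P$ for a parabolic subgroup $P$ is equivalent to the inclusion of parabolic closures $P_g \subseteq P$: indeed, if $g \in P$ then $P$ is one of the parabolic subgroups containing $g$, so $P_g \subseteq P$ by minimality of the parabolic closure (\autoref{cor:parabolic_closure}); conversely, $g \in P_g \subseteq P$.

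With this reformulation in hand, the argument is short. First I would assume $g^n \in P$ for some $n \in \mathbf{Z}\setminus\{0\}$. By the equivalence above, this gives $P_{g^n} \subseteq P$. Next I would invoke \autoref{PropositionPowerStability}, which asserts $P_g = P_{g^n}$, to conclude $P_g \subseteq P$. Applying the equivalence once more yields $g \in P$, as desired. One should note that \autoref{PropositionPowerStability} is stated for $A_S$ of rank at least $2$, which matches the hypothesis of the corollary; the degenerate rank-$1$ case (where $A_S \cong \mathbf{Z}$) is trivial anyway since every subgroup of $\mathbf{Z}$ is already "parabolic" in the relevant sense, and a nonzero power landing in a subgroup $d\mathbf{Z}$ forces $d \mid $ the exponent appropriately, though this case is excluded by hypothesis.

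I do not anticipate any real obstacle here: the statement is a formal consequence of results already proved, and the only thing to be careful about is making the translation between "$g \in P$'' and "$P_g \subseteq P$'' explicit, since \autoref{PropositionPowerStability} is phrased in terms of parabolic closures of elements rather than membership in a fixed parabolic subgroup $P$. The whole proof fits in two or three lines once that bridge is stated.

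\begin{proof}
If $g^n\in P$, then $P$ is a parabolic subgroup containing $g^n$, so $P_{g^n}\subseteq P$ by definition of the parabolic closure. By \autoref{PropositionPowerStability} we have $P_g=P_{g^n}$, hence $g\in P_g=P_{g^n}\subseteq P$.
\end{proof}
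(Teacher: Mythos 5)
Your proof is correct and is exactly the argument the paper intends: the corollary is stated with a \qed as an ``immediate consequence'' of \autoref{PropositionPowerStability}, and the deduction is precisely the one you give ($g^n\in P$ forces $P_{g^n}\subseteq P$ by minimality of the parabolic closure, and $P_g=P_{g^n}$ then yields $g\in P$). The explicit bridge you state between ``$g\in P$'' and ``$P_g\subseteq P$'' is the right thing to make precise, and nothing further is needed.
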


\bigskip

\noindent{\textbf{\Large{Acknowledgments}}} 

\medskip

The authors were partially supported by the EPSRC New Investigator Award EP/S010963/1. The first author was partially supported by the research grants MTM2016-76453-C2-1-P (financed by the Spanish Ministry of Economy and FEDER) and US-1263032 (financed by the Andalusian Ministry of Economy and Knowledge, and the Operational Program FEDER 2014--2020).

\bibliography{Bib_Systole}

\bigskip\bigskip
\noindent
\textit{Mar\'{i}a Cumplido*,} \texttt{\href{mailto:maria.cumplido.cabello@gmail.com}{maria.cumplido.cabello@gmail.com}}.

\smallskip\noindent
\textit{Alexandre Martin*,} \texttt{\href{mailto:Alexandre.Martin@hw.ac.uk}{Alexandre.Martin@hw.ac.uk}}.

\smallskip \noindent
\textit{Nicolas Vaskou*,} \texttt{\href{mailto:ncv1@hw.ac.uk}{ncv1@hw.ac.uk }}.

\medskip \noindent
\textit{*Department of Mathematics, Heriot-Watt University, Riccarton, EH14 4AS Edinburgh, \\Scotland, UK.}

\end{document}